
\documentclass[10pt]{amsart}
\usepackage{amsmath,amssymb,latexsym,soul,cite,amsthm,color,enumitem,graphicx,tikz, mathtools,microtype}
\usepackage[colorlinks=true,urlcolor=armygreen,citecolor=armygreen,linkcolor=armygreen,linktocpage,pdfpagelabels,bookmarksnumbered,bookmarksopen]{hyperref}
\definecolor{armygreen}{rgb}{0.29, 0.33, 0.13}
\usepackage[english]{babel}
\usepackage[left=3.2cm,right=3.2cm,top=2.9cm,bottom=2.9cm]{geometry}

\numberwithin{equation}{section}

\newtheorem{theorem}{Theorem}[section]
\theoremstyle{plain}
\newtheorem{lemma}[theorem]{Lemma}
\theoremstyle{plain}
\newtheorem{proposition}[theorem]{Proposition}
\theoremstyle{plain}

\theoremstyle{definition}
\newtheorem{remark}[theorem]{Remark}
\newtheorem{example}[theorem]{Example}

\newcommand{\R}{{\mathbb R}}
\newcommand{\eps}{\varepsilon}
\newcommand{\s}{H^s_0(\Omega)}

\newcommand{\beq}{\begin{equation}}
\newcommand{\eeq}{\end{equation}}
\renewcommand{\le}{\leqslant}
\renewcommand{\ge}{\geqslant}
\newcommand{\fl}{(-\Delta)^{s\,}}

\newcommand{\restr}[2]{\left.#1\right|_{#2}}

\newenvironment{enumroman}{\begin{enumerate}

}{\end{enumerate}}

\title[Resonant fractional problems]{Existence and multiplicity results for resonant fractional boundary value problems}

\author[A.\ Iannizzotto, N.S.\ Papageorgiou]{Antonio Iannizzotto, Nikolaos S.\ Papageorgiou}

\address[A.\ Iannizzotto]{Department of Mathematics and Computer Science
\newline\indent
University of Cagliari
\newline\indent
Viale L. Merello 92, 09123 Cagliari, Italy}
\email{antonio.iannizzotto@unica.it}

\address[N.S.\ Papageorgiou]{Department of Mathematics
\newline\indent
National Technical University
\newline\indent
Zografou Campus, Athens 15780, Greece}
\email{npapg@math.ntua.gr}

\subjclass[2010]{35R11, 35P05, 49F15.}
\keywords{Fractional Laplacian, eigenvalue problems, Morse theory.}

\begin{document}

\begin{abstract}
We study a Dirichlet-type boundary value problem for a pseudo-differential equation driven by the fractional Laplacian, with a non-linear reaction term which is resonant at infinity between two non-principal eigenvalues: for such equation we prove existence of a non-trivial solution. Under further assumptions on the behavior of the reaction at zero, we detect at least three non-trivial solutions (one positive, one negative, and one of undetermined sign). All results are based on the properties of weighted fractional eigenvalues, and on Morse theory.
\end{abstract}

\maketitle

\begin{center}
Version of \today\
\end{center}

\section{Introduction}\label{sec1}

\noindent
In the present paper we deal with the following Dirichlet-type boundary value problem:
\beq\label{dir}
\begin{cases}
\fl u=f(x,u) & \text{in $\Omega$} \\
u=0 & \text{in $\Omega^c$.}
\end{cases}
\eeq
Here $\Omega\subset\R^N$ ($N\ge 2$) is a bounded domain with a $C^2$-boundary $\partial\Omega$, $s\in(0,1)$, and the operator driving the equation is the Dirichlet fractional Laplacian, defined for any measurable function $u:\R^N\to\R$ and any $x\in\R^N$ by
\beq\label{fl}
\fl u(x)=C_{N,s}\lim_{\eps\to 0^+}\int_{\R^N\setminus B_\eps(x)}\frac{u(x)-u(y)}{|x-y|^{N+2s}}\,dy,
\eeq
where $C_{N,s}>0$ is a suitable normalization constant. Throughout the paper we will always assume $C_{N,s}=1$ (for a precise evaluation of $C_{N,s}$, consistent with alternative definitions of the fractional Laplacian, see \cite[Remark 3.11]{CS1}). Finally, $f:\Omega\times\R\to\R$ is a $C^1$-Carath\'eodory mapping, subject to various growth conditions both at zero and at infinity.
\vskip2pt
\noindent
The fractional Laplacian is the prototype of non-local pseudo-differential operator, and it appears in many interesting phenomena as the infinitesimal generator of L\'evy processes. For a general introduction to this operator, see \cite{CS1,CS2,C1,DPV}. When combined with a non-linear reaction term, the fractional Laplacian gives rise to non-local boundary value problems like \eqref{dir}, where the Dirichlet condition is stated on the complementary set $\Omega^c=\R^N\setminus\Omega$, instead of just on $\partial\Omega$, for both formal and intrinsic reasons, see \cite{BV,SV,SV1}. The problem admits a weak formulation and can be treated through variational methods and critical point theory: some results in this direction can be found in \cite{AP,BCSS,BMS,CW,CD,DI,FZM,F,IMS,IS,MR,MP,SV,T,WS} (see also \cite{ILPS,IMS1} for problems involving the non-linear corresponding operator, namely the fractional $p$-Laplacian). Just as in the classical case, knowing the asymptotic behavior (sub-linear, linear, or super-linear) of the reaction term $f(x,\cdot)$ gives precious information about the existence and multiplicity of solutions.
\vskip2pt
\noindent
While in \cite{DI} the cases of sub- or super-linear reactions at infinity were considered, here we want to focus on the case when $f(x,\cdot)$ is asymptotically linear at infinity, with a positive limit slope. The main tool in dealing with such reactions is a comparison with the corresponding eigenvalue problem (a similar approach was recently applied to a semilinear Robin problem, see \cite{PR}). We will see that the fractional Laplacian admits a divergent sequence of positive eigenvalues, thus decomposing the positive half-line $\R^+$ into a countable family of spectral intervals of the type $[\lambda_k,\lambda_{k+1}]$. We will assume that the limit slope of our reaction lies in one of such intervals, with $k\ge 1$ (thus, avoiding the coercive case). Note that we allow resonance at infinity, i.e., the case when the limit slope is exactly an eigenvalue. Then, we shall focus on the behavior of $f(x,\cdot)$ near zero:
\begin{itemize}[leftmargin=1cm]
\item[$(a)$] if $f(x,\cdot)$ is linear at zero as well, but with a slope lying in a different spectral interval, then by computing the critical groups at zero and at infinity we will prove existence of at least one non-trivial solution of problem \eqref{dir};
\item[$(b)$] if $f(x,\cdot)$ is super-linear at zero, or linear but with a smaller slope that the first eigenvalue, then by applying the Mountain Pass Theorem and a more sophisticated Morse-theoretic argument we will prove that \eqref{dir} admits at least three non-trivial solutions (one positive, one negative, and one with undetermined sign).
\end{itemize}
The structure of the paper is the following: in Section \ref{sec2} we recall some tools from critical point theory and Morse theory, and we establish as well some preliminary results on weighted eigenvalue problems for the fractional Laplacian; in Section \ref{sec3} we prove our existence result; and in Section \ref{sec4} we prove our multiplicity result.

\section{Preliminaries}\label{sec2}

\noindent
In this section we collect some definitions and results which will be useful in what follows.
\vskip2pt
\noindent
{\bf Notation:} We shall always use the $N$-dimensional Lebesgue measure on subsets of $\R^N$; for all $p\in[1,\infty]$ we denote by $\|\cdot\|_p$ the standard norm of $L^p(\Omega)$; $\delta_{i,j}$ denotes the Kronecker symbol; for all $t\in\R$ we set $t^\pm=\max\{0,\pm t\}$ (we shall use the same symbol for functions); in a Banach space, $\to$ will denote strong convergence and $\rightharpoonup$ weak convergence.

\subsection{Some basic facts from critical point and Morse theories}\label{sub21}

Let $(X,\|\cdot\|)$ be a real Hilbert space with topological dual $(X^*,\|\cdot\|_*)$, $\varphi\in C^1(X)$ be a functional. By $K(\varphi)$ we denote the set of all critical points of $\varphi$, i.e., those ponts $u\in X$ s.t. $\varphi'(u)=0$ in $X^*$, while for all $c\in\R$ we set
\[K_c(\varphi)=\{u\in K(\varphi):\,\varphi(u)=c\},\]
besides we set
\[\overline\varphi^c=\{u\in X:\,\varphi(u)\le c\}.\]
Most results in critical point theory require the following Cerami compactness condition (a weaker version of the Palais-Smale condition):
\begin{align}\tag{C}\label{c} 
\begin{split}
&\text{Any sequence $(u_n)$ in $X$, s.t.\ $(\varphi(u_n))$ is bounded in $\R$ and $(1+\|u_n\|)\varphi'(u_n)\to 0$}\\
&\text{in $X^*$, admits a (strongly) convergent subsequence.}
\end{split}
\end{align}
If $\varphi$ satisfies \eqref{c}, it is enough to prove that $\varphi$ admits a sequence $(u_n)$ as above to achieve a critical point.
\vskip2pt
\noindent
We recall some basic notions from Morse theory (see \cite{BSW,C,MMP}). Let $u\in K(\varphi)$ be an {\em isolated} critical point, i.e., there exists a neighborhood $U\subset X$ of $u$ s.t. $K(\varphi)\cap U=\{u\}$. Then, for all integer $q\ge 0$ the $q$-{\em th critical group of $\varphi$ at $u$} is defined as
\[C_q(\varphi,u)=H_q(\overline\varphi^c\cap U,\overline\varphi^c\cap U\setminus\{u\}),\]
where $H_q(\cdot,\cdot)$ is the $q$-th (singular) homology group of a topological pair (see \cite[Definition 6.9]{MMP}). Note that, by the excision property of homology groups, $C_q(\varphi,u)$ is invariant with respect to $U$.
\vskip2pt
\noindent
Different types of critical points can be distinguished by means of the critical groups:

\begin{proposition}\label{cg-min}
{\rm \cite[Example 6.45 (a)]{MMP}} Let $\varphi\in C^1(X)$ satisfy \eqref{c}, $u\in X$ be a local minimizer of $\varphi$ and an isolated critical point of $\varphi$. Then, for all $q\ge 0$
\[C_q(\varphi,u)=\delta_{q,0}\R.\]
\end{proposition}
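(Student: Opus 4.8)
The plan is to show that, after shrinking the neighborhood $U$, the sublevel set $\overline\varphi^c\cap U$ collapses to the single point $\{u\}$ (with $c=\varphi(u)$), so that the critical group becomes simply the singular homology of a one-point space relative to the empty set. Since $u$ is a local minimizer and an isolated critical point, I would first fix $\rho>0$ so small that $\overline{B_\rho(u)}\subset U$, that $\varphi(v)\ge c$ for all $v\in\overline{B_\rho(u)}$ (local minimality), and that $K(\varphi)\cap\overline{B_\rho(u)}=\{u\}$ (isolatedness). By the excision invariance of critical groups recalled in the excerpt, $C_q(\varphi,u)=H_q\big(\overline\varphi^c\cap B_\rho(u),\,\overline\varphi^c\cap B_\rho(u)\setminus\{u\}\big)$.

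The heart of the argument is the claim $\overline\varphi^c\cap B_\rho(u)=\{u\}$. Since $\varphi\ge c$ on $\overline{B_\rho(u)}$, any $v$ in this set satisfies $\varphi(v)=c$; if $v\ne u$, then $v$ is not critical, so $\varphi'(v)\ne 0$ in $X^*$ and there is $w\in X$ with $\langle\varphi'(v),w\rangle<0$. Because $\varphi\in C^1(X)$ and $v$ lies in the open ball, we get $\varphi(v+tw)<c$ while $v+tw\in B_\rho(u)$ for all small $t>0$, contradicting $\varphi\ge c$ on $B_\rho(u)$. Hence $v=u$; in other words, an isolated local minimizer is automatically a strict local minimizer on a sufficiently small ball. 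It follows that $\overline\varphi^c\cap B_\rho(u)=\{u\}$ and $\overline\varphi^c\cap B_\rho(u)\setminus\{u\}=\emptyset$, so $C_q(\varphi,u)=H_q(\{u\},\emptyset)=H_q(\{u\})=\delta_{q,0}\R$ for every $q\ge 0$.

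The only delicate point is exactly this passage from local minimality plus isolatedness to strictness — which is why the statement couples the two hypotheses — and it is worth noting that the Cerami condition \eqref{c} is not actually invoked in this particular computation (it appears among the standing assumptions because it is what makes critical groups a useful invariant in the subsequent results). Alternatively, one could derive the conclusion from the second deformation lemma applied to the pair $\big(\overline\varphi^{c+\eps}\cap B_\rho(u),\overline\varphi^c\cap B_\rho(u)\big)$ for small $\eps>0$, but the direct argument above is the shortest route and I would present that one.
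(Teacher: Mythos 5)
Your argument is correct, and it is exactly the standard one: the paper gives no proof of this proposition, citing \cite[Example 6.45 (a)]{MMP}, where the computation is precisely yours — isolatedness of the critical point upgrades the local minimum to a strict one on a small ball (any other point of the ball at level $c=\varphi(u)$ would be an interior local minimizer, hence critical), so the pair reduces to $(\{u\},\emptyset)$ and $C_q(\varphi,u)=H_q(\{u\})=\delta_{q,0}\R$. Your side remarks are also accurate: condition \eqref{c} is not needed for this particular computation, and the second deformation lemma route is a legitimate but longer alternative.
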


\noindent
We recall that a critical point $u\in K(\varphi)$ is of {\em mountain pass type} if, for any neighborhood $U\subset X$ of $u$, the set
\[\big\{v\in U:\,\varphi(v)<\varphi(u)\big\}\]
is non-empty and path disconnected. We have the following result concerning the critical groups at such points:

\begin{proposition}\label{cg-mp1}
{\rm \cite[Proposition 6.100]{MMP}} Let $\varphi\in C^1(X)$ satisfy \eqref{c}, $u\in K(\varphi)$ be of mountain pass type. Then,
\[C_1(\varphi,u)\neq 0.\]
\end{proposition}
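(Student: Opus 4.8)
The plan is to compute $C_1(\varphi,u)$ straight from its definition, via the long exact homology sequence of the relevant topological pair, using the mountain-pass hypothesis to detect non-trivial $0$-dimensional homology. Set $c=\varphi(u)$. Since $u$ is an isolated critical point, fix a bounded neighbourhood $U$ of $u$ with $K(\varphi)\cap\overline U=\{u\}$; by the $U$-invariance of the critical groups recalled above we may shrink $U$ at will. Put $A=\overline\varphi^c\cap U$ and $B=A\setminus\{u\}$, so that $C_q(\varphi,u)=H_q(A,B)$, and recall that, $u$ being of mountain pass type, the set $W=\{v\in U:\,\varphi(v)<c\}$ is non-empty and path-disconnected, i.e.\ it has at least two path components.

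The first claim is that $B$ is homotopy equivalent to $W$, whence $\tilde H_0(B)\neq 0$. Indeed $B=W\cup\big(\{\varphi=c\}\cap U\setminus\{u\}\big)$, and since $u$ is the only critical point in $\overline U$, every point of the slice $\{\varphi=c\}\cap U\setminus\{u\}$ is regular for $\varphi$; a standard deformation along a (locally defined, cut-off) negative pseudo-gradient flow then pushes this slice into $\{\varphi<c\}$ without leaving $U$, producing a strong deformation retraction of $B$ onto $W$. In particular $B$ has at least two path components.

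The second claim is that $A$ is path-connected, so that $\tilde H_0(A)=0$. This is where the isolatedness of $u$ is essential: shrinking $U$ and using the deformation lemma, no path component of $\overline\varphi^c\cap U$ can be separated from $u$ (a component staying away from $u$ would, under the downward flow, have to escape $U$, which a small enough $U$ forbids), so $A$ is path-connected. Granting the two claims, the tail of the reduced homology exact sequence of the pair $(A,B)$ reads
\[
\cdots\longrightarrow H_1(A,B)\stackrel{\partial}{\longrightarrow}\tilde H_0(B)\longrightarrow\tilde H_0(A)=0 ,
\]
so $\partial$ maps onto the non-zero group $\tilde H_0(B)$; hence $C_1(\varphi,u)=H_1(A,B)\neq 0$.

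The algebra of the exact sequence is routine; the real work — and the main obstacle — is the construction of the two deformations, above all confining the pseudo-gradient flow to $U$ (handled in the usual way by multiplying the vector field by a cut-off supported near the regular points involved) and establishing the path-connectedness of $A$ for $U$ small. The Cerami condition \eqref{c} is precisely what makes the pseudo-gradient flow globally defined and the deformation lemma applicable. The argument in fact yields a little more, namely $\dim C_1(\varphi,u)\ge m-1$ where $m$ is the number of path components of $W$, in agreement with the model case of a non-degenerate saddle of Morse index one.
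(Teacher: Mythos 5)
The paper offers no proof of this proposition: it is quoted verbatim from \cite[Proposition 6.100]{MMP}, which in turn rests on Hofer's analysis of mountain pass points (the reference \cite{H} of this paper), so you are supplying an argument for a cited black box. Your overall strategy --- realize $C_1(\varphi,u)$ as $H_1(A,B)$, run the reduced long exact sequence of the pair, and detect non-triviality through $\tilde H_0$ --- is indeed the strategy behind the cited result, and the homological algebra is correct: if $i_*:\tilde H_0(B)\to\tilde H_0(A)$ has non-trivial kernel, exactness forces $H_1(A,B)\neq 0$. Your Claim 1 (that $B$ deformation retracts onto $W=\{\varphi<c\}\cap U$) is the second deformation lemma localized at an isolated critical point; it is standard though not routine, it is where \eqref{c} enters, and you correctly flag the cut-off issues, so I grant it.

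The genuine gap is Claim 2. First, the justification offered is not an argument: a path component of $A=\overline\varphi^c\cap U$ staying away from $u$ consists entirely of regular points whose downward flow simply continues to decrease $\varphi$; nothing forces such a component to "escape $U$", and shrinking $U$ makes it easier, not harder, for trajectories to exit, so the clause "which a small enough $U$ forbids" is backwards. Second, path-connectedness of $A$ is both more than the exact sequence requires and not available in general; what is actually needed is only that $i_*$ fail to be injective, i.e.\ (after Claim 1) that two distinct path components of $W$ lie in the \emph{same} path component of $A$ --- equivalently, that $u$ is a cut point of its own component of $\overline\varphi^c\cap U$. Deriving this from the mountain-pass hypothesis is the real content of the proposition, and it is absent from your sketch: the definition only says that $\{\varphi<c\}\cap V$ is non-empty and path-disconnected for \emph{every} neighborhood $V$ of $u$, and a priori the two components witnessing this for your fixed $U$ could already lie in different components of $A$, in which case $i_*$ separates them as well and the sequence yields nothing. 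One must exploit the quantifier over all $V\subset U$, together with a flow argument joining points of $W$ near $u$ to $u$ inside $\overline\varphi^c\cap U$, to exhibit two components of $B$ that merge precisely at $u$. The same unproved claim underlies your closing estimate $\dim C_1(\varphi,u)\ge m-1$.
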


\noindent
Now assume that
\[\inf_{u\in K(\varphi)}\varphi(u)=:\bar c>-\infty.\]
Then we can as well define the $q$-{\em th critical group of $\varphi$ at infinity} as
\[C_k(\varphi,\infty)=H_k(X,\overline\varphi^c),\]
with $c<\bar c$ (this definition also is invariant with respect to $c$). All these groups are real linear spaces, whose dimensions are related by several results (the Morse relations). We recall two useful consequences of such relations:

\begin{proposition}\label{mr}
Let $\varphi\in C^2(X)$ satisfy \eqref{c}:
\begin{enumroman}
\item\label{mr1} {\rm \cite[Proposition 6.61 $(c)$]{MMP}} if $K(\varphi)=\{u_0\}$, then for all $q\ge 0$
\[C_q(\varphi,\infty)=C_q(\varphi,u_0);\]
\item\label{mr2} {\rm \cite[Proposition 6.89]{MMP}} if $K(\varphi)$ is a finite set containing $0$, and $k\ge 0$ an integer s.t.\ $C_k(\varphi,0)=0$ and $C_k(\varphi,\infty)\neq 0$, then there exists $u\in K(\varphi)$ s.t.\ $C_k(\varphi,u)\neq 0$.
\end{enumroman}
\end{proposition}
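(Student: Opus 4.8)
The plan is to deduce both items from the Morse-type relations, in the form established in \cite{MMP}. I would first record the facts about sublevel sets that \eqref{c} buys us through the deformation lemmas: whenever $a<b$ and $\varphi$ has no critical value in $[a,b]$, the sublevel set $\overline\varphi^a$ is a strong deformation retract of $\overline\varphi^b$; and, more generally, if $c$ is the only critical value in $(a,b]$, then $\overline\varphi^b$ deformation retracts onto $\overline\varphi^a$ after deleting the critical points sitting at level $c$. These are the only analytic inputs; the rest is algebraic topology.

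For \ref{mr1} I would argue by comparison of pairs. Set $d:=\varphi(u_0)$; since $K(\varphi)=\{u_0\}$ we have $d=\bar c$, and I fix reals $a<d<b$. The crucial step is the identification $C_q(\varphi,u_0)=H_q(\overline\varphi^b,\overline\varphi^a)$ for all $q\ge 0$: this is the description of the critical groups of an isolated critical point sitting on an isolated critical level, and it applies here precisely because $u_0$ is at once the only critical point and the only critical point at level $d$, so that no Gromoll--Meyer separation of critical points at a common level is needed and the relative homology $H_q(\overline\varphi^{d+\eps},\overline\varphi^{d-\eps})$ equals $C_q(\varphi,u_0)$ for $\eps>0$ small, hence equals $H_q(\overline\varphi^b,\overline\varphi^a)$ by the deformation facts above. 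Now, since $\varphi$ has no critical value above $d$, the flow shows that $\overline\varphi^b$ is a strong deformation retract of $X$; comparing the long exact sequences of the pairs $(\overline\varphi^b,\overline\varphi^a)$ and $(X,\overline\varphi^a)$ and invoking the five lemma, the inclusion induces an isomorphism $H_q(\overline\varphi^b,\overline\varphi^a)\cong H_q(X,\overline\varphi^a)$, and the latter group is $C_q(\varphi,\infty)$ because $a<\bar c$. This proves \ref{mr1}.

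For \ref{mr2} I would use the Morse relations directly. Since $K(\varphi)$ is finite, there is a formal power series $Q(t)=\sum_{j\ge 0}q_jt^j$ with non-negative integer coefficients such that
\[\sum_{u\in K(\varphi)}\sum_{q\ge 0}\dim C_q(\varphi,u)\,t^q=\sum_{q\ge 0}\dim C_q(\varphi,\infty)\,t^q+(1+t)\,Q(t).\]
Reading off the coefficient of $t^k$ on both sides and using $q_j\ge 0$ (with the convention $q_{-1}:=0$), one gets
\[\sum_{u\in K(\varphi)}\dim C_k(\varphi,u)=\dim C_k(\varphi,\infty)+q_k+q_{k-1}\ge\dim C_k(\varphi,\infty)\ge 1,\]
the last inequality since $C_k(\varphi,\infty)\neq 0$. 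Hence $C_k(\varphi,u)\neq 0$ for some $u\in K(\varphi)$, and the assumption $C_k(\varphi,0)=0$ forces $u\neq 0$.

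The one genuinely delicate point is the identification $C_q(\varphi,u_0)=H_q(\overline\varphi^b,\overline\varphi^a)$ used in \ref{mr1}, which hinges on $u_0$ being simultaneously the unique critical point and alone at its critical level; everything else reduces to a chase through long exact sequences and, for \ref{mr2}, to formal bookkeeping with the Morse relations, which need nothing beyond \eqref{c} and the finiteness of $K(\varphi)$ — automatic in \ref{mr1}, assumed in \ref{mr2}.
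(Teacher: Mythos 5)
The paper itself gives no proof of this proposition: both items are quoted from \cite{MMP} (Propositions 6.61(c) and 6.89), so there is no internal argument to compare against. Your proof is correct and is essentially the standard one underlying those citations: for \ref{mr1}, the identification $C_q(\varphi,u_0)=H_q(\overline\varphi^{\,b},\overline\varphi^{\,a})$, valid because $u_0$ is an isolated critical point which is alone at the unique critical level, combined with the second deformation theorem (which under \eqref{c} applies also with $b=+\infty$, giving that $\overline\varphi^{\,b}$ is a deformation retract of $X$) and the resulting isomorphism $H_q(\overline\varphi^{\,b},\overline\varphi^{\,a})\cong H_q(X,\overline\varphi^{\,a})=C_q(\varphi,\infty)$; for \ref{mr2}, the Morse relations read off at the coefficient of $t^k$, yielding $\dim C_k(\varphi,\infty)\le\sum_{u\in K(\varphi)}\dim C_k(\varphi,u)$. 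The only point to flag is that writing the Morse relation as an identity of power series with finite nonnegative integer coefficients tacitly presumes the critical groups involved are finite-dimensional; the inequality you actually use is insensitive to this (and can also be obtained directly from the exact sequences of the sublevel-set filtration), so the argument stands, but it is worth stating that convention explicitly if you want the proof self-contained at the level of generality of the statement.
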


\noindent
Assume now that $\varphi\in C^2(X)$. For any $u\in K(\varphi)$, the {\em Morse index} of $\varphi$ at $u$ is defined as the supremum of dimensions of all linear subspaces of $X$ on which $\varphi''$ is negative definite, and the {\em nullity} of $\varphi$ at $u$ as the dimension of the kernel of $\varphi''(u)$ as a linear operator from $X$ into itself. In this framework, Proposition \ref{cg-mp1} has a stronger conclusion by means of the following result:

\begin{proposition}\label{cg-mp2}
{\rm \cite[Corollary 6.102]{MMP}} Let $\varphi\in C^2(X)$ satisfy \eqref{c}, $u\in K(\varphi)$ be s.t.\ both the Morse index $m_0$ and the nullity $\nu_0$ of $\varphi$ at $u$ are finite, $\nu_0\le 1$ whenever $m_0=0$, and $C_1(\varphi,u)\neq 0$. Then, for all $q\ge 0$
\[C_q(\varphi,u)=\delta_{q,1}\R.\]
\end{proposition}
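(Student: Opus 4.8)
The plan is to reduce the computation of $C_q(\varphi,u)$ to a finite-dimensional problem via the Gromoll--Meyer splitting theorem (often called the \emph{shifting theorem}, see \cite{MMP}). Since $\varphi\in C^2(X)$ and the Morse index $m_0$ and nullity $\nu_0$ of $\varphi$ at $u$ are both finite, the self-adjoint operator $\varphi''(u)$ is Fredholm, and $X$ decomposes as the orthogonal sum of its negative, null and positive eigenspaces, of dimensions $m_0$, $\nu_0$ and $\infty$ respectively. The splitting theorem then yields a local homeomorphism around $u$ and a $C^1$ ``reduced'' functional $\hat\varphi$ on (a neighbourhood of $0$ in) the $\nu_0$-dimensional null space, for which $0$ is an isolated critical point with Morse index $0$ and nullity $\nu_0$, such that
\[C_q(\varphi,u)=C_{q-m_0}(\hat\varphi,0)\qquad\text{for every }q\ge 0,\]
with the convention that singular homology in negative degree vanishes.

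I would then use two standard facts about the critical groups of $\hat\varphi$ at the isolated critical point $0$ in the finite-dimensional space $\R^{\nu_0}$: (i) $C_q(\hat\varphi,0)=0$ whenever $q<0$ or $q>\nu_0$; (ii) $C_0(\hat\varphi,0)\neq 0$ if and only if $0$ is a local minimizer of $\hat\varphi$, and in that case $C_q(\hat\varphi,0)=\delta_{q,0}\R$ (the ``if'' part is Proposition \ref{cg-min} applied to $\hat\varphi$, the ``only if'' part is its converse). Combining (i) with the shifting formula, the hypothesis $C_1(\varphi,u)\neq 0$ forces $0\le 1-m_0\le\nu_0$, hence $m_0\in\{0,1\}$ and $m_0+\nu_0\ge 1$.

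Suppose $m_0=1$. Then $C_1(\varphi,u)=C_0(\hat\varphi,0)\neq 0$, so by (ii) $0$ is a local minimizer of $\hat\varphi$ and $C_q(\hat\varphi,0)=\delta_{q,0}\R$; the shifting formula gives $C_q(\varphi,u)=\delta_{q,1}\R$. Suppose instead $m_0=0$; then $m_0+\nu_0\ge 1$ gives $\nu_0\ge 1$, while the hypothesis $\nu_0\le 1$ (which applies since $m_0=0$) gives $\nu_0=1$. Thus $\hat\varphi$ is a $C^1$ function of one real variable having $0$ as an isolated critical point, so $0$ is either a local minimizer ($C_q(\hat\varphi,0)=\delta_{q,0}\R$), a local maximizer ($C_q(\hat\varphi,0)=\delta_{q,1}\R$), or neither ($C_q(\hat\varphi,0)=0$ for all $q$). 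Since $C_1(\hat\varphi,0)=C_1(\varphi,u)\neq 0$, only the local-maximum alternative survives, and once more $C_q(\varphi,u)=\delta_{q,1}\R$.

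The only genuinely delicate ingredient is the shifting theorem: one must check that, for a $C^2$ functional on a Hilbert space, the finiteness of $m_0$ and $\nu_0$ does entail the Fredholm property of $\varphi''(u)$ and hence the Morse--Gromoll--Meyer splitting together with a well-defined reduced functional on the null space, and this is exactly where the $C^2$ regularity cannot be dispensed with. The remaining pieces --- the vanishing range of the critical groups of a function on $\R^{\nu_0}$, the minimum/maximum trichotomy in dimension one, and the converse of Proposition \ref{cg-min} --- are elementary and finite-dimensional, so once the shifting theorem is available the proof is just bookkeeping with the shifting isomorphism.
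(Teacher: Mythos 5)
The paper offers no proof of this proposition: it is quoted (almost) verbatim from \cite[Corollary 6.102]{MMP}, so there is nothing internal to compare you against. Your argument is, in substance, the standard proof of that corollary: the Gromoll--Meyer/shifting theorem reduces the computation to the reduced functional $\hat\varphi$ on the $\nu_0$-dimensional kernel, the constraint $0\le 1-m_0\le\nu_0$ forced by $C_1(\varphi,u)\neq 0$ pins down $m_0\in\{0,1\}$, and the two cases are closed by the characterization of local minimizers via $C_0\neq 0$ together with the one-dimensional min/max/neither trichotomy. All of that bookkeeping is correct, including the homology computations $C_q=\delta_{q,1}\R$ for a local maximum of a one-variable function and $C_q\equiv 0$ in the ``neither'' case.

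The one step that does not hold as you state it is the claim that finiteness of $m_0$ and $\nu_0$ ``does entail'' the Fredholm property of $\varphi''(u)$. It does not: a self-adjoint operator such as ${\rm diag}(1,1/2,1/3,\dots)$ on $\ell^2$ has Morse index $0$ and nullity $0$ but is not Fredholm (its range is not closed), so the splitting and shifting theorems are not available from the stated hypotheses alone. In \cite{MMP} the Fredholmness of $\varphi''(u)$ is an explicit hypothesis of Corollary 6.102 (and of the shifting theorem it rests on); the paper's restatement silently drops it. This is harmless in the applications made here, where $\varphi''(u)$ is the identity minus a compact operator (by the compact embedding $\s\hookrightarrow L^p(\Omega)$) and is therefore Fredholm of index zero, but in your write-up you should add Fredholmness as a hypothesis, or verify it in the application, rather than attempt to derive it from $m_0,\nu_0<\infty$. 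With that hypothesis restored, your proof is complete.
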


\subsection{Fractional boundary value problems}\label{sub22}

Here we recall some basic results about \eqref{dir}-type problems. In all the forthcoming results we will assume the following hypotheses on the non-linearity:
\begin{itemize}[leftmargin=1cm]
\item[${\bf H}_0$] $f:\Omega\times\R\to\R$ is s.t.\ $f(\cdot,t)$ is measurable in $\Omega$ for all $t\in\R$, $f(x,\cdot)\in C^1(\R)$ for a.e.\ $x\in\Omega$, and there exist $a_0>0$, $p\in(1,2^*_s)$ s.t.\ for a.e. $x\in\Omega$ and all $t\in\R$
\[|f(x,t)|\le a_0(1+|t|^{p-1}).\]
\end{itemize}
First we define the function spaces that we are going to use. For all measurable function $u:\R^N\to\R$ we set
\[[u]_{s,2}^2=\iint_{\R^N\times\R^N}\frac{(u(x)-u(y))^2}{|x-y|^{N+2s}}\,dx\,dy,\]
then we define the fractional Sobolev space
\[H^s(\R^N)=\{u\in L^2(\R^N):\,[u]_{s,2}<\infty\}\]
(see \cite{DPV}). We restrict ourselves to the subspace
\[H^s_0(\Omega)=\{u\in H^s(\R^N):\,u(x)=0 \ \text{for a.e. $x\in\Omega^c$}\},\]
which is a separable Hilbert space under the norm $\|u\|=[u]_{s,2}$, induced by the inner product
\[\langle u,v\rangle =\iint_{\R^N\times\R^N}\frac{(u(x)-u(y))(v(x)-v(y))}{|x-y|^{N+2s}}\,dx\,dy\]
(see \cite{SV}). We denote by $(H^{-s}(\Omega),\|\cdot\|_*)$ the topological dual of $\s$. The critical exponent is defined as $2^*_s=2N(N-2s)^{-1}$, and the embedding $H^s_0(\Omega)\hookrightarrow L^p(\Omega)$ is continuous and compact for all $p\in[1,2^*_s)$ (see \cite[Lemma 8]{DPV}). Moreover, we introduce the positive order cone
\[H^s_0(\Omega)_+=\{u\in H^s_0(\Omega):\,u(x)\ge 0 \ \text{for a.e. $x\in\Omega$}\},\]
which has an empty interior with respect to the $H^s_0(\Omega)$-topology.
\vskip2pt
\noindent
We shall also need the weighted H\"older-type spaces defined as follows (see \cite{IMS}). Set $\delta(x)={\rm dist}(x,\Omega^c)$ for all $x\in\R^N$ and define
\[C^0_\delta(\overline\Omega)=\Big\{u\in C^0(\overline\Omega):\,\frac{u}{\delta^s}\in C^0(\overline\Omega)\Big\},\]
\[C^\alpha_\delta(\overline\Omega)=\Big\{u\in C^0(\overline\Omega):\,\frac{u}{\delta^s}\in C^\alpha(\overline\Omega)\Big\} \ (\alpha\in(0,1)),\]
endowed with the norms
\[\|u\|_{0,\delta}=\Big\|\frac{u}{\delta^s}\Big\|_\infty, \ \|u\|_{\alpha,\delta}=\|u\|_{0,\delta}+\sup_{x\neq y}\frac{|u(x)/\delta^s(x)-u(y)/\delta^s(y)|}{|x-y|^\alpha},\]
respectively. For all $0\le\alpha<\beta<1$ the embedding $C^\beta_\delta(\overline\Omega)\hookrightarrow C^\alpha_\delta(\overline\Omega)$ is continuous and compact. In this case, the positive cone $C^0_\delta(\overline\Omega)_+$ has a nonempty interior given by
\[{\rm int}\,(C^0_\delta(\overline\Omega)_+)=\Big\{u\in C^0_\delta(\overline\Omega):\,\frac{u(x)}{\delta^s(x)}>0 \ \text{for all $x\in\overline\Omega$}\Big\}.\]
The space $H^s_0(\Omega)$ provides the natural framework for the study of problem \eqref{dir}: a function $u\in H^s_0(\Omega)$ is a {\em (weak) solution} of \eqref{dir}, if for all $v\in H^s_0(\Omega)$
\[\langle u,v\rangle=\int_\Omega f(x,u)v\,dx.\]
Equivalently, we may define the energy functional $\varphi:H^s_0(\Omega)\to\R$ by setting for all $(x,t)\in\Omega\times\R$
\[F(x,t)=\int_0^t f(x,\tau)\,d\tau,\]
and for all $u\in H^s_0(\Omega)$
\beq\label{phi}
\varphi(u)=\frac{\|u\|^2}{2}-\int_\Omega F(x,u)\,dx.
\eeq
By ${\bf H}_0$ and the continuous embedding $H^s_0(\Omega)\hookrightarrow L^p(\Omega)$ we have $\varphi\in C^2(H^s_0(\Omega))$, and for all $u,v,w\in H^s_0(\Omega)$
\[\varphi'(u)(v)=\langle u,v\rangle-\int_\Omega f(x,u)v\,dx,\]
\[\varphi''(u)(v,w)=\langle v,w\rangle-\int_\Omega f'_t(x,u)vw\,dx.\]
So, $u$ is a solution of \eqref{dir} iff $\varphi'(u)=0$ in $H^{-s}(\Omega)$. We briefly discuss some regularity theory for the solutions of \eqref{dir},  starting with a simple {\em a priori} bound:

\begin{proposition}\label{apb}
{\rm \cite[Theorem 3.2]{IMS}} Let ${\bf H}_0$ hold, $u\in H^s_0(\Omega)$ be a solution of \eqref{dir}. Then $u\in L^\infty(\Omega)$ and
\[\|u\|_\infty\le M(\|u\|_{2^*_s}),\]
where $M\in C(\R^+,\R^+)$ is a non-decreasing function independent of $u$.
\end{proposition}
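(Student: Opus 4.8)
The plan is to establish the bound by a Moser-type iteration adapted to the non-local setting, in the spirit of the classical De Giorgi--Nash--Moser scheme, exploiting the strict sub-criticality $p<2^*_s$ to gain integrability at each step. The first ingredient is the non-local substitute for the chain rule: if $\phi\in C^1(\R)$ is odd, non-decreasing and bounded with bounded derivative, and $\psi(t)=\int_0^t\sqrt{\phi'(\tau)}\,d\tau$, then the elementary inequality $(a-b)(\phi(a)-\phi(b))\ge(\psi(a)-\psi(b))^2$ (Cauchy--Schwarz, using $\phi'\ge 0$), once integrated against the Gagliardo kernel, gives $\langle u,\phi(u)\rangle\ge\|\psi(u)\|^2$; moreover $\phi(u),\psi(u)\in\s$ are admissible test functions, being Lipschitz functions of $u$ that vanish on $\Omega^c$.

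For $\gamma\ge 1$ and $T>0$ I would take $\phi=\phi_{\gamma,T}$ equal to $|t|^{\gamma-1}t$ for $|t|\le T$ and continued linearly for $|t|>T$, so that $\psi(u)=\tfrac{2\sqrt\gamma}{\gamma+1}\,|u|^{(\gamma+1)/2}\operatorname{sgn}(u)$ on $\{|u|\le T\}$ and $|\phi_{\gamma,T}(u)|\le|u|^\gamma$ everywhere. Testing the weak formulation for $u$ with $\phi_{\gamma,T}(u)$, bounding the left-hand side from below by the chain-rule inequality together with the Sobolev inequality $\|\psi(u)\|_{2^*_s}^2\le S^2\|\psi(u)\|^2$, and the right-hand side from above via ${\bf H}_0$ and $|\phi_{\gamma,T}(u)|\le|u|^\gamma$, and then letting $T\to\infty$ (monotone convergence on the left, dominated convergence on the right), I would obtain the single-step estimate
\[\|u\|_{(\gamma+1)2^*_s/2}\le\big(C_0(\gamma+1)\big)^{1/(\gamma+1)}\,\big(1\vee\|u\|_{\gamma+p-1}\big)^{(\gamma+p-1)/(\gamma+1)},\]
valid whenever $u\in L^{\gamma+p-1}(\Omega)$, with $C_0>0$ depending only on $N,s,p,a_0,|\Omega|$ and $1\vee t:=\max\{1,t\}$.

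Next I would iterate. Put $q_0=2^*_s$ and, given $q_j$, set $\gamma_j=q_j-p+1$ (this is $\ge 1$ since $p<2^*_s$, and $u\in L^{\gamma_j+p-1}=L^{q_j}$ is available from the previous step) and $q_{j+1}=(\gamma_j+1)2^*_s/2=\chi\,(q_j-p+2)$ with $\chi=2^*_s/2>1$. The recursion $q_{j+1}=\chi q_j-\chi(p-2)$ has fixed point $L^*=\chi(p-2)/(\chi-1)=N(p-2)/(2s)$, and a short computation, resting on the identity $2^*_s/(2^*_s-2)=N/(2s)$, shows that $q_0-L^*>0$ \emph{exactly} when $p<2^*_s$; hence $q_j-L^*=\chi^j(q_0-L^*)\to+\infty$ geometrically, so that $\sum_j(\gamma_j+1)^{-1}<\infty$ and $\sum_j(\gamma_j+1)^{-1}\log\!\big(C_0(\gamma_j+1)\big)<\infty$. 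Taking logarithms in the single-step estimate, noting $q_j/(\gamma_j+1)=1+(p-2)/(\gamma_j+1)$, and unrolling the recursion, the pertinent infinite products $\prod_j q_j/(\gamma_j+1)$ converge, and one gets $\|u\|_{q_j}\le M(\|u\|_{2^*_s})$ for every $j$, where $M\in C(\R^+,\R^+)$ is non-decreasing and does not depend on $u$. Finally, letting $q_j\to\infty$ and invoking Chebyshev's inequality yields $u\in L^\infty(\Omega)$ with $\|u\|_\infty\le M(\|u\|_{2^*_s})$.

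The hard part is the quantitative bookkeeping rather than any single estimate: one has to keep the explicit factor $C_0(\gamma+1)$ in the single-step inequality and to check that the exponents $q_j$ grow geometrically — which is precisely the point where the strict inequality $p<2^*_s$ is used — so that the limiting constant depends on $u$ only through $\|u\|_{2^*_s}$, and continuously so. A secondary, purely routine matter is to justify that $\phi_{\gamma,T}(u)$ and $\psi(u)$ are admissible test functions and to control the passage $T\to\infty$; this is exactly what the truncation built into $\phi_{\gamma,T}$ is designed for.
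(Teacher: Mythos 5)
The paper does not actually prove this proposition: it is imported verbatim from \cite[Theorem 3.2]{IMS}, whose proof is precisely the truncated-power Moser iteration you describe (pointwise chain-rule inequality for the Gagliardo form, single-step gain of integrability via Sobolev, geometric growth of the exponents), so your reconstruction is essentially the same argument and its key steps — the inequality $\langle u,\phi(u)\rangle\ge\|\psi(u)\|^2$, the single-step estimate, and the verification that $q_0>L^*$ exactly when $p<2^*_s$ — are all sound. The only cosmetic slip is that your linearly continued $\phi_{\gamma,T}$ is unbounded, contrary to the hypotheses you state for the chain-rule lemma; but only its global Lipschitz continuity (together with $\phi_{\gamma,T}(0)=0$) is actually needed for $\phi_{\gamma,T}(u)$ to be an admissible test function, so this does not affect the proof.
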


\noindent
While solutions of fractional equations exhibit good interior regularity properties, they may have a singular behavior on the boundary. We have the following global regularity result:

\begin{proposition}\label{reg}
Let ${\bf H}_0$ hold, $u\in H^s_0(\Omega)$ be a solution of \eqref{dir}. Then
\begin{enumroman}
\item\label{reg1} {\rm \cite[Corollary 5.6]{RS}} $u\in C^{1,\beta}(\Omega)$ for any $\beta\in(\max\{0,2s-1\},2s)$;
\item\label{reg2} {\rm \cite[Theorem 1.2]{RS}} $u\in C^\alpha_\delta(\overline\Omega)$ and
\[\|u\|_{\alpha,\delta}\le C(1+\|u\|_{2^*_s}),\]
with $\alpha\in(0,\min\{s,1-s\})$ and $C>0$ independent of $u$.
\end{enumroman}
\end{proposition}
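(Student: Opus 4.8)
The plan is to reduce \eqref{dir} to a \emph{linear} fractional Dirichlet problem with a bounded datum and then read \ref{reg1} and \ref{reg2} off the linear regularity theory of \cite{RS}; beyond this reduction the argument is a routine bootstrap, and I do not expect a serious obstacle.

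First I would use Proposition \ref{apb}, which already gives $u\in L^\infty(\Omega)$ with $\|u\|_\infty\le M(\|u\|_{2^*_s})$. Setting $g(x):=f(x,u(x))$, hypothesis ${\bf H}_0$ forces $|g(x)|\le a_0\big(1+|u(x)|^{p-1}\big)\le a_0\big(1+M(\|u\|_{2^*_s})^{p-1}\big)$ for a.e.\ $x\in\Omega$, so that $g\in L^\infty(\Omega)$ with $\|g\|_\infty$ bounded by a non-decreasing function of $\|u\|_{2^*_s}$; and by the definition of weak solution, $u$ solves the linear problem $\fl u=g$ in $\Omega$, $u=0$ in $\Omega^c$. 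Since $g\in L^\infty(\Omega)$, part \ref{reg2} follows by applying \cite[Theorem 1.2]{RS} to this linear problem: it yields $u/\delta^s\in C^\alpha(\overline\Omega)$, i.e.\ $u\in C^\alpha_\delta(\overline\Omega)$, for $\alpha\in(0,\min\{s,1-s\})$, together with an estimate whose right-hand side is a multiple of $\|g\|_\infty$; chaining this with the bound on $\|g\|_\infty$ above gives the stated estimate for $\|u\|_{\alpha,\delta}$.

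For \ref{reg1} I would argue on compact subsets of $\Omega$. Because $g\in L^\infty(\Omega)$, the interior estimates behind \cite[Corollary 5.6]{RS} already give $u\in C^\sigma(\Omega)$ for some $\sigma>0$ (as close to $\min\{2s,1\}$ as one wishes). Then one bootstraps: since $f(x,\cdot)\in C^1(\R)$ and $u$ is bounded, $g=f(\cdot,u)$ is locally as Hölder-regular as $u$ is, so each further application of the non-local Schauder estimate raises the regularity of $u$ by (almost) $2s$ derivatives; after one step if $2s>1$, or finitely many if $2s\le 1$, one reaches $u\in C^{1,\beta}(\Omega)$ for every $\beta\in(\max\{0,2s-1\},2s)$, which is the assertion. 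The step I would scrutinise most is this bootstrap: the bookkeeping of the exponents (ensuring the iteration does not stall when the running exponent crosses $1$, which is handled by the usual arbitrarily small loss in the fractional Schauder estimates) and, above all, the claim that $x\mapsto f(x,u(x))$ is genuinely Hölder and not merely bounded, which rests on the joint regularity of $f$ encoded in the $C^1$-Carath\'eodory structure rather than on the bare growth bound in ${\bf H}_0$. Everything else is already contained in Proposition \ref{apb} and in \cite{RS}.
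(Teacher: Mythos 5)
Your treatment of \ref{reg2} is essentially the paper's own (implicit) argument: the paper states this proposition as a pure citation, and the intended reduction is exactly the one you perform — Proposition \ref{apb} together with the growth bound in ${\bf H}_0$ give $g:=f(\cdot,u)\in L^\infty(\Omega)$ with $\|g\|_\infty$ controlled by a non-decreasing function of $\|u\|_{2^*_s}$, and then \cite[Theorem 1.2]{RS}, applied to the linear problem $\fl u=g$ in $\Omega$, $u=0$ in $\Omega^c$, yields $u\in C^\alpha_\delta(\overline\Omega)$ with the corresponding estimate. (A minor remark: chaining the RS estimate with $\|g\|_\infty\le a_0(1+M(\|u\|_{2^*_s})^{p-1})$ gives a bound by a monotone function of $\|u\|_{2^*_s}$ rather than literally an affine one, but this is all that is ever used later in the paper.)

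For \ref{reg1}, however, your route diverges from the paper and contains a genuine gap, which you flag but cannot discharge from the stated hypotheses: the Schauder bootstrap requires $x\mapsto f(x,u(x))$ to be (locally) H\"older, and ${\bf H}_0$ only assumes $f(\cdot,t)$ \emph{measurable} in $x$; the $C^1$-Carath\'eodory structure is regularity in the $t$-variable alone, so it does not make $g=f(\cdot,u)$ H\"older in $x$ no matter how regular $u$ becomes. With $g$ merely bounded, the interior theory gives $u\in C^{2s-\eps}_{\rm loc}(\Omega)$ and stops there, which for $\beta>2s-1$ is strictly short of the asserted $C^{1,\beta}(\Omega)$; so your iteration cannot even perform its second step, and the conclusion of \ref{reg1} cannot be reached this way under ${\bf H}_0$. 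The paper does not bootstrap at all: item \ref{reg1} is simply quoted from \cite[Corollary 5.6]{RS} for the linear problem, exactly as \ref{reg2} is quoted from Theorem 1.2, and no H\"older continuity of $g$ in $x$ is ever produced or discussed. If you want a self-contained proof you must either invoke that interior result of \cite{RS} as the paper does, or strengthen the hypotheses to include H\"older dependence of $f$ on $x$; note also that only \ref{reg2} is actually used in the rest of the paper, so the delicate item is the one your argument does not close.
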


\noindent
From \ref{reg2}, also recalling that $u=0$ in $\Omega^c$, one can see that the limit in \eqref{fl} exists in $\R$ and the equation in \eqref{dir} is satisfied in a pointwise sense (see \cite[Proposition 2.12]{IMS1}). We will also exploit the following fractional Hopf Lemma:

\begin{proposition}\label{hl}
{\rm \cite[Lemma 1.2]{GS1}} Let ${\bf H}_0$ hold, $f(x,t)\ge -ct$ for a.e. $x\in\Omega$ and all $t\in\R^+$ ($c>0$), and $u\in H^s_0(\Omega)_+$ be a solution of \eqref{dir}. Then, either $u=0$, or $u\in {\rm int}\,(C^0_\delta(\overline\Omega)_+)$.
\end{proposition}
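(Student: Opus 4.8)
The plan is to split the claim into an interior strong maximum principle and a boundary non‑degeneracy estimate for the operator $\fl+c$. Suppose $u\neq0$, otherwise there is nothing to prove. Since $u\ge0$, plugging $t=u(x)$ into the hypothesis $f(x,t)\ge-ct$ gives $\fl u=f(\cdot,u)\ge-cu$ in $\Omega$, that is, $\fl u+cu\ge0$ weakly in $\Omega$; moreover this right‑hand side is not identically zero, otherwise testing $\fl u+cu=0$ with $u$ would force $u=0$. Two regularity inputs will be used: by Proposition \ref{reg}\ref{reg2}, $u\in C^\alpha_\delta(\overline\Omega)$, so $u/\delta^s$ extends to a continuous function on $\overline\Omega$ and it suffices to show this extension is strictly positive everywhere on $\overline\Omega$; by Proposition \ref{reg}\ref{reg1}, $u\in C^{1,\beta}(\Omega)$ with $\beta>\max\{0,2s-1\}$, so (as recalled after Proposition \ref{reg}) the equation in \eqref{dir} holds pointwise in $\Omega$, with $\fl u(x)$ given by an absolutely convergent integral.

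Interior positivity $u>0$ in $\Omega$ I would prove by contradiction. If $u(x_0)=0$ for some $x_0\in\Omega$, then $x_0$ is a global minimum of $u$, so $\nabla u(x_0)=0$ and, by the $C^{1,\beta}$ bound, $0\le u(y)\le C|y-x_0|^{1+\beta}$ near $x_0$; since $\beta>2s-1$ and $u$ has compact support, the integrand in \eqref{fl} at $x_0$ is integrable and of constant sign, whence
\[\fl u(x_0)=-\int_{\R^N}\frac{u(y)}{|x_0-y|^{N+2s}}\,dy\le0,\]
with strict inequality unless $u\equiv0$. On the other hand the pointwise equation gives $\fl u(x_0)=f(x_0,0)\ge0$. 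Hence $u\equiv0$, a contradiction; so $u>0$ in $\Omega$, i.e.\ $u/\delta^s>0$ in $\Omega$.

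For the boundary part it remains to show $\displaystyle\liminf_{\Omega\ni x\to x_0}u(x)/\delta^s(x)>0$ for every $x_0\in\partial\Omega$; combined with continuity of $u/\delta^s$ on $\overline\Omega$ and interior positivity, this yields $u/\delta^s>0$ on $\overline\Omega$, i.e.\ $u\in{\rm int}\,(C^0_\delta(\overline\Omega)_+)$ by the description in Section \ref{sub22}. Since $\partial\Omega\in C^2$ it satisfies a uniform interior ball condition: there is $\rho>0$ such that each $y\in\partial\Omega$ admits an interior tangent ball $B_\rho(z_y)\subset\Omega$ with $y\in\partial B_\rho(z_y)$. For $y$ ranging in a small neighborhood of $x_0$ the concentric balls $\overline{B_{\rho/2}(z_y)}$ all lie in a fixed compact $K\subset\Omega$, so $u\ge m:=\min_K u>0$ on each of them. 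On the annulus $A_y:=B_\rho(z_y)\setminus\overline{B_{\rho/2}(z_y)}$ I would introduce the comparison function $\eta_y$ solving $\fl\eta_y+c\,\eta_y=0$ in $A_y$ with exterior datum $m$ on $\overline{B_{\rho/2}(z_y)}$ and $0$ on $\R^N\setminus B_\rho(z_y)$. Then $\fl(u-\eta_y)+c(u-\eta_y)=f(\cdot,u)+cu\ge0$ in $A_y$, while $u-\eta_y\ge0$ on $\R^N\setminus A_y$ (it equals $u-m\ge0$ on $\overline{B_{\rho/2}(z_y)}$ and $u\ge0$ on $\R^N\setminus B_\rho(z_y)$), so the weak maximum principle for $\fl+c$ — valid because $c>0$ — gives $u\ge\eta_y$ on $\R^N$. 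Near $\partial B_\rho(z_y)$ the function $\eta_y$ is positive, bounded, and solves a linear equation with bounded zeroth‑order coefficient and vanishing exterior datum, hence by the fractional boundary regularity theory on a ball (Proposition \ref{reg} together with the sharp $\delta^s$ boundary behaviour of nonnegative solutions, cf.\ \cite{RS}) one has $\eta_y(x)\ge c_0\,{\rm dist}(x,B_\rho(z_y)^c)^s$ there, with $c_0>0$ independent of $y$ by translation–rotation invariance of the construction. Finally, every $x\in\Omega$ sufficiently close to $x_0$ lies on the inner normal through its nearest boundary point $y$, for which ${\rm dist}(x,B_\rho(z_y)^c)=\delta(x)$ and $x\in A_y$; thus $u(x)\ge\eta_y(x)\ge c_0\,\delta(x)^s$, which gives the desired $\liminf$.

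I expect the boundary non‑degeneracy to be the main obstacle. Unlike the classical Hopf lemma there is no linear barrier and the natural decay rate is $\delta^s$, so one has to manufacture a comparison function with sharp $\delta^s$ boundary behaviour on an annular shell where $u$ is already known to be bounded below, and to keep the resulting constant uniform in order to upgrade a one‑direction bound into the full $\liminf$. The interior part, in contrast, is a short direct computation, once the equation is known to hold pointwise and $u$ has enough interior regularity for $\fl u$ to be evaluated at an interior zero.
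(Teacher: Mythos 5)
The paper itself does not prove Proposition \ref{hl}; it is quoted from \cite[Lemma 1.2]{GS1}, and your attempt reproduces the architecture of that external proof (interior strong maximum principle by evaluating $\fl u$ at an interior zero, then an annular barrier plus weak comparison for $\fl+c$ at the boundary). The interior step is essentially sound: with $u\in C^{1,\beta}_{\rm loc}(\Omega)\cap L^\infty(\R^N)$ and $1+\beta>2s$, the integral in \eqref{fl} converges at a global minimum $x_0$ with $u(x_0)=0$ and gives $\fl u(x_0)<0$ unless $u\equiv 0$. The only point to tidy up is that the pointwise equation and the inequality $f(x,t)\ge -ct$ hold only for a.e.\ $x$, so you should obtain $\fl u(x_0)\ge 0$ by continuity of $\fl u$ along a sequence of admissible points $x_n\to x_0$ (where $\fl u(x_n)=f(x_n,u(x_n))\ge -cu(x_n)\to 0$), rather than by evaluating $f$ at $x_0$ itself.

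The genuine gap is in the boundary step, exactly where you anticipate the difficulty. The inequality $\eta_y(x)\ge c_0\,{\rm dist}(x,B_\rho(z_y)^c)^s$ is the entire content of the fractional Hopf lemma for the model annulus problem, and it does not follow from Proposition \ref{reg} or from \cite{RS}: those results give the upper bound $|\eta_y|\le C\delta^s$ and H\"older continuity of $\eta_y/\delta^s$ up to the boundary, which is perfectly compatible with $\eta_y/\delta^s$ vanishing on $\partial B_\rho(z_y)$. Appealing to ``the sharp $\delta^s$ boundary behaviour of nonnegative solutions'' is circular, since that lower bound is precisely what Proposition \ref{hl} asserts. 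What is needed --- and what \cite{GS1} actually supplies --- is an \emph{explicit} subsolution of $\fl v+cv\le 0$ on the annulus: one takes $\epsilon\big[(\rho^2-|x-z_y|^2)_+^s+M\chi_{B_{\rho/2}(z_y)}\big]$, using that the first term has constant fractional Laplacian in the ball while the fractional Laplacian of the indicator of the inner ball is bounded above by a strictly negative constant on the annulus, so that for $M$ large and $\epsilon$ small the zeroth-order term is absorbed and the exterior constraints $v\le m$, $v\le 0$ are met. Comparison with this explicit barrier (not with the abstractly defined $\eta_y$) yields a computable $c_0>0$, uniform in $y$ by translation and rotation invariance. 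Without that construction, the second link of the chain $u\ge\eta_y\ge c_0\,\delta^s$ is unsupported.
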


\subsection{Fractional weighted eigenvalues}\label{sub23}

Here we focus on the following class of weighted fractional eigenvalue problems:
\beq\label{ev}
\begin{cases}
\fl u=\lambda\eta(x)u & \text{in $\Omega$} \\
u=0 & \text{in $\Omega^c$.}
\end{cases}
\eeq
Here $\eta\in L^\infty(\Omega)$ is a weight function s.t.\ $\eta(x)\ge\eta_0>0$ for a.e.\ $x\in\Omega$, while $\lambda>0$ is a real parameter. Clearly \eqref{ev} is a special case of problem \eqref{dir} with the linear reaction $f(x,t)=\lambda\eta(x)t$, which satisfies ${\bf H}_0$, so most of the results of Subsection \ref{sub22} apply. In particular, solutions of \eqref{ev} coincide with the critical points of the functional $\psi_{\eta,\lambda}\in C^2(\s)$ defined for all $u\in\s$ by
\[\psi_{\eta,\lambda}(u)=\frac{\|u\|^2}{2}-\frac{\lambda}{2}\int_\Omega\eta(x)u^2\,dx.\]
Let us fix $\eta$ as above. For all $\lambda>0$ we have $0\in K(\psi_{\eta,\lambda})$. If \eqref{ev} has a non-trivial solution $u\in\s\setminus\{0\}$, then we say that $\lambda$ is an {\em eigenvalue} and $u$ an associated {\em eigenfunction}. The set of all eigenvalues is the spectrum $\sigma(\eta)\subset\R^+_0$. The following result yields a complete description of the eigenpairs of \eqref{ev}, and follows closely the model of \cite[Proposition 9]{SV1} for the case $\eta\equiv 1$ (see also \cite{GS,PSY}). On the space $L^2(\Omega)$ we will use both the standard norm $\|\cdot\|_2$ and the equivalent weighted norm defined by
\[\|u\|_\eta^2=\int_\Omega\eta(x)u^2\,dx,\]
in which case we will write $L^2_\eta(\Omega)$.

\begin{proposition}\label{spe}
Let $\eta\in L^\infty(\Omega)$ be s.t.\ $\eta(x)\ge\eta_0>0$ for a.e.\ $x\in\Omega$. Then the set $\sigma(\eta)$ consists of a non-decreasing sequence
\[0<\lambda_1(\eta)<\lambda_2(\eta)\le\ldots\le\lambda_k(\eta)\le\ldots\]
with the following properties:
\begin{enumroman}
\item\label{spe1} the first eigenvalue $\lambda_1(\eta)$ is simple and isolated, with a unique positive eigenfunction $e_{1,\eta}\in\s$ s.t.\ $\|e_{1,\eta}\|_\eta=1$, with constant sign eigenfunctions, and
\[\lambda_1(\eta)=\inf_{u\in\s\setminus\{0\}}\frac{\|u\|^2}{\|u\|_\eta^2};\]
\item\label{spe2} for any $k\ge 2$, $\lambda_k(\eta)$ has a finite-dimensional eigenspace consisting of nodal eigenfunctions, among which $e_{k,\eta}\in\s$ s.t.\ $\|e_{k,\eta}\|_\eta=1$, and is given by the inductive formula
\[\lambda_k(\eta)=\inf_{u\in L_{k-1}^\bot(\eta)\setminus\{0\}}\frac{\|u\|^2}{\|u\|_\eta^2},\]
where $L_{k-1}^\bot(\eta)$ is the orthogonal complement of
\[L_{k-1}(\eta)={\rm span}\,\big\{e_{1,\eta},\ldots e_{k-1,\eta}\big\},\]
moreover the infimum above is attained exactly at $\lambda_k(\eta)$-eigenfunctions.
\item\label{spe3} $\lambda_k(\eta)\to\infty$ as $k\to\infty$;
\item\label{spe4} the sequence $(e_{k,\eta})$ is an orthogonal basis of $\s$ and an orthonormal basis of $L^2_\eta(\Omega)$.
\end{enumroman}
\end{proposition}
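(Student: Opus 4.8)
The plan is to reduce \eqref{ev} to a spectral problem for a compact self-adjoint operator on the weighted space $L^2_\eta(\Omega)$. Since $\eta_0\le\eta\le\|\eta\|_\infty$ a.e., the norm $\|\cdot\|_\eta$ is equivalent to $\|\cdot\|_2$, and $\langle\cdot,\cdot\rangle$ is an inner product inducing the norm of $\s$; so by the Riesz representation theorem, for each $g\in L^2_\eta(\Omega)$ there is a unique $S(g)\in\s$ with $\langle S(g),v\rangle=\int_\Omega\eta\,g\,v\,dx$ for all $v\in\s$, and $S\colon L^2_\eta(\Omega)\to\s$ is bounded and linear. Composing with the compact embedding $\s\hookrightarrow L^2_\eta(\Omega)$ gives a linear compact operator $K\colon L^2_\eta(\Omega)\to L^2_\eta(\Omega)$. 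Testing the defining identity of $S(g)$ with $v=S(h)$ (and symmetrically) yields $\langle Kg,h\rangle_\eta=\langle S(g),S(h)\rangle=\langle g,Kh\rangle_\eta$, where $\langle u,v\rangle_\eta=\int_\Omega\eta uv\,dx$ is the inner product of $L^2_\eta(\Omega)$, while $\langle Kg,g\rangle_\eta=\|S(g)\|^2>0$ for $g\ne 0$ (if $S(g)=0$ then $g\bot\s$ in $L^2_\eta(\Omega)$, and $\s$ is dense there). Hence $K$ is self-adjoint, positive and injective.

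Next I would apply the spectral theorem for compact self-adjoint operators: $K$ has a non-increasing sequence of positive eigenvalues $\mu_k\to 0^+$ with associated eigenfunctions $e_{k,\eta}$ which, once normalized, form an orthonormal basis of $L^2_\eta(\Omega)$ (injectivity of $K$ makes the system complete). Unravelling the definition of $K$, a pair $(\mu,g)$ with $\mu>0$ is an eigenpair of $K$ iff $g\in\s$ and $\fl g=\mu^{-1}\eta g$ weakly; so $\sigma(\eta)=\{\mu_k^{-1}\}$, and setting $\lambda_k(\eta)=\mu_k^{-1}$ gives a non-decreasing sequence with $\lambda_k(\eta)\to\infty$, which is \ref{spe3}, each term being isolated in $\sigma(\eta)$ since $0$ is the only accumulation point of $\sigma(K)$. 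Testing $\fl e_{k,\eta}=\lambda_k(\eta)\eta e_{k,\eta}$ against $e_{j,\eta}$ and using the $L^2_\eta$-orthonormality gives $\langle e_{i,\eta},e_{j,\eta}\rangle=\lambda_i(\eta)\delta_{i,j}$, so $(e_{k,\eta})$ is orthogonal in $\s$; it is complete there too, because $\langle u,e_{k,\eta}\rangle=\lambda_k(\eta)\langle u,e_{k,\eta}\rangle_\eta=0$ for all $k$ forces $u=0$. This is \ref{spe4}; expanding $u=\sum_k c_k e_{k,\eta}$ with $c_k=\langle u,e_{k,\eta}\rangle_\eta$ gives $\|u\|^2=\sum_k\lambda_k(\eta)c_k^2$ and $\|u\|_\eta^2=\sum_k c_k^2$, so the Rayleigh quotient $\|u\|^2/\|u\|_\eta^2$ is a weighted average of the $\lambda_k(\eta)$. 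Minimizing this average under the constraints $c_1=\dots=c_{k-1}=0$ — which, by the orthogonality just proved, describe exactly $L_{k-1}^\bot(\eta)$ — yields the variational formulas in \ref{spe1} and \ref{spe2}, attained precisely on the respective eigenspaces (which are finite-dimensional, being eigenspaces of a compact operator).

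It then remains to prove the sign statements. Since $\||u|\|\le\|u\|$ and $\||u|\|_\eta=\|u\|_\eta$, the infimum defining $\lambda_1(\eta)$ is attained at a nonnegative function solving \eqref{ev} with $\lambda=\lambda_1(\eta)$; as $f(x,t)=\lambda_1(\eta)\eta(x)t\ge 0\ge -ct$ for $t\ge 0$, Propositions \ref{reg} and \ref{hl} apply and give $e_{1,\eta}\in{\rm int}\,(C^0_\delta(\overline\Omega)_+)$, in particular $e_{1,\eta}>0$ in $\Omega$. If $u$ is another $\lambda_1(\eta)$-eigenfunction then $|u|$ is again a minimizer, hence a positive eigenfunction, so $u$ has constant sign; given two positive eigenfunctions $u_1,u_2$, the quotient $u_1/u_2$ (read via the quotients $u_i/\delta^s$) extends continuously and positively to $\overline\Omega$, and $w:=u_1-t_0u_2$ with $t_0=\min_{\overline\Omega}(u_1/u_2)>0$ is a nonnegative $\lambda_1(\eta)$-eigenfunction whose $\delta^s$-quotient vanishes somewhere on $\overline\Omega$; Proposition \ref{hl} then forces $w=0$, i.e.\ $u_1=t_0u_2$, so $\lambda_1(\eta)$ is simple. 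The same argument shows that no $u$ orthogonal to $e_{1,\eta}$ in $\s$ can be a $\lambda_1(\eta)$-eigenfunction, whence $\lambda_1(\eta)<\lambda_2(\eta)$; and for $k\ge 2$ every $\lambda_k(\eta)$-eigenfunction is $L^2_\eta$-orthogonal to $e_{1,\eta}>0$ (eigenfunctions of $K$ for distinct eigenvalues are orthogonal), hence changes sign, i.e.\ is nodal.

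I expect the main obstacle to be the simplicity of $\lambda_1(\eta)$: the step from ``$w\ge 0$ and $w$ vanishes somewhere on $\overline\Omega$'' to ``$w\equiv 0$'' genuinely relies on the fine boundary regularity of Proposition \ref{reg}\ref{reg2} and on the fractional Hopf lemma (Proposition \ref{hl}) — the nonlocal substitute for the strong maximum principle — and must be carried out in the weighted Hölder cone $C^0_\delta(\overline\Omega)_+$ rather than in $C^0(\overline\Omega)_+$. Everything else is standard Hilbert-space spectral theory transplanted to the weighted inner product $\langle\cdot,\cdot\rangle_\eta$, along the lines of \cite[Proposition 9]{SV1}, the boundedness of $\eta$ away from $0$ and $\infty$ making the weight harmless throughout.
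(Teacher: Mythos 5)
Your proposal is correct, but it takes a genuinely different route from the paper's. The paper follows the direct variational scheme of \cite[Proposition 9]{SV1}: it minimizes $\|u\|^2$ on the nested $C^1$-manifolds $\mathcal{M}_{k-1}=\{u\in L^\bot_{k-1}(\eta):\,\|u\|_\eta=1\}$, identifies the Lagrange multiplier with the eigenvalue, and then proves each remaining assertion by a separate weak-compactness/contradiction argument (finite-dimensionality of eigenspaces, $\lambda_k(\eta)\to\infty$, completeness of $(e_{k,\eta})$, and, in a final step, the fact that $\sigma(\eta)$ contains nothing besides the $\lambda_k(\eta)$). You instead package everything into the spectral decomposition of the compact, self-adjoint, positive, injective solution operator $K$ on $L^2_\eta(\Omega)$; this yields \ref{spe3}, \ref{spe4}, the finite multiplicities, the isolation of the eigenvalues, the exhaustion of $\sigma(\eta)$, and (via the Fourier expansion of the Rayleigh quotient) the variational formulas of \ref{spe1}--\ref{spe2} all at once, at the price of invoking the spectral theorem rather than only elementary minimization. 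The qualitative part is also handled differently: for the simplicity of $\lambda_1(\eta)$ the paper takes two $L^2_\eta$-normalized positive eigenfunctions, observes that their difference $v=u-e_{1,\eta}$ is again an eigenfunction of constant sign, and derives a contradiction from $\|u\|_\eta=\|e_{1,\eta}\|_\eta$; you subtract the largest admissible multiple $t_0u_2$ from $u_1$ and let the Hopf lemma (Proposition \ref{hl}), applied in the cone $C^0_\delta(\overline\Omega)_+$, force the nonnegative eigenfunction $w=u_1-t_0u_2$, whose $\delta^s$-quotient vanishes at the minimum point, to be identically zero. Both sign arguments rest on the same ingredients (Propositions \ref{reg} and \ref{hl}) and both are sound; yours makes the role of the boundary Hopf lemma more explicit, while the paper's avoids discussing the quotient $u_1/u_2$ up to $\partial\Omega$.
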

\begin{proof}
We prove \ref{spe1}. First we rephrase the definition of $\lambda_1(\eta)$ in the following equivalent form:
\[\lambda_1(\eta)=\inf_{u\in\mathcal{M}}\|u\|^2,\]
where
\[\mathcal{M}=\big\{u\in\s:\,\|u\|_\eta^2=1\big\}\]
is a $C^1$-manifold in $\s$. We prove that $\lambda_1(\eta)$ is attained on $\mathcal{M}$. Indeed, let $(u_n)$ be a minimizing sequence for $\lambda_1(\eta)$, in particular $(u_n)$ is bounded in $\s$, so passing to a subsequence we have $u_n\rightharpoonup\hat u_{1,\eta}$ in $\s$ and $u_n\to\hat u_{1,\eta}$ in $L^2(\Omega)$. Then clearly $\hat u_{1,\eta}\in\mathcal{M}$ and by convexity
\[\|\hat u_{1,\eta}\|^2\le\liminf_n\|u_n\|^2,\]
hence $\|\hat u_{1,\eta}\|^2=\lambda_1(\eta)$. By the Lagrange multiplier rule, we can find $\mu\in\R$ s.t.\ for all $v\in\s$
\[\langle\hat u_{1,\eta},v\rangle=\mu\int_\Omega\eta(x)\hat u_{1,\eta}v\,dx.\]
Choosing $v=\hat u_{1,\eta}$ in the identity above yields $\mu=\lambda_1(\eta)$, so we deduce that $\lambda_1(\eta)\in\sigma(\eta)$ with $\hat u_{1,\eta}$ as an associated eigenfunction. Since $\| |u| \|\le\|u\|$ for all $u\in\s$, we may assume $\hat u_{1,\eta}(x)\ge 0$ for a.e.\ $x\in\Omega$. Then, by Proposition \ref{hl}, we get $\hat u_{1,\eta}\in{\rm int}\,(C^0_\delta(\overline\Omega)_+)$. We normalize in $L^2_\eta(\Omega)$ by setting
\[e_{1,\eta}=\frac{\hat u_{1,\eta}}{\|\hat u_{1,\eta}\|_\eta}\in\s\cap{\rm int}\,(C^0_\delta(\overline\Omega)_+).\]
We prove now simplicity of the first eigenvalue, i.e., that $e_{1,\eta}$ is unique, arguing by contradiction: let $u\in\s\setminus\{e_{1,\eta}\}$ be another positive, $L^2_\eta(\Omega)$-normalized $\lambda_1(\eta)$-eigenfunction. Then $v=u-e_{1,\eta}$ is a $\lambda_1(\eta)$-eigenfunction as well, and as above it is shown to have constant sign in $\Omega$. For instance, let $v(x)\ge 0$ for all $x\in\Omega$, i.e., $u(x)\ge e_{1,\eta}(x)$ for all $x\in\Omega$, which implies
\[0<\int_\Omega\eta(x)(u^2-e^2_{1,\eta})\,dx=\|u\|_\eta^2-\|e_{1,\eta}\|_\eta^2=0,\]
a contradiction. So the eigenspace associated to $\lambda_1(\eta)$ is
\[L_1(\eta)={\rm span}\,\{e_{1,\eta}\},\]
consisting of constant sign functions. We shall see later that $\lambda_1(\eta)$ is isolated.
\vskip2pt
\noindent
Before going on, we need the following simple property: if $\lambda,\mu\in\sigma(\eta)$ with $\lambda\neq\mu$ and $u,v\in\s$ are a $\lambda$-eigenfunction and a $\mu$-eigenfunction, respectively, then
\beq\label{ort}
\langle u,v\rangle=\int_\Omega\eta(x) uv\,dx=0.
\eeq
Indeed, by the weak form of \eqref{ev} we have
\[\lambda\int_\Omega\eta(x)uv\,dx = \langle u,v\rangle = \mu\int_\Omega\eta(x)uv\,dx,\]
which by $\lambda\neq\mu$ implies
\[\int_\Omega\eta(x)uv\,dx = 0.\]
This in turn yields orthogonality in $\s$ by \eqref{ev} again.
\vskip2pt
\noindent
We prove \ref{spe2}, arguing inductively: let $k\ge 2$ and assume that $\lambda_1(\eta),\ldots \lambda_{k-1}(\eta)$ are already defined along with their eigenfunctions $e_{1,\eta},\ldots e_{k-1,\eta}$. Define $L_{k-1}(\eta)$ as above and set
\[\lambda_k(\eta)=\inf_{u\in\mathcal{M}_{k-1}}\|u\|^2,\]
where
\[\mathcal{M}_{k-1}=\big\{u\in L^\bot_{k-1}(\eta):\,\|u\|_\eta^2=1\big\}\]
is a $C^1$-manifold in the Hilbert space $L^\bot_{k-1}(\eta)$. Arguing as above we see that $\lambda_k(\eta)\in\sigma(\eta)$ with an associated, $L^2_\eta(\Omega)$-normalized eigenfunction $e_{k,\eta}\in L^\bot_{k-1}(\eta)$. Clearly, since $L^\bot_{k-1}(\eta)\subseteq L^\bot_{k-2}(\eta)$ we have $\lambda_k(\eta)\ge\lambda_{k-1}(\eta)$.
\vskip2pt
\noindent
Strict monotonicity can only be proved for $k=2$. Indeed, if we assume $\lambda_1(\eta)=\lambda_2(\eta)$, then there exists $\tau\in\R$ s.t.\ $e_{2,\eta}=\tau e_{1,\eta}\neq 0$, against $e_{2,\eta}\in L^\bot_1(\eta)$.
\vskip2pt
\noindent
Thus, for all $k\ge 2$ we have
\[\lambda_k(\eta)\ge\lambda_2(\eta)>\lambda_1(\eta)\]
and by \eqref{ort} we have
\[\int_\Omega\eta(x) e_{k,\eta}e_{1,\eta}\,dx=0,\]
which implies that $e_{k,\eta}$ is nodal. We prove that the $\lambda_k(\eta)$-eigenspace is finite-dimensional, arguing by contradiction: assume that $(u_n)$ is a sequence of linearly independent $\lambda_k(\eta)$-eigenfunctions. Since $\s$ is a Hilbert space, we may assume without loss of generality that for all integer $n\neq m$
\[\|u_n\|^2=\lambda_k(\eta), \ \|u_n\|_\eta^2=1, \ \int_\Omega\eta(x) u_n u_m\,dx=0.\]
In particular, $(u_n)$ is bounded in $\s$. Passing to a subsequence, $(u_n)$ is convergent in $L^2_\eta(\Omega)$, i.e., it is a Cauchy sequence in $L^2_\eta(\Omega)$. But for all $n\neq m$ we have
\[\|u_n-u_m\|_\eta^2=\|u_n\|_\eta^2+\|u_m\|_\eta^2=2,\]
a contradiction. Thus the $\lambda_k(\eta)$-eigenspace has infinite dimension.
\vskip2pt
\noindent
We prove \ref{spe3}, arguing by contradiction: assume that the sequence $(\lambda_k(\eta))$ is bounded in $\R$, then $(e_{k,\eta})$ is bounded in $\s$ and as in the proof of \ref{spe2} we reach a contradiction.
\vskip2pt
\noindent
Finally we prove \ref{spe4}. Orthogonality in $\s$ and orthonormality in $L^2_\eta(\Omega)$ follow from \eqref{ort}. To see that $(e_{k,\eta})$ is a basis for $\s$ it suffices to show that if $v\in\s$ is s.t.\ for all $k\ge 1$
\[\langle e_{k,\eta},v\rangle=0,\]
then $v=0$. We argue by contradiction, assuming on the contrary that there exists $v\in{\rm span}\,(e_{k,\eta})^\bot$ s.t. $\|v\|_\eta=1$. By \ref{spe3} there exists $h\ge 1$ s.t.\ $\lambda_h(\eta)>\|v\|^2$, which by \ref{spe2} implies $v\notin L^\bot_{h-1}(\eta)$, a contradiction. An easy density argument shows that $(e_{k,\eta})$ is basis for $L^2_\eta(\Omega)$ as well.
\vskip2pt
\noindent
To conclude the proof, it remains to prove that there is no other eigenvalue than the elements of the sequence $(\lambda_k(\eta))$. We argue by contradiction, assuming that there exists $\lambda\in\sigma(\eta)$ s.t.\ $\lambda\neq\lambda_k(\eta)$ for all $k\ge 1$. Let $u\in\s$ be a $\lambda$-eigenfunction, then we have
\[\|u\|^2=\lambda\|u\|_\eta^2.\]
By \ref{spe1} we have $\lambda>\lambda_1$, hence there exists $k\ge 1$ s.t.\ $\lambda_k(\eta)<\lambda<\lambda_{k+1}(\eta)$. We claim that $u\notin L^\bot_k(\eta)$. Indeed, otherwise by \ref{spe2} we have
\[\frac{\|u\|^2}{\|u\|_\eta^2}\ge\lambda_{k+1}(\eta)>\lambda,\]
a contradiction. Thus, there exists $j\in\{1,\ldots k\}$ s.t.
\[\langle e_{j,\eta},u\rangle\neq 0,\]
against \eqref{ort}. In particular, we have $\sigma(\eta)\cap(\lambda_1(\eta),\lambda_2(\eta))=\emptyset$, i.e., $\lambda_1(\eta)$ is an isolated element of $\sigma(\eta)$. That concludes the proof.
\end{proof}

\begin{remark}
We count eigenvalues with their multiplicity, so for $k\ge 2$ we may have
\[\lambda_k(\eta)=\lambda_{k+1}(\eta)=\ldots=\lambda_{k+h}(\eta),\]
each integer from $k$ to $k+h$ corresponding to a new, linearly independent eigenfunction. Also, note that all issues in Proposition \ref{spe} above can be equivalently stated in $L^2(\Omega)$ (with the ordinary norm), but the orthogonality in \ref{spe3}.
\end{remark}

\noindent
When $\eta\equiv 1$ we will just write $\sigma(1)=\sigma$, $\lambda_k(1)=\lambda_k$, and $e_{k,1}=e_k$. If the weight function is smooth enough we have the so-called {\em unique continuation property (u.c.p.)} for eigenfunctions (a consequence of \cite[Theorem 1.3]{FF}):

\begin{proposition}\label{ucp}
Let $\eta\in L^\infty(\Omega)\cap C^1(\Omega)$ be s.t.\ $\eta(x)\ge\eta_0>0$ for all $x\in\Omega$, moreover let $\lambda\in\sigma(\eta)$ and $u\in\s\setminus\{0\}$ be a $\lambda$-eigenfunction. Then the set
\[\big\{x\in\Omega:\,u(x)=0\big\}\]
has zero measure.
\end{proposition}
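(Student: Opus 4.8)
The plan is to deduce the statement from the unique continuation principle of Fall and Felli. First I would rewrite the eigenvalue equation in the form $\fl u=V(x)\,u$ in $\Omega$, with potential $V=\lambda\eta$. By the assumptions on $\eta$ we have $V\in L^\infty(\Omega)\cap C^1(\Omega)$ with $V\ge\lambda\eta_0>0$, and on every ball compactly contained in $\Omega$ the gradient $\nabla V$ is bounded, so $V$ belongs to the class of potentials for which \cite[Theorem 1.3]{FF} is available. Recall, moreover, that by Proposition \ref{reg} the eigenfunction $u$ lies in $C^{1,\beta}(\Omega)$ and solves the equation in the pointwise sense, so that the nodal set $Z=\{x\in\Omega:\,u(x)=0\}$ is a well-defined, relatively closed subset of $\Omega$; since $u\neq 0$, our task is to show $|Z|=0$. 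Note that the vanishing of $u$ on the exterior $\Omega^c$ (a set of infinite measure) is not in conflict with $u\neq 0$, since the equation holds only inside $\Omega$: unique continuation is here an interior phenomenon, taking place where the equation is in force.

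Next I would argue by contradiction, assuming $|Z|>0$, and pick --- via the Lebesgue density theorem --- a point $x_0\in Z$ having density one in $Z$. As a preliminary observation, $\nabla u(x_0)=0$: otherwise, by continuity, $\{u=0\}$ would coincide near $x_0$ with a $C^1$ hypersurface, which is Lebesgue-null, contradicting the density-one property. I would then invoke the local asymptotic analysis of \cite[Theorem 1.3]{FF} (equivalently, of the associated Caffarelli--Silvestre extended problem, see \cite{CS1}): either $u$ vanishes at $x_0$ to infinite order, in which case the strong unique continuation part of that theorem already forces $u\equiv 0$, absurd; or the rescalings of $u$ around $x_0$, after a suitable normalization, converge as the scale tends to $0$ to a nontrivial homogeneous profile $\Psi$. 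In the second alternative, since $x_0$ has density one in $Z$, the zero sets of the rescaled functions fill a fixed ball $B$ centered at the origin up to subsets of vanishing measure; passing this property to the limit --- the blow-up convergence of \cite{FF} being strong enough to do so --- yields $\Psi=0$ a.e.\ on $B$, hence $\Psi\equiv 0$ by homogeneity, contradicting the nontriviality of $\Psi$. Therefore $|Z|=0$.

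The delicate point is precisely this last transfer: one has to check that the density-one hypothesis at $x_0$ survives the blow-up normalization used in \cite{FF} and is genuinely incompatible with a nonzero homogeneous limit. If, instead, \cite[Theorem 1.3]{FF} is already phrased as a unique continuation property valid for solutions vanishing on sets of positive measure, within the class of potentials that includes $V=\lambda\eta$, then the second paragraph collapses to a one-line citation, and all that remains is to verify that $V=\lambda\eta$ meets the hypotheses there and that $u$ enjoys the interior regularity supplied by Proposition \ref{reg}.
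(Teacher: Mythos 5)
Your closing paragraph is exactly what the paper does: Proposition \ref{ucp} is given \emph{no} proof there, being introduced as "a consequence of \cite[Theorem 1.3]{FF}", and that theorem is already phrased as unique continuation from sets of positive measure for solutions of $\fl u=Vu$ with an admissible potential, so the proposition follows by direct citation once one checks that $V=\lambda\eta$ (bounded, $C^1$ inside $\Omega$) qualifies and that $u$ has the interior regularity of Proposition \ref{reg}. The blow-up reconstruction in your second paragraph is therefore superfluous, and you should not present it as a proof on its own: the dichotomy (infinite vanishing order versus convergence to a nontrivial homogeneous profile) and the mode of convergence strong enough to pass the density-one property of the nodal set to the limit are precisely the content of the Almgren-type monotonicity analysis carried out in \cite{FF}, so that paragraph amounts to re-deriving the cited theorem rather than applying it.
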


\noindent
As in the classical case (see \cite{DG}), Proposition \ref{ucp} can be used to prove monotonicity of the mapping $\eta\mapsto\lambda_k(\eta)$, for any $k\ge 1$:

\begin{proposition}\label{mon}
Let $\eta_1,\eta_2\in L^\infty(\Omega)$ be s.t.\ $\eta_2(x)\ge\eta_1(x)\ge\eta_0>0$ for a.e.\ $x\in\Omega$, $\eta_1\not\equiv\eta_2$, and either $\eta_1\in C^1(\Omega)$ or $\eta_2\in C^1(\Omega)$. Then, $\lambda_k(\eta_1)>\lambda_k(\eta_2)$ for all $k\ge 1$.
\end{proposition}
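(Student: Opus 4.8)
The plan is to use the variational characterizations of $\lambda_k(\eta_1)$ and $\lambda_k(\eta_2)$ from Proposition \ref{spe}, together with the unique continuation property (Proposition \ref{ucp}). The non-strict inequality $\lambda_k(\eta_1)\ge\lambda_k(\eta_2)$ is immediate, and the whole point is to upgrade it to a strict one. First I would dispose of $k=1$, where the minimax formula is a plain Rayleigh quotient: from $\lambda_1(\eta_1)=\inf_{u\neq 0}\|u\|^2/\|u\|_{\eta_1}^2$ and $\|u\|_{\eta_1}^2\le\|u\|_{\eta_2}^2$ for all $u$ (with $\le$ replaced by $<$ once $u$ does not vanish on the set where $\eta_1<\eta_2$), testing the $\lambda_1(\eta_1)$-quotient with the first $\eta_1$-eigenfunction $e_{1,\eta_1}$, which is strictly positive by Proposition \ref{spe}\ref{spe1} and hence does not vanish on a positive-measure set, gives $\lambda_1(\eta_2)\le\|e_{1,\eta_1}\|^2/\|e_{1,\eta_1}\|_{\eta_2}^2<\|e_{1,\eta_1}\|^2/\|e_{1,\eta_1}\|_{\eta_1}^2=\lambda_1(\eta_1)$.

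For general $k$ the inductive Rayleigh characterization of Proposition \ref{spe}\ref{spe2} is awkward because the subspace $L_{k-1}^\bot(\eta)$ over which one infimizes itself depends on $\eta$. The remedy is the standard Courant–Fischer minimax formula
\[
\lambda_k(\eta)=\inf_{\substack{E\subseteq\s\\ \dim E=k}}\ \sup_{u\in E\setminus\{0\}}\frac{\|u\|^2}{\|u\|_\eta^2},
\]
which I would first justify for our weighted operator. One inclusion follows by taking $E=L_k(\eta)=\mathrm{span}\{e_{1,\eta},\dots,e_{k,\eta}\}$ and expanding $u=\sum_{j=1}^k c_j e_{j,\eta}$: using orthogonality in $\s$ and orthonormality in $L^2_\eta(\Omega)$ from Proposition \ref{spe}\ref{spe4} one gets $\|u\|^2/\|u\|_\eta^2=\sum c_j^2\lambda_j(\eta)/\sum c_j^2\le\lambda_k(\eta)$, so the infimum over $k$-dimensional subspaces is $\le\lambda_k(\eta)$. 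For the reverse inequality, given any $E$ with $\dim E=k$, a dimension count shows $E\cap L_{k-1}^\bot(\eta)\neq\{0\}$, and any nonzero $u$ in that intersection satisfies $\|u\|^2/\|u\|_\eta^2\ge\lambda_k(\eta)$ by Proposition \ref{spe}\ref{spe2}; hence $\sup_{u\in E}\|u\|^2/\|u\|_\eta^2\ge\lambda_k(\eta)$.

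Armed with this, I would finish as follows. Choose the optimal subspace $E=L_k(\eta_1)$ for $\eta_1$, so that $\sup_{u\in E\setminus\{0\}}\|u\|^2/\|u\|_{\eta_1}^2=\lambda_k(\eta_1)$, the supremum being attained at some $\bar u\in E\setminus\{0\}$ (the quotient restricted to the finite-dimensional $E$ is continuous and $0$-homogeneous, so attains its max on the unit sphere of $E$). Then
\[
\lambda_k(\eta_2)\le\sup_{u\in E\setminus\{0\}}\frac{\|u\|^2}{\|u\|_{\eta_2}^2}\le\frac{\|\bar u\|^2}{\|\bar u\|_{\eta_2}^2}.
\]
Now the maximizer $\bar u$ is a nonzero element of $L_k(\eta_1)$; writing $\bar u=\sum_{j=1}^k c_j e_{j,\eta_1}$ with not all $c_j$ zero, I claim $\bar u$ cannot vanish on a set of positive measure. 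Indeed, if $k\ge 2$ it need not be a single eigenfunction, so I cannot apply Proposition \ref{ucp} directly to $\bar u$; instead I would argue that the maximum of the $\eta_1$-Rayleigh quotient over $E$ equals $\lambda_k(\eta_1)$ only when $\lambda_j(\eta_1)=\lambda_k(\eta_1)$ for every index $j$ with $c_j\neq0$, i.e.\ $\bar u$ is itself a $\lambda_k(\eta_1)$-eigenfunction, and then Proposition \ref{ucp} applies and forces $|\{\bar u=0\}|=0$. Hence $\eta_2>\eta_1$ on a positive-measure subset of $\{\bar u\neq0\}$, giving the strict inequality $\|\bar u\|_{\eta_2}^2>\|\bar u\|_{\eta_1}^2$, and therefore
\[
\lambda_k(\eta_2)\le\frac{\|\bar u\|^2}{\|\bar u\|_{\eta_2}^2}<\frac{\|\bar u\|^2}{\|\bar u\|_{\eta_1}^2}=\lambda_k(\eta_1).
\]

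The main obstacle is precisely this last point: extracting from ``$\bar u$ maximizes the $\eta_1$-quotient over the optimal subspace $L_k(\eta_1)$'' the conclusion that $\bar u$ is a genuine $\lambda_k(\eta_1)$-eigenfunction, so that the unique continuation property of Proposition \ref{ucp} can be invoked. If $\lambda_k(\eta_1)$ is a simple eigenvalue this is trivial; in the degenerate case one needs the elementary fact that $\sum c_j^2\lambda_j(\eta_1)=\lambda_k(\eta_1)\sum c_j^2$ with $\lambda_j(\eta_1)\le\lambda_k(\eta_1)$ forces $c_j=0$ whenever $\lambda_j(\eta_1)<\lambda_k(\eta_1)$, which confines $\bar u$ to the $\lambda_k(\eta_1)$-eigenspace. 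I would also double-check that the hypothesis ``$\eta_1\in C^1(\Omega)$ or $\eta_2\in C^1(\Omega)$'' is used in the right place: if $\eta_1\in C^1(\Omega)$ we apply Proposition \ref{ucp} to the $\eta_1$-eigenfunction $\bar u$ as above; if instead only $\eta_2\in C^1(\Omega)$, I would run the symmetric argument, starting from the optimal subspace $L_k(\eta_2)$ and a $\lambda_k(\eta_2)$-eigenfunction maximizing the $\eta_2$-quotient, obtaining $\lambda_k(\eta_1)\ge\|\bar v\|^2/\|\bar v\|_{\eta_1}^2>\|\bar v\|^2/\|\bar v\|_{\eta_2}^2=\lambda_k(\eta_2)$.
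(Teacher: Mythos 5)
Your overall strategy — a Courant--Fischer characterization over $k$-dimensional subspaces, evaluation on the optimal subspace for $\eta_1$, and unique continuation (Proposition \ref{ucp}) to upgrade the comparison to a strict one — is the same as the paper's, which uses the equivalent reciprocal form \eqref{alt}. However, your key chain of inequalities contains a step that fails. With $\bar u$ the maximizer of the $\eta_1$-quotient over $E=L_k(\eta_1)$, you write
\[
\lambda_k(\eta_2)\le\sup_{u\in E\setminus\{0\}}\frac{\|u\|^2}{\|u\|_{\eta_2}^2}\le\frac{\|\bar u\|^2}{\|\bar u\|_{\eta_2}^2},
\]
and the second inequality goes the wrong way: $\bar u$ maximizes the $\eta_1$-quotient, not the $\eta_2$-quotient, so in general one only has $\sup_{u\in E}\|u\|^2/\|u\|_{\eta_2}^2\ge\|\bar u\|^2/\|\bar u\|_{\eta_2}^2$. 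The point of $E$ where the $\eta_2$-quotient attains its supremum may differ from $\bar u$, and for that point you have shown neither that it is a $\lambda_k(\eta_1)$-eigenfunction (so u.c.p.\ is not available) nor that its $\eta_1$-quotient lies strictly below $\lambda_k(\eta_1)$; hence strictness is not established where it is actually needed. (Your $k=1$ argument, your identification of $\eta_1$-quotient maximizers in $L_k(\eta_1)$ as genuine $\lambda_k(\eta_1)$-eigenfunctions via the coefficient expansion, and your remark on which weight must be $C^1$ are all fine.)

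The gap is fixable, and the fix is exactly the paper's argument: treat \emph{every} element of the optimal subspace, not only the $\eta_1$-maximizer. For each $u\in E$ with $\|u\|=1$, either $\int_\Omega\eta_1 u^2\,dx$ equals the extremal value $1/\lambda_k(\eta_1)$ — then $u$ is a $\lambda_k(\eta_1)$-eigenfunction, has u.c.p., and $\int_\Omega\eta_1 u^2\,dx<\int_\Omega\eta_2 u^2\,dx$ — or it is strictly larger, in which case $\int_\Omega\eta_2 u^2\,dx\ge\int_\Omega\eta_1 u^2\,dx>1/\lambda_k(\eta_1)$ anyway. Thus $\int_\Omega\eta_2 u^2\,dx>1/\lambda_k(\eta_1)$ for all unit-norm $u\in E$, and since $E$ is finite-dimensional its unit sphere is compact, so the strict inequality survives passage to the infimum, giving $1/\lambda_k(\eta_1)<\inf_{u\in E,\,\|u\|=1}\int_\Omega\eta_2 u^2\,dx\le 1/\lambda_k(\eta_2)$. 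Equivalently, in your formulation: let $\bar v$ realize the $\eta_2$-sup over $E$; either its $\eta_1$-quotient is $<\lambda_k(\eta_1)$ (done, since $\|\bar v\|_{\eta_2}\ge\|\bar v\|_{\eta_1}$), or it equals $\lambda_k(\eta_1)$, and then your coefficient argument applies to $\bar v$, making it a $\lambda_k(\eta_1)$-eigenfunction to which Proposition \ref{ucp} applies, so $\|\bar v\|_{\eta_2}^2>\|\bar v\|_{\eta_1}^2$ and the strict inequality follows. One further minor difference: the paper takes the minimax formula \eqref{alt} as known (asserting its equivalence with Proposition \ref{spe}), whereas you sketch its proof; that extra work is correct and harmless.
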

\begin{proof}
Assume that $\eta_1\in C^1(\Omega)$ (the other case is treated similarly), so by Proposition \ref{ucp} all $\lambda_k(\eta_1)$-eigenfunctions have u.c.p. We recall the following variational characterization of $\eta$-eigenvalues, which is equivalent to that of Proposition \ref{spe}:
\beq\label{alt}
\frac{1}{\lambda_k(\eta_i)}=\sup_{F\in\mathcal{F}_k}\,\inf_{u\in F,\,\|u\|=1}\int_\Omega\eta_i(x)u^2\,dx \ (i=1,2),
\eeq
where $\mathcal{F}_k$ denotes the family of all $k$-dimensional linear subspaces of $\s$. The supremum in \eqref{alt} (with $i=1$) is attained at a certain subspace $F\in\mathcal{F}_k$, namely
\[\frac{1}{\lambda_k(\eta_1)}=\inf_{u\in F,\,\|u\|=1}\int_\Omega\eta_1(x)u^2\,dx.\]
For any $u\in F$, $\|u\|=1$, two cases may occur:
\begin{itemize}[leftmargin=1cm]
\item[$(a)$] either
\[\frac{1}{\lambda_k(\eta_1)}=\int_\Omega\eta_1(x)u^2\,dx,\]
then $u$ is a $\lambda_k(\eta_1)$-eigenfunction, enjoying u.c.p., hence by the assumptions on $\eta_1$ and $\eta_2$ we have
\[\frac{1}{\lambda_k(\eta_1)}=\int_\Omega\eta_1(x)u^2\,dx<\int_\Omega\eta_2(x)u^2\,dx;\]
\item[$(b)$] or simply
\[\frac{1}{\lambda_k(\eta_1)}<\int_\Omega\eta_1(x)u^2\,dx\le\int_\Omega\eta_2(x)u^2\,dx.\]
\end{itemize}
In both cases we have
\[\frac{1}{\lambda_k(\eta_1)}<\int_\Omega\eta_2(x)u^2\,dx\]
for all $u\in F$, $\|u\|=1$. By compactness (recall that $F$ is finite-dimensional) and using \eqref{alt} with $i=2$, we have
\[\frac{1}{\lambda_k(\eta_1)}<\inf_{u\in F,\,\|u\|=1}\int_\Omega\eta_2(x)u^2\,dx\le\frac{1}{\lambda_k(\eta_2)},\]
i.e., $\lambda_k(\eta_1)>\lambda_k(\eta_2)$.
\end{proof}

\noindent
We conclude this section with a result on critical groups of the functional $\psi_{\eta,\lambda}$:

\begin{proposition}\label{evcg}
Let $\eta\in L^\infty(\Omega)$ be s.t.\ $\eta(x)\ge\eta_0>0$ for a.e.\ $x\in\Omega$, $k\ge 1$ s.t.\ $\lambda_k(\eta)<\lambda_{k+1}(\eta)$, and $\lambda\in(\lambda_k(\eta),\lambda_{k+1}(\eta))$. Then for all $\ge 0$
\begin{enumroman}
\item\label{evcg1} $C_q(\psi_{\eta,\lambda},0)=\delta_{q,k}\R$;
\item\label{evcg2} $C_q(\psi_{\eta,\lambda},\infty)=\delta_{q,k}\R$.
\end{enumroman}
\end{proposition}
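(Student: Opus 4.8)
The plan is to exploit the fact that $\psi_{\eta,\lambda}$ is a purely quadratic functional whose second differential is, thanks to the strict spectral gap $\lambda_k(\eta)<\lambda<\lambda_{k+1}(\eta)$, non-degenerate with finite Morse index $k$; both critical groups are then forced by this structure. First I would set up an orthogonal splitting of $\s$ adapted to $\psi_{\eta,\lambda}$. By Proposition \ref{spe}\ref{spe4} the normalized eigenfunctions $(e_{j,\eta})$ form an orthogonal basis of $\s$ and an orthonormal basis of $L^2_\eta(\Omega)$, and testing the weak form of \eqref{ev} with $e_{j,\eta}$ gives $\|e_{j,\eta}\|^2=\lambda_j(\eta)$. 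Hence, writing $u=\sum_j c_j e_{j,\eta}\in\s$ (series convergent in $\s$), one has $\|u\|^2=\sum_j\lambda_j(\eta)c_j^2$ and $\|u\|_\eta^2=\sum_j c_j^2$, so that
\[\psi_{\eta,\lambda}(u)=\frac12\sum_j\big(\lambda_j(\eta)-\lambda\big)c_j^2 .\]
Set $H_-={\rm span}\,\{e_{1,\eta},\dots,e_{k,\eta}\}$ (so $\dim H_-=k$) and $H_+=L_k^\bot(\eta)=\overline{{\rm span}}\,\{e_{j,\eta}:j>k\}$; then $\s=H_-\oplus H_+$, the summands being orthogonal both in $\s$ and in $L^2_\eta(\Omega)$, whence $\psi_{\eta,\lambda}(u_-+u_+)=\psi_{\eta,\lambda}(u_-)+\psi_{\eta,\lambda}(u_+)$ with no cross terms. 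From the displayed identity, using $\lambda_j(\eta)\le\lambda_k(\eta)<\lambda$ for $j\le k$ and $\lambda_j(\eta)\ge\lambda_{k+1}(\eta)>\lambda$ for $j>k$, I would derive
\[\psi_{\eta,\lambda}(u)\le\frac{\lambda_k(\eta)-\lambda}{2\lambda_k(\eta)}\,\|u\|^2\ \ (u\in H_-),\qquad \psi_{\eta,\lambda}(u)\ge\frac12\Big(1-\frac{\lambda}{\lambda_{k+1}(\eta)}\Big)\|u\|^2\ \ (u\in H_+),\]
the first leading constant being negative and the second positive.

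\emph{Proof of \ref{evcg1}.} These estimates show that $0$ is a non-degenerate critical point of the $C^2$ functional $\psi_{\eta,\lambda}$, with negative space $H_-$ of dimension $k$ and positive space $H_+$; the standard formula for the critical groups at such a point (see \cite{C,MMP}) then gives $C_q(\psi_{\eta,\lambda},0)=\delta_{q,k}\R$ for all $q\ge0$. If a direct argument is preferred, one takes $U$ to be a ball about $0$ and uses the deformation $(u_-,u_+)\mapsto(u_-,(1-t)u_+)$: by the two estimates it keeps $U$ and the sublevel set $\{\psi_{\eta,\lambda}\le0\}$ invariant, and it retracts $\{\psi_{\eta,\lambda}\le0\}\cap U$ onto $H_-\cap U$; moreover every nonzero point of $\{\psi_{\eta,\lambda}\le0\}\cap U$ has nonzero $H_-$-component, so the deformation also retracts the punctured set onto $(H_-\cap U)\setminus\{0\}$. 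Hence $C_q(\psi_{\eta,\lambda},0)=H_q\big(H_-\cap U,(H_-\cap U)\setminus\{0\}\big)=\delta_{q,k}\R$, the last equality because $\dim H_-=k$.

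\emph{Proof of \ref{evcg2}.} I would first check that $\psi_{\eta,\lambda}$ satisfies \eqref{c} and that $K(\psi_{\eta,\lambda})=\{0\}$. Indeed $\psi_{\eta,\lambda}'(u)=0$ forces $(\lambda_j(\eta)-\lambda)c_j=0$ for every $j$, hence $u=0$ since $\lambda\notin\sigma(\eta)$; and $\psi_{\eta,\lambda}'$ is the Riesz representation of a boundedly invertible operator (its eigenvalues $1-\lambda/\lambda_j(\eta)$ are bounded away from $0$, as they accumulate only at $1$), so $(1+\|u_n\|)\psi_{\eta,\lambda}'(u_n)\to0$ gives $u_n\to0$. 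In particular $\inf_{K(\psi_{\eta,\lambda})}\psi_{\eta,\lambda}=0>-\infty$, so $C_q(\psi_{\eta,\lambda},\infty)$ is well defined, and Proposition \ref{mr}\ref{mr1} (with $u_0=0$), combined with \ref{evcg1}, yields $C_q(\psi_{\eta,\lambda},\infty)=C_q(\psi_{\eta,\lambda},0)=\delta_{q,k}\R$.

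The only point that needs care is that the $\s$-orthogonal splitting $H_-\oplus H_+$ simultaneously diagonalizes the weighted form $u\mapsto\|u\|_\eta^2$, so that $\psi_{\eta,\lambda}$ genuinely decouples — this is exactly the content of Proposition \ref{spe}\ref{spe4} together with \eqref{ort} — while the essential structural hypothesis is the strict gap $\lambda_k(\eta)<\lambda<\lambda_{k+1}(\eta)$, which makes $0$ non-degenerate; for a resonant $\lambda$ the computation of $C_q(\psi_{\eta,\lambda},0)$ would be considerably more delicate. Beyond this bookkeeping I do not expect a genuine obstacle.
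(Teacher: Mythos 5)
Your proposal is correct and follows essentially the same route as the paper: part \ref{evcg1} is the critical-group formula for a non-degenerate critical point of Morse index $k$ (the paper simply cites \cite[Theorem 6.51]{MMP}, whereas you unpack it via the spectral splitting $H_-\oplus H_+$ and a deformation), and part \ref{evcg2} follows from $K(\psi_{\eta,\lambda})=\{0\}$ together with Proposition \ref{mr} \ref{mr1}, exactly as in the paper. Your explicit two-sided estimates on $H_\pm$ and the verification that $\psi_{\eta,\lambda}'$ is boundedly invertible are sound, so no gaps.
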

\begin{proof}
By Proposition \ref{ev} we know that $\lambda\notin\sigma(\eta)$. So $K(\psi_{\eta,\lambda})=\{0\}$, and the Morse index of $\psi_{\eta,\lambda}$ at $0$ is $k$. Thus, by \cite[Theorem 6.51]{MMP} (see also \cite[Proposition 2.3]{S}) we have for all $q\ge 0$
\[C_q(\psi_{\eta,\lambda},0)=\delta_{q,k}\R,\]
which proves \ref{evcg1}. By Proposition \ref{mr} \ref{mr1}, we also have for all $q\ge 0$
\[C_q(\psi_{\eta,\lambda},\infty)=\delta_{q,k}\R,\]
which proves \ref{evcg2}.
\end{proof}

\section{Existence result}\label{sec3}

\noindent
This section is devoted to proving our existence result for problem \eqref{dir}. In this connection, our hypotheses on the reaction $f$ are the following:

\begin{itemize}[leftmargin=1cm]
\item[${\bf H}_1$] $f:\Omega\times\R\to\R$ is s.t.\ $f(\cdot,t)$ is measurable in $\Omega$ for all $t\in\R$, $f(x,\cdot)\in C^1(\R)$ for a.e.\ $x\in\Omega$, we set for all $(x,t)\in\Omega\times\R$
\[F(x,t)=\int_0^t f(x,\tau)\,d\tau.\]
Morerover:
\begin{enumroman}
\item\label{h11} for all $\rho>0$ there exists $a_\rho\in L^\infty(\Omega)_+$ s.t.\ $|f(x,t)|\le a_\rho(x)$ for a.e.\ $x\in\Omega$ and all $|t|\le\rho$;
\item\label{h12} $\displaystyle\lim_{|t|\to\infty}\big(f(x,t)t-2F(x,t)\big)=+\infty$ uniformly for a.e.\ $x\in\Omega$;
\item\label{h13} there exists an integer $k\ge 1$ s.t.\ uniformly for a.e.\ $x\in\Omega$
\[\lambda_k\le\liminf_{|t|\to \infty}\frac{f(x,t)}{t}\le\limsup_{|t|\to\infty}\frac{f(x,t)}{t}\le\lambda_{k+1};\]
\item\label{h14} there exist an integer $h\ge 1$, s.t.\ $\lambda_k\neq\lambda_h$, functions $\eta_1,\eta_2\in L^\infty(\Omega)$ s.t.\ $\lambda_h\le\eta_1(x)\le\eta_2(x)\le\lambda_{h+1}$ for a.e.\ $x\in\Omega$, with $\eta_1\not\equiv\lambda_h$, $\eta_2\not\equiv\lambda_{h+1}$, and $\delta_0>0$ s.t.\ for a.e.\ $x\in\Omega$ and all $|t|\le\delta_0$
\[\eta_1(x)\le\frac{f(x,t)}{t}\le\eta_2(x).\]
\end{enumroman}
\end{itemize}

\begin{example}
Let $f\in C^1(\R)$ be defined by
\[f(t)=\begin{cases}
\lambda_k t-(\mu-\lambda_k)\Big(\frac{1}{2}\ln|t|+\sqrt{|t|}\big) & \text{if $t<-1$} \\
\mu t & \text{if $|t|\le 1$} \\
\lambda_k t+(\mu-\lambda_k)\Big(\frac{1}{2}\ln|t|+\sqrt{|t|}\big) & \text{if $t>1$,}
\end{cases}\]
where $\mu\in(\lambda_h,\lambda_{h+1})$, and $h<k$ are integers. Then $f$ satisfies ${\bf H}_1$ \ref{h11} - \ref{h14}.
\end{example}

\noindent
Clearly, from ${\bf H}_1$ \ref{h11}, \ref{h13} it follows that $f$ satisfies ${\bf H}_0$. Besides, from ${\bf H}_1$ \ref{h14} we see at once that $f(x,0)=0$ for a.e.\ $x\in\Omega$. We define $\varphi\in C^2(\s)$ as in Subsection \ref{sub22} and we establish some structural properties:

\begin{lemma}\label{ex-c}
If ${\bf H}_1$ \ref{h11} - \ref{h13} hold, then $\phi$ satisfies \eqref{c}.
\end{lemma}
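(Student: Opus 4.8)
The plan is to verify \eqref{c} for $\varphi$ in two stages: first, that every Cerami sequence $(u_n)$ is bounded in $\s$; then, that a bounded Cerami sequence has a strongly convergent subsequence. A preliminary remark, used throughout, is that ${\bf H}_1$ \ref{h11} together with the upper bound in \ref{h13} yields a linear growth estimate $|f(x,t)|\le a_0(1+|t|)$ (for some $a_0>0$, a.e.\ $x\in\Omega$, all $t\in\R$); in particular $f$ satisfies ${\bf H}_0$, so $\varphi\in C^2(\s)$ as recalled in Subsection \ref{sub22}.

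The second stage is routine. If $(u_n)$ is bounded in $\s$, then along a subsequence $u_n\rightharpoonup u$ in $\s$ and $u_n\to u$ in $L^2(\Omega)$ by the compact embedding; the linear growth bound makes $(f(\cdot,u_n))$ bounded in $L^2(\Omega)$, hence $\int_\Omega f(x,u_n)(u_n-u)\,dx\to 0$ by H\"older's inequality, and since $(1+\|u_n\|)\varphi'(u_n)\to 0$ in $H^{-s}(\Omega)$ we obtain $\langle u_n,u_n-u\rangle\to 0$; adding $-\langle u,u_n-u\rangle\to 0$ gives $\|u_n-u\|^2\to 0$.

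The first stage is the crux and the main obstacle, because the linear behavior at infinity in \ref{h13} permits resonance, and this is precisely where \ref{h12} is essential. Arguing by contradiction, suppose $\|u_n\|\to\infty$ and set $v_n=u_n/\|u_n\|$; along a subsequence $v_n\rightharpoonup v$ in $\s$, $v_n\to v$ in $L^2(\Omega)$ and a.e.\ in $\Omega$. I would derive two incompatible conclusions about $v$. First, testing $\varphi'(u_n)$ with $v_n$ and dividing by $\|u_n\|$, the $L^2(\Omega)$-boundedness of $f(\cdot,u_n)/\|u_n\|$ (again from linear growth) forces $\int_\Omega\frac{f(x,u_n)}{\|u_n\|}\,v_n\,dx\to 1$, which is impossible if $v=0$, so $v\neq 0$. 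Second, from $(\varphi(u_n))$ bounded and $\varphi'(u_n)(u_n)\to 0$ we have that
\[\int_\Omega\big(f(x,u_n)u_n-2F(x,u_n)\big)\,dx=2\varphi(u_n)-\varphi'(u_n)(u_n)\]
is bounded; by \ref{h11} and \ref{h12} the function $g(x,t):=f(x,t)t-2F(x,t)$ is bounded below by a constant $-C_0$ and tends to $+\infty$ as $|t|\to\infty$ uniformly for a.e.\ $x\in\Omega$, so Fatou's lemma applied to the nonnegative functions $g(x,u_n)+C_0$ shows that $|u_n(x)|$ can diverge to $+\infty$ only on a null set. Since $|u_n(x)|=\|u_n\|\,|v_n(x)|\to\infty$ for a.e.\ $x\in\{v\neq 0\}$, this gives $v=0$, contradicting the previous conclusion. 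Hence $(u_n)$ is bounded in $\s$, and the proof is complete.
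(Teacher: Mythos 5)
Your proof is correct, and it takes a genuinely more elementary route than the paper's. Both arguments hinge on showing that the normalized sequence $v_n=u_n/\|u_n\|$ cannot exist when $\|u_n\|\to\infty$, but the contradictions are reached differently. The paper upgrades $v_n\rightharpoonup w$ to strong convergence in $\s$ (so $\|w\|=1$), identifies $w$ as an eigenfunction of a weighted problem of type \eqref{ev} with weight $\eta_\infty\in[\lambda_k,\lambda_{k+1}]$ obtained as a weak $L^2$-limit of $f(\cdot,u_n)/\|u_n\|$, and then excludes $1\in\sigma(\eta_\infty)$ by a three-case analysis resting on strict monotonicity of weighted eigenvalues (Proposition \ref{mon}) in the non-resonant case and on unique continuation (Proposition \ref{ucp}) plus ${\bf H}_1$ \ref{h12} in the resonant ones. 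You bypass the limit equation entirely: $v\neq 0$ follows from the cheap normalization computation $\int_\Omega f(x,u_n)v_n\|u_n\|^{-1}\,dx\to 1$ together with the $L^2$-boundedness of $f(\cdot,u_n)/\|u_n\|$, while $v=0$ follows from ${\bf H}_1$ \ref{h12} and Fatou's lemma on $\{v\neq 0\}$, where $|u_n|\to\infty$ a.e. Your route needs neither unique continuation nor Proposition \ref{mon}, and does not require strong convergence of $(v_n)$; the price is that it invokes the non-quadraticity condition \ref{h12} in every case, including the non-resonant one where the paper's spectral comparison makes it unnecessary. The paper's heavier machinery is not wasted, however: the identification of the limit eigenvalue problem and the same case analysis are reused almost verbatim in Lemma \ref{ex-inf} to establish the uniform condition \eqref{ex-inf2} along the homotopy, a setting where a Fatou-type argument degenerates as $t\to 1$ because the quantity $f(x,t)t-2F(x,t)$ acquires a factor $1-t$. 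All the individual steps you state check out: the linear growth bound from \ref{h11} and \ref{h13}, the identity $\int_\Omega\big(f(x,u_n)u_n-2F(x,u_n)\big)\,dx=2\varphi(u_n)-\varphi'(u_n)(u_n)$, the uniform lower bound $f(x,t)t-2F(x,t)\ge -C_0$, and the final strong-convergence step for a bounded Cerami sequence.
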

\begin{proof}
Let $(u_n)$ be a sequence in $\s$ s.t.\ $|\varphi(u_n)|\le M$ for all $n\ge 1$ and $(1+\|u_n\|)\varphi'(u_n)\to 0$ in $H^{-s}(\Omega)$. We can find a real sequence $(\eps_n)$ s.t.\ $\eps_n\to 0^+$ and for all $n\ge 1$, $v\in\s$
\beq\label{ex-c1}
\Big|\langle u_n,v\rangle-\int_\Omega f(x,u_n)v\,dx\Big|\le\frac{\eps_n\|v\|}{1+\|u_n\|}.
\eeq
Choosing $v=u_n$ in \eqref{ex-c1} yields
\[-\|u_n\|^2+\int_\Omega f(x,u_n)u_n\,dx\le\eps_n,\]
while from boundedness of $(\varphi(u_n))$ we get
\[\|u_n\|^2-2\int_\Omega F(x,u_n)\,dx\le 2M.\]
Adding up we get for all integer $n\ge 1$
\beq\label{ex-c2}
\int_\Omega\big(f(x,u_n)u_n-2F(x,u_n)\big)\,dx\le M' \ (M'>0).
\eeq
We claim that $(u_n)$ is bounded in $\s$. Arguing by contradiction, and passing if necessary to a subsequence, we may assume that $\|u_n\|\to\infty$. Set for all $n\ge 1$ $w_n=u_n\|u_n\|^{-1}$, so we have $\|w_n\|=1$. Passing to a subsequence we may assume that $w_n\rightharpoonup w$ in $\s$ and $w_n\to w$ in $L^2(\Omega)$ (by the compact embedding $\s\hookrightarrow L^2(\Omega)$). From \eqref{ex-c2} we have for all $n\ge 1$, $v\in\s$
\beq\label{ex-c3}
\Big|\langle w_n,v\rangle-\int_\Omega \frac{f(x,u_n)}{\|u_n\|}v\,dx\Big|\le\frac{\eps_n\|v\|}{\|u_n\|+\|u_n\|^2}.
\eeq
We note that the sequence $(f(\cdot,u_n)\|u_n\|^{-1})$ is bounded in $L^2(\Omega)$. Indeed, by ${\bf H}_1$ \ref{h13} we can find $\beta,\rho>0$ s.t.\ for a.e.\ $x\in\Omega$ and all $|t|>\rho$
\[|f(x,t)|\le\beta|t|.\]
For all $n\ge 1$ set
\[\Omega'_n=\big\{x\in\Omega:\,|u_n(x)|\le\rho\big\}, \ \Omega''_n=\Omega\setminus\Omega'_n,\]
so by ${\bf H}_1$ \ref{h11} we have
\begin{align*}
\int_\Omega\frac{f(x,u_n)^2}{\|u_n\|^2}\,dx &\le \frac{1}{\|u_n\|^2}\int_{\Omega'_n}a_\rho(x)^2\,dx + \frac{\beta^2}{\|u_n\|^2}\int_{\Omega''_n}u_n^2\,dx \\
&\le \frac{\|a_\rho\|_2^2}{\|u_n\|^2}+\beta^2\frac{\|u_n\|_2^2}{\|u_n\|^2},
\end{align*}
and the latter is bounded (due to the continuous embedding $\s\hookrightarrow L^2(\Omega)$). Passing to a subsequence, we may assume that $(f(\cdot,u_n)\|u_n\|^{-1})$ is weakly convergent in $L^2(\Omega)$. More precisely, by ${\bf H}_1$ \ref{h13} we can find $\eta_\infty\in L^\infty(\Omega)$ s.t.\ $\lambda_k\le\eta_\infty(x)\le\lambda_{k+1}$ for a.e.\ $x\in\Omega$ and
\beq\label{ex-c4}
\frac{f(\cdot,u_n)}{\|u_n\|}\rightharpoonup \eta_\infty w \ \text{in $L^2(\Omega)$.}
\eeq
Choosing $v=w_n-w$ in \eqref{ex-c3} and using H\"older inequality yields for all $n\ge 1$
\[\big|\langle w_n,w_n-w\rangle\big| \le \Big\|\frac{f(\cdot,u_n)}{\|u_n\|}\Big\|_2 \, \|w_n-w\|_2+\frac{\eps_n\|w_n-w\|}{\|u_n\|+\|u_n\|^2},\]
and the latter tends to $0$ as $n\to\infty$. Therefore we have
\[\|w_n-w\|^2=\langle w_n,w_n-w\rangle-\langle w,w_n-w\rangle \to 0,\]
i.e., $w_n\to w$ in $\s$. In particular we have $\|w\|=1$. Passing to the limit in \eqref{ex-c3} and using \eqref{ex-c4}, we see that $w$ is a solution of the \eqref{ev}-type problem
\beq\label{ex-c5}
\begin{cases}
\fl w=\eta_\infty(x)w & \text{in $\Omega$} \\
w=0 & \text{in $\Omega^c$.}
\end{cases}
\eeq
So, $1\in\sigma(\eta_\infty)$ with $w$ as an associated eigenfunction. Three cases may occur:
\begin{itemize}[leftmargin=1cm]
\item[$(a)$] if $\eta_\infty\not\equiv\lambda_k,\,\lambda_{k+1}$ (non-resonance), then by Proposition \ref{mon} (recall that any constant weight is $C^1(\Omega)$) we have
\[\lambda_k(\eta_\infty)<\lambda_k(\lambda_k)=1=\lambda_{k+1}(\lambda_{k+1})<\lambda_{k+1}(\eta_\infty),\]
against the fact that $1$ is an eigenvalue of \eqref{ex-c5};
\item[$(b)$] if $\eta_\infty\equiv\lambda_k$ (left resonance), then by Proposition \ref{ucp} $w$ has u.c.p., that is, $w(x)\neq 0$ for a.e.\ $x\in\Omega$, which implies $|u_n(x)|\to\infty$ for a.e.\ $x\in\Omega$, hence by ${\bf H}_1$ \ref{h12} and the Fatou Lemma we have
\[\lim_n\int_\Omega\big(f(x,u_n)u_n-2F(x,u_n)\big)\,dx=\infty,\]
against \eqref{ex-c2};
\item[$(c)$] if $\eta_\infty\equiv\lambda_{k+1}$ (right resonance), then reasoning as in case $(b)$ we reach a contradiction.
\end{itemize}
Thus, $(u_n)$ is bounded in $\s$. Passing to a subsequence, we have $u_n\rightharpoonup u$ in $\s$ and $u_n\to u$ in $L^2(\Omega)$. Choosing $v=u_n-u$ in \eqref{ex-c1} and letting $n\to\infty$, we get
\[\lim_n\langle u_n,u_n-u\rangle=0,\]
and as above we deduce $u_n\to u$ in $\s$, concluding the proof.
\end{proof}

\noindent
Now we aim at a description of the critical set of $\varphi$: since we are looking for a non-trivial solution, we can always assume that $K(\varphi)$ is a {\em finite} set. We compute the critical groups of $\varphi$ at infinity:

\begin{lemma}\label{ex-inf}
If ${\bf H}_1$ \ref{h11} - \ref{h13} hold, then for all $q\ge 0$
\[C_q(\varphi,\infty)=\delta_{q,k}\R.\]
\end{lemma}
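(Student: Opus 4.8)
The strategy is to compute $C_q(\varphi,\infty)$ by homotoping $\varphi$ to the purely quadratic functional $\psi_{\eta_\infty,1}$ associated to a weight $\eta_\infty\in L^\infty(\Omega)$ satisfying $\lambda_k\le\eta_\infty(x)\le\lambda_{k+1}$, and then invoking Proposition~\ref{evcg}~\ref{evcg2}, which gives $C_q(\psi_{\eta_\infty,1},\infty)=\delta_{q,k}\R$ provided $\eta_\infty$ can be chosen strictly between $\lambda_k$ and $\lambda_{k+1}$ in the sense of Proposition~\ref{mon}. The cleanest route is to fix a concrete admissible weight, e.g. $\eta_\infty\equiv\mu$ for some $\mu\in(\lambda_k,\lambda_{k+1})$ (using ${\bf H}_1$~\ref{h13} only for the slope bounds, not to extract $\eta_\infty$ from a sequence), so that $\lambda_k(\mu)<1<\lambda_{k+1}(\mu)$ and hence $C_q(\psi_{\mu,1},\infty)=\delta_{q,k}\R$. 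First I would set $\psi_0=\varphi$ and $\psi_1=\psi_{\mu,1}$, and consider the homotopy $H(t,u)=(1-t)\varphi(u)+t\,\psi_{\mu,1}(u)$ for $t\in[0,1]$, noting each $H(t,\cdot)\in C^2(\s)$.

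The core of the argument is a uniform Cerami-type estimate along the homotopy: I claim there exist $R>0$ and $\sigma>0$ such that $(1+\|u\|)\,\|\partial_u H(t,u)\|_*\ge\sigma$ for all $t\in[0,1]$ and all $\|u\|\ge R$. Suppose not; then there are $t_n\in[0,1]$ and $u_n\in\s$ with $\|u_n\|\to\infty$ and $(1+\|u_n\|)\,\partial_u H(t_n,u_n)\to 0$ in $H^{-s}(\Omega)$. Writing $w_n=u_n/\|u_n\|$, the equation $\partial_u H(t_n,u_n)=0$ reads, tested against $v\in\s$,
\[
\langle u_n,v\rangle=(1-t_n)\int_\Omega f(x,u_n)v\,dx+t_n\mu\int_\Omega u_n v\,dx+o(1),
\]
so dividing by $\|u_n\|$ and using that $f(\cdot,u_n)/\|u_n\|$ is bounded in $L^2(\Omega)$ (by ${\bf H}_1$~\ref{h11}, \ref{h13}, exactly as in Lemma~\ref{ex-c}) and passing to a subsequence $t_n\to t_*$, $w_n\rightharpoonup w$ in $\s$, $w_n\to w$ in $L^2(\Omega)$, and $f(\cdot,u_n)/\|u_n\|\rightharpoonup\tilde\eta w$ in $L^2(\Omega)$ with $\lambda_k\le\tilde\eta(x)\le\lambda_{k+1}$, one gets $w_n\to w$ strongly, $\|w\|=1$, and $w$ solves $\fl w=\eta_*(x)w$ in $\Omega$ with $\eta_*=(1-t_*)\tilde\eta+t_*\mu$, so again $\lambda_k\le\eta_*(x)\le\lambda_{k+1}$. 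Thus $1\in\sigma(\eta_*)$ with eigenfunction $w$. If $\eta_*\not\equiv\lambda_k,\lambda_{k+1}$, Proposition~\ref{mon} gives $\lambda_k(\eta_*)<1<\lambda_{k+1}(\eta_*)$, contradicting $1\in\sigma(\eta_*)$; if $\eta_*\equiv\lambda_k$ (only possible when $t_*=0$, since $\mu>\lambda_k$), then $w$ has u.c.p.\ by Proposition~\ref{ucp}, so $|u_n(x)|\to\infty$ a.e., and ${\bf H}_1$~\ref{h12} plus Fatou contradict the bound \eqref{ex-c2} (which holds here because $H(t_n,u_n)$ is bounded along any such sequence once we also note that, along a non-contradictory branch, $t_n\to 0$); the case $\eta_*\equiv\lambda_{k+1}$ is symmetric. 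This yields the uniform estimate.

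From the uniform estimate one obtains the conclusion by the standard second deformation/invariance-of-critical-groups-under-homotopy argument: choosing $c<\inf_{[0,1]}\inf\{H(t,u):\,u\in K(H(t,\cdot))\}$ (finite and bounded below since all critical points of every $H(t,\cdot)$ lie in the ball $\|u\|<R$, and each $H(t,\cdot)$ is bounded below on that ball), the sublevel set $\overline{H(t,\cdot)}^c$ deforms onto a fixed set and $t\mapsto H_q(\s,\overline{H(t,\cdot)}^c)$ is independent of $t$; see \cite[Proposition~6.35 and its consequences]{MMP} or the homotopy-invariance statement in \cite{C}. Hence $C_q(\varphi,\infty)=H_q(\s,\overline\varphi^c)=H_q(\s,\overline{\psi_{\mu,1}}^c)=C_q(\psi_{\mu,1},\infty)=\delta_{q,k}\R$ by Proposition~\ref{evcg}~\ref{evcg2}. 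The main obstacle is the uniform Cerami estimate along the homotopy in the resonant endpoint case $\eta_*\equiv\lambda_k$ (and symmetrically $\lambda_{k+1}$): there one cannot use Proposition~\ref{mon}, and must instead argue that the only way $\eta_*$ hits the endpoint is $t_*=0$, reducing to the already-handled resonance analysis of Lemma~\ref{ex-c}; care is needed to verify that the boundedness of $H(t_n,u_n)$ (equivalently of $\varphi(u_n)$ along the $t_n\to 0$ branch) is available, which follows from the definition of $c$ and the fact that we are on a pseudo-gradient flow line below level $c$.
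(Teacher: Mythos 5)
Your overall strategy is the same as the paper's: homotope $\varphi$ to the model quadratic functional with a weight in $(\lambda_k,\lambda_{k+1})$, prove a uniform Cerami-type estimate along the homotopy (treating the three cases non-resonance / resonance at $\lambda_k$ / resonance at $\lambda_{k+1}$ via Proposition \ref{mon}, Proposition \ref{ucp} and Fatou), and conclude by homotopy invariance of the critical groups at infinity together with Proposition \ref{evcg}. However, there is a genuine gap in the form of the uniform estimate you claim and in how you propose to prove it. You assert that there exist $R,\sigma>0$ with $(1+\|u\|)\|\partial_u H(t,u)\|_*\ge\sigma$ for \emph{all} $t\in[0,1]$ and all $\|u\|\ge R$, with no restriction on the level $H(t,u)$. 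Negating this gives sequences $(t_n)$, $(u_n)$ with $\|u_n\|\to\infty$ and $(1+\|u_n\|)\|\partial_u H(t_n,u_n)\|_*\to 0$, but with \emph{no control on $H(t_n,u_n)$}. In the resonant endpoint case $\eta_*\equiv\lambda_k$ (which indeed forces $t_*=0$), the contradiction you want comes from ${\bf H}_1$ \ref{h12} and Fatou applied to $\int_\Omega\big(f(x,u_n)u_n-2F(x,u_n)\big)\,dx$; but the only way to bound this quantity from above is through the identity
\[
(1-t_n)\int_\Omega\big(f(x,u_n)u_n-2F(x,u_n)\big)\,dx=2H(t_n,u_n)-\partial_u H(t_n,u_n)(u_n),
\]
whose right-hand side is controlled only if $H(t_n,u_n)$ is bounded above. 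That bound is simply not available under your hypotheses: the Cerami condition (Lemma \ref{ex-c}) only excludes such sequences at \emph{bounded} energy levels. Your parenthetical fix — that boundedness of $H(t_n,u_n)$ "follows from the definition of $c$ and the fact that we are on a pseudo-gradient flow line below level $c$" — is circular: the estimate must be established for arbitrary $u$ with $\|u\|\ge R$ \emph{before} any deformation or flow argument can be run, so you cannot assume the points lie on a flow line below a level chosen afterwards.

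The paper avoids exactly this problem by proving the weaker, level-restricted estimate \eqref{ex-inf2}: there exist $\beta,\rho>0$ such that $h(t,u)\le\beta$ implies $(1+\|u\|)\|h'_u(t,u)\|_*\ge\rho$, and then invoking the homotopy invariance result of Liang--Su \cite[Proposition 3.2]{LS}, whose hypothesis is precisely this sublevel-set condition. In the contradiction argument one then has $h(t_n,u_n)\to-\infty$, hence an upper bound on $2h(t_n,u_n)-h'_u(t_n,u_n)(u_n)$, which supplies the analogue of \eqref{ex-c2} needed for the Fatou argument in the resonant cases (the non-resonant case and the reduction $t_*<1$, resp. $t_*=0$ at the endpoint $\lambda_k$, go through exactly as you wrote). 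To repair your proof, replace your "for all $\|u\|\ge R$" claim by the sublevel-set estimate and use \cite[Proposition 3.2]{LS} (or an equivalent invariance theorem whose hypotheses you actually verify) instead of the informal deformation argument in your last paragraph; with that change the remainder of your argument matches the paper's.
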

\begin{proof}
Fix $\lambda\in(\lambda_k,\lambda_{k+1})$ and define $\psi_\lambda=\psi_{1,\lambda}$ as in Subsection \ref{sub23}. By Proposition \ref{evcg} \ref{evcg2} we have for all integer $q\ge 0$
\beq\label{ex-inf1}
C_q(\psi_\lambda,\infty)=\delta_{q,k}\R.
\eeq
Now define a homotopy by setting for all $t\in[0,1]$, $u\in\s$
\[h(t,u)=(1-t)\varphi(u)+t\psi_\lambda(u).\]
Clearly $h(t,\cdot)\in C^1(\s)$ for all $t\in[0,1]$, $h(0,\cdot)=\varphi$, and $h(1,\cdot)=\psi_\lambda$. Moreover, $h$ satisfies a uniform \eqref{c}-type condition at infinity: there exist $\beta,\rho>0$ s.t.\ for all $(t,u)\in[0,1]\times\s$
\beq\label{ex-inf2}
h(t,u)\le\beta \ \Longrightarrow \ (1+\|u\|)\|h'_u(t,u)\|_*\ge\rho.
\eeq
Arguing by contradiction, we assume that there exist sequences $(t_n)$ in $[0,1]$, $(u_n)$ in $\s$ s.t.\ $h(t_n,u_n)\to -\infty$ and $(1+\|u\|)h'_u(t_n,u_n)\to 0$ in $H^{-s}(\Omega)$. Set for all $n\ge 1$ $w_n=u_n\|u_n\|^{-1}$, so $\|w_n\|=1$ and there exists $(\eps_n)$ s.t.\ $\eps_n\to 0^+$ and for all $n\ge 1$, $v\in\s$ we have
\beq\label{ex-inf3}
\Big|\langle w_n,v\rangle-\int_\Omega\Big((1-t_n)\frac{f(x,u_n)}{\|u_n\|}+t_n \lambda w_n\Big)v\,dx\Big|\le\frac{\eps_n\|v\|}{\|u_n\|+\|u_n\|^2}.
\eeq
Passing to a subsequence, we have $t_n\to t$ in $\R$, $w_n\rightharpoonup w$ in $\s$, and $w_n\to w$ in $L^2(\Omega)$, as well as $f(\cdot,u_n)\|u_n\|^{-1}\rightharpoonup\eta_\infty w$ in $L^2(\Omega)$ for some $\eta_\infty\in L^\infty(\Omega)$ with $\lambda_k\le\eta_\infty(x)\le\lambda_{k+1}$ a.e.\ in $\Omega$. Reasoning as in the proof of Lemma \ref{ex-c} we deduce that $w_n\to w$ in $\s$, in particular $\|w\|=1$. Passing to the limit in \eqref{ex-inf3} we see that $w$ is a solution of
\beq\label{ex-inf4}
\begin{cases}
\fl w=\big((1-t)\eta_\infty(x)+t\lambda\big)w & \text{in $\Omega$} \\
w=0 & \text{in $\Omega^c$.}
\end{cases}
\eeq
So, $1\in\sigma\big((1-t)\eta_\infty+t\lambda\big)$ with $w$ as an associated eigenfunction. We note that $t<1$, otherwise the weight function in \eqref{ex-inf4} would reduce to $\lambda$, against our choice and the fact that $\lambda\notin\sigma$. Now we distinguish three cases as in the proof of Lemma \ref{ex-c} and we reach a contradiction in any of such cases. Thus \eqref{ex-inf2} is achieved. We invoke \cite[Proposition 3.2]{LS} to have homotopy invariance of the critical groups at infinity, i.e., for all $q\ge 0$
\[C_q(\varphi,\infty)=C_q(\psi_\lambda,\infty).\]
By \eqref{ex-inf1} we conclude.
\end{proof}

\noindent
It is a far more delicate matter to compute the critical groups at $0$:

\begin{lemma}\label{ex-z}
If ${\bf H}_1$ \ref{h11} - \ref{h14} hold, then for all $q\ge 0$
\[C_q(\varphi,0)=\delta_{q,h}\R.\]
\end{lemma}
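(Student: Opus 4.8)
The plan is to compare $\varphi$ with the purely quadratic functional $\psi_\lambda=\psi_{\eta,\lambda}$ for a suitable weight, where by hypothesis ${\bf H}_1$ \ref{h14} the local slope $f(x,t)/t$ is trapped between $\eta_1$ and $\eta_2$, both of which live in the spectral gap $(\lambda_h,\lambda_{h+1})$ (strictly, since $\eta_1\not\equiv\lambda_h$ and $\eta_2\not\equiv\lambda_{h+1}$, Proposition \ref{mon} gives $\lambda_h<\lambda_h(\eta_i)$ and $\lambda_{h+1}(\eta_i)<\lambda_{h+1}$, hence $1\in(\lambda_h(\eta_i),\lambda_{h+1}(\eta_i))$ after the obvious rescaling — more precisely one works with $\psi_{\eta_1,1}$ and $\psi_{\eta_2,1}$ and checks $\lambda_h(\eta_i)<1<\lambda_{h+1}(\eta_i)$). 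By Proposition \ref{evcg} \ref{evcg1} we already know $C_q(\psi_{\eta_i,1},0)=\delta_{q,h}\R$ for $i=1,2$. So it suffices to show that the critical groups of $\varphi$ at $0$ coincide with those of one of these model functionals, and this will be done by a local homotopy/deformation argument near $0$.

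First I would record that $0$ is an isolated critical point of $\varphi$: from ${\bf H}_1$ \ref{h14}, for $u$ small in $\s$ (hence, by Proposition \ref{reg}, small in $C^0_\delta(\overline\Omega)$, so $\|u\|_\infty\le\delta_0$) any nontrivial solution $u$ of \eqref{dir} would satisfy $\fl u=g(x)u$ with $\eta_1(x)\le g(x)\le\eta_2(x)$, i.e.\ $1\in\sigma(g)$, which is impossible since $\lambda_h(g)<1<\lambda_{h+1}(g)$ by Proposition \ref{mon}. (This needs the a priori/regularity bridge: $K(\varphi)$ finite is assumed, but one genuinely needs that solutions near $0$ in the $\s$-norm are small in sup-norm; Propositions \ref{apb} and \ref{reg} \ref{reg2} provide exactly this, together with the compact embedding into $L^{2^*_s}$.) Next I would introduce the homotopy $h(\tau,u)=(1-\tau)\varphi(u)+\tau\,\psi_{\eta_1,1}(u)$ (or interpolate via the weight $(1-\tau)f(x,u)/u+\tau\eta_1$ more carefully near $u=0$), whose Gateaux derivative in $u$ is $\langle u,v\rangle-\int_\Omega[(1-\tau)f(x,u)+\tau\eta_1 u]v\,dx$. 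The zero of $h'_u(\tau,\cdot)$ in a small ball is only $0$ for every $\tau\in[0,1]$, by the same eigenvalue-gap argument applied to the interpolated weight $(1-\tau)g(x)+\tau\eta_1(x)$, which still lies in $[\eta_1,\eta_2]\subset(\lambda_h,\lambda_{h+1})$ pointwise (here $g$ is the ``secant slope'' $g(x)=\int_0^1 f'_t(x,\theta u(x))\,d\theta$ or $f(x,u)/u$, bounded between $\eta_1$ and $\eta_2$ on the small ball). Then I invoke the homotopy invariance of critical groups at an isolated critical point — e.g.\ \cite[Theorem 5.2]{C} or the local version in \cite[Proposition 6.??]{MMP} — to get $C_q(\varphi,0)=C_q(\psi_{\eta_1,1},0)=\delta_{q,h}\R$.

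The main obstacle, as usual with critical groups at zero, is making the homotopy invariance rigorous: one must exhibit a uniform \eqref{c}-type (or even just uniform ``no critical points in a punctured ball'') condition for $h(\tau,\cdot)$ that is independent of $\tau$, so that the deformation can be carried out on a fixed small neighborhood $U$ of $0$. The delicate point is that the interpolation must be done at the level of the \emph{nonlinearity} (replacing $f(x,t)$ by a $\tau$-family of functions that all satisfy the sandwich $\eta_1(x)t\le(\cdot)\le\eta_2(x)t$ for $|t|\le\delta_0$ and the sign/growth conditions making the energy $C^2$), not merely a convex combination of functionals, because $\psi_{\eta_1,1}$ is not of the form $\varphi$ with the same $f$; I expect the authors define $\hat f(x,t)$ equal to $f(x,t)$ for $|t|\le\delta_0$ and extended linearly (with slope $\eta_1(x)$, say) for $|t|>\delta_0$, note that the truncated energy $\hat\varphi$ agrees with $\varphi$ on a neighborhood of $0$ hence has the same critical groups there, and then homotope $\hat f$ to $\eta_1(x)t$ across the whole line while controlling the Cerami condition uniformly via the spectral gap (exactly as in the proof of Lemma \ref{ex-inf}, but now the relevant eigenvalue problem is $\fl w=[(1-\tau)\eta_\infty+\tau\eta_1]w$ with the weight squeezed into $(\lambda_h,\lambda_{h+1})$, giving a contradiction with $1\in\sigma(\cdot)$). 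Once that uniform condition is in place, the conclusion $C_q(\varphi,0)=\delta_{q,h}\R$ follows from Proposition \ref{evcg} \ref{evcg1}.
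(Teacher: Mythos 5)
Your overall strategy (homotope $\varphi$ near $0$ to a model quadratic functional whose weight sits in the spectral gap $(\lambda_h,\lambda_{h+1})$, then invoke Proposition \ref{evcg} \ref{evcg1}) is exactly the paper's, and you correctly identify the crux: to use the sandwich condition of ${\bf H}_1$ \ref{h14} one needs $\|u\|_\infty\le\delta_0$, whereas critical groups are computed on $\s$-neighborhoods of $0$, and $\s$-smallness does not give sup-norm smallness. However, the concrete fix you propose does not close this gap. Your truncation argument rests on the claim that the truncated energy $\hat\varphi$ ``agrees with $\varphi$ on a neighborhood of $0$ hence has the same critical groups there'': this is false, because $\hat\varphi=\varphi$ only on the set $\{u:\|u\|_\infty\le\delta_0\}$, which is \emph{not} a neighborhood of $0$ in $H^s_0(\Omega)$. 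Likewise, your appeal to Propositions \ref{apb} and \ref{reg} \ref{reg2} as the ``regularity bridge'' is not sufficient as stated: those results give bounds of the form $M(\|u\|_{2^*_s})$ and $C(1+\|u\|_{2^*_s})$, which are bounded but need not tend to $0$ as $\|u\|\to 0$, so they do not by themselves show that solutions $\s$-close to $0$ have $\|u\|_\infty\le\delta_0$. (This could be repaired by a compactness argument — a sequence of solutions converging to $0$ in $\s$ is bounded in $C^\alpha_\delta(\overline\Omega)$, hence precompact in $C^0_\delta(\overline\Omega)$, hence converges to $0$ uniformly — but you do not carry this out.)

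The paper resolves the difficulty differently: it restricts $\varphi$ and $\psi_\lambda$ (with \emph{constant} weight $\lambda\in(\lambda_h,\lambda_{h+1})$, rather than your $\psi_{\eta_1,1}$) to the dense subspace $X=\s\cap C^0_\delta(\overline\Omega)$ with norm $\|u\|_X=\|u\|+\|u\|_{0,\delta}$, so that $\|u\|_X\le\delta_0$ \emph{does} force $\|u\|_\infty\le\delta_0$; it then proves that $0$ is a uniformly isolated critical point of the convex homotopy $(1-t)\tilde\varphi+t\tilde\psi_\lambda$ by a quantitative estimate $\langle\tilde h'_u(t,u),\hat u-\check u\rangle\ge c_5\|u\|^2$, obtained by testing against $\hat u-\check u$ for the splitting $\check H={\rm span}\{e_1,\dots,e_h\}$, $\hat H=\check H^\perp$, with the coercivity constants coming from the unique continuation property (your eigenvalue-gap contradiction via the secant weight $g=f(x,u)/u$ is a softer, qualitative version of the same mechanism and would also work once the sup-norm issue is settled). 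Finally it invokes the invariance of critical groups under passage to a dense subspace to return to $\s$. Without either the $X$-space device or the explicit compactness argument, your proof as written has a genuine gap at its central step.
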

\begin{proof}
By ${\bf H}_1$ \ref{h14}, clearly $f(x,0)=0$ for a.e.\ $x\in\Omega$, so $0\in K(\varphi)$. Since by assumption $K(\varphi)$ is finite, this is an isolated critical point. Fix $\lambda\in(\lambda_h,\lambda_{h+1})$ and define $\psi_\lambda\in C^2(\s)$ as in the proof of Lemma \ref{ex-inf}. By Proposition \ref{evcg} \ref{evcg1} we have $0\in K(\psi_\lambda)$ and for all integer $q\ge 0$
\beq\label{ex-z1}
C_q(\psi_\lambda,0)=\delta_{q,h}\R.
\eeq
Preliminarily we note that Proposition \ref{spe} ensures the orthogonal decomposition $\s=\check H\oplus\hat H$, where
\[\check H={\rm span}\,\{e_1,\ldots e_h\}, \ \hat H=\overline{\rm span}\,\{e_j:\,j\ge h+1\}\]
are both Hilbert spaces ($\check H$ with finite dimension). Thus, for all $u\in\s$ there exists unique $\check u\in\check H$, $\hat u\in\hat H$ s.t.\ $u=\check u+\hat u$, moreover the following equality holds for a.e.\ $x\in\Omega$:
\beq\label{ex-z2}
u(x)(\hat u(x)-\check u(x))=\hat u(x)^2-\check u(x)^2.
\eeq
We restrict ourselves to $X=\s\cap C^0_\delta(\overline\Omega)$, which is a Banach space, dense in $\s$, endowed with the norm
\[\|u\|_X=\|u\|+\|u\|_{0,\delta}\]
(inducing a stronger topology than that of $C^0_\delta(\overline\Omega)$). We denote
\[\tilde\varphi=\restr{\varphi}{X}, \ \tilde\psi_\lambda=\restr{\psi_\lambda}{X}.\]
Then clearly $\tilde\varphi,\tilde\psi_\lambda\in C^1(X)$ and their critical groups at $0$ coincide with those of $\varphi$, $\psi_\lambda$, respectively (see \cite[p.\ 14]{C}). For all $(t,u)\in[0,1]\times X$ we set
\[\tilde h(t,u)=(1-t)\tilde\varphi(u)+t\tilde\psi_\lambda(u),\]
so for all $t\in [0,1]$ we have $\tilde h(t,\cdot)\in C^1(X)$ and $0\in K(\tilde h(t, \cdot))$. We claim now that $0$ is an isolated critical point of $\tilde h(t,\cdot)$, uniformly with respect to $t\in[0,1]$. Let $\delta_0>0$ be as in ${\bf H}_1$ \ref{h14}. For any $u\in X$, $\|u\|_X\le\delta_0$ we have in particular $\|u\|_\infty\le\delta_0$. We claim that for a.e.\ $x\in\Omega$
\beq\label{ex-z3}
f(x,u(x))(\hat u(x)-\check u(x))\le\eta_2(x)\hat u(x)^2-\eta_1\check u(x)^2.
\eeq
Indeed, for a.e.\ $x\in\Omega$ two cases may occur:
\begin{itemize}[leftmargin=1cm]
\item[$(a)$] if $u(x)(\hat u(x)-\check u(x))\ge 0$, then by \eqref{ex-z2} and ${\bf H}_1$ \ref{h14} we have
\begin{align*}
& f(x,u(x))(\hat u(x)-\check u(x))\le\eta_2(x)u(x)(\hat u(x)-\check u(x)) \\
&=\eta_2(x)(\hat u(x)^2-\check u(x)^2)\le\eta_2(x)\hat u(x)^2-\eta_1\check u(x)^2;
\end{align*}
\item[$(b)$] if $u(x)(\hat u(x)-\check u(x))<0$, then by \eqref{ex-z2} and ${\bf H}_1$ \ref{h14} we have
\begin{align*}
& f(x,u(x))(\hat u(x)-\check u(x))\le\eta_1(x)u(x)(\hat u(x)-\check u(x)) \\
&=\eta_1(x)(\hat u(x)^2-\check u(x)^2)\le\eta_2(x)\hat u(x)^2-\eta_1\check u(x)^2.
\end{align*}
\end{itemize}
By \eqref{ex-z3} we have
\begin{align}\label{ex-z4}
\langle\tilde\varphi(u),\hat u-\check u\rangle &= \langle u,\hat u-\check u\rangle-\int_\Omega f(x,u)(\hat u-\check u)\,dx \\
\nonumber&\ge \|\hat u\|^2-\|\check u\|^2-\int_\Omega\big(\eta_2(x)\hat u^2-\eta_1(x)\check u^2\big)\,dx \\
\nonumber&= \tilde\gamma_2(\hat u)-\tilde\gamma_1(\check u),
\end{align}
where we have set for all $v\in X$
\[\tilde\gamma_i(v)=\|v\|^2-\int_\Omega\eta_i(x)v^2\,dx \ (i=1,2).\]
To proceed, we need some estimates on $\tilde\gamma_i$. First, we prove that there exists $c_1>0$ s.t.\ for all $v\in X\cap\check H$
\beq\label{ex-z5}
\tilde\gamma_1(v)\le -c_1\|v\|^2.
\eeq
Indeed, since $\eta_1(x)\ge\lambda_h$ for a.e.\ $x\in\Omega$, and by Proposition \ref{spe} \ref{spe2} we have
\[\tilde\gamma_1(v)\le\|v\|^2-\lambda_h\|v\|_2^2\le 0.\]
To get a strict inequality, we argue by contradiction: assume that $(v_n)$ is a sequence in $X\cap\check H$ s.t.\ $\|v_n\|=1$ for all integer $n\ge 1$ and $\tilde\gamma_1(v_n)\to 0$. Since $\check H$ is finite-dimensional, passing to a subsequence we have $v_n\to v$ in $\s$, hence $\|v\|=1$ and $\tilde\gamma_1(v)=0$, so
\[\|v\|^2=\int_\Omega\eta_1(x)v^2\,dx\ge\lambda_h\|v\|_2^2,\]
against $v\in\check H$.
\vskip2pt
\noindent
Besides, there exists $c_2>0$ s.t.\ for all $v\in X\cap\hat H$
\beq\label{ex-z6}
\tilde\gamma_2(v)\ge c_2\|v\|^2.
\eeq
Indeed, since $\eta_2(x)\le\lambda_{h+1}$ for a.e.\ $x\in\Omega$, and by Proposition \ref{spe} \ref{spe2} we have
\[\tilde\gamma_2(v)\ge\|v\|^2-\lambda_{h+1}\|v\|^2_2\ge 0.\]
To complete the proof of \eqref{ex-z6}, again we argue by contradiction: let $(v_n)$ be a sequence in $X\cap\hat H$ s.t.\ $\|v_n\|=1$ for all $n\ge 1$ and $\tilde\gamma_2(v_n)\to 0$. Passing to a subsequence, we may assume $v_n\rightharpoonup v$ in $\s$ and $v_n\to v$ in $L^2(\Omega)$. By lower weak semi-continuity we have
\[\tilde\gamma_2(v)\le\liminf_n\tilde\gamma_2(v_n)=0,\]
in particular $\|v\|^2\le\lambda_{h+1}\|v\|_2^2$. Recalling the definition of $\lambda_{h+1}$, in fact we have $\|v\|^2=\lambda_{h+1}\|v\|_2^2$. Two cases may occur:
\begin{itemize}[leftmargin=1cm]
\item[$(a)$] if $v=0$, then we have
\[\|v_n\|^2=\tilde\gamma_2(v_n)+\int_\Omega\eta_2(x)v_n^2\,dx\to 0,\]
against $\|v_n\|=1$;
\item[$(b)$] if $v\neq 0$, then $v$ is a $\lambda_{h+1}$-eigenfunction, in particular it has u.c.p.\ (Proposition \ref{ucp}), which in turn implies (recall that $\eta_2\not\equiv\lambda_{h+1}$)
\[\|v\|^2=\int_\Omega\eta_2(x)v^2\,dx<\lambda_{h+1}\|v\|_2^2,\]
a contradiction.
\end{itemize}
Now we use \eqref{ex-z5} and \eqref{ex-z6} into \eqref{ex-z4} and we get
\beq\label{ex-z7}
\langle\tilde\varphi(u),\hat u-\check u\rangle \ge c_2\|\hat u\|^2+c_1\|\check u\|^2 \ge c_3\|u\|^2,
\eeq
where we have set $c_3=\min\{c_1,c_2\}>0$ and used orthogonality.
\vskip2pt
\noindent
We prove a similar estimate for $\tilde\psi_\lambda$:
\beq\label{ex-z8}
\langle\tilde\psi_\lambda(u),\hat u-\check u\rangle\ge c_4\|u\|^2 \ (c_4>0).
\eeq
Indeed, by orthogonality and $\lambda_h<\lambda<\lambda_{h+1}$ we have
\begin{align*}
&\langle\tilde\psi_\lambda(u),\hat u-\check u\rangle = \|\hat u\|^2-\|\check u\|^2-\lambda(\|\hat u\|_2^2-\|\check u\|_2^2) \\
&\ge \Big(1-\frac{\lambda}{\lambda_{h+1}}\Big)\|\hat u\|^2+\Big(\frac{\lambda}{\lambda_h}-1\Big)\|\check u\|^2 \ge c_4\|u\|^2,
\end{align*}
where we have set
\[c_4=\min\Big\{1-\frac{\lambda}{\lambda_{h+1}},\,\frac{\lambda}{\lambda_h}-1\Big\} > 0.\]
By \eqref{ex-z7} and \eqref{ex-z8} we have for all $t\in[0,1]$
\begin{align*}
&\langle\tilde h'_u(t,u),\hat u-\check u\rangle = (1-t)\langle\tilde\varphi'(t,u),\hat u-\check u\rangle + t\langle\tilde\psi'_\lambda(t,u),\hat u-\check u\rangle \\
&\ge \big((1-t)c_3+tc_4\big)\|u\|^2 \ge c_5\|u\|^2,
\end{align*}
where we have set $c_5=\min\{c_3,c_4\}>0$. Now we can prove our claim, namely, that $0\in K(\tilde h(t,\cdot))$ is isolated uniformly with respect to $t\in[0,1]$. Arguing by contradiction, assume that there exist sequences $(t_n)$ in $[0,1]$, $(u_n)$ in $X\setminus\{0\}$, s.t.\ $h'_u(t_n,u_n)=0$ for all $n\ge 1$ and $u_n\to 0$ in $X$. In particular, $u_n\to 0$ in $C^0_\delta(\overline\Omega)$, so for all $n$ big enough we have $\|u_n\|_\infty\le\delta_0$ and the above inequality yields
\[\langle\tilde h'_u(t_n,u_n),\hat u_n-\check u_n\rangle \ge c_5\|u_n\|^2 >0,\]
a contradiction. Therefore, by homotopy invariance of critical groups (see \cite[Theorem 5.6]{C}), we have for all $q\ge 0$
\[C_q(\tilde\varphi,0)=C_q(\tilde\psi_\lambda,0).\]
Recalling \eqref{ex-z1} and the invariance of critical groups on dense subspaces, allows us to conclude.
\end{proof}

\noindent
We can now prove our existence result:

\begin{theorem}\label{ex}
If ${\bf H}_1$ \ref{h11} - \ref{h14} hold, then \eqref{dir} has at least one non-trivial solution $\bar u\in C^\alpha_\delta(\overline\Omega)$.
\end{theorem}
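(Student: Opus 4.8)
The plan is to assemble the three preceding lemmas through Morse theory. First I would dispose of a degenerate case: if $K(\varphi)$ is infinite then $\varphi$ has infinitely many critical points, hence \eqref{dir} has infinitely many solutions and in particular a non-trivial one, and we are done. So from now on assume $K(\varphi)$ is a finite set. By ${\bf H}_1$ \ref{h14} we have $f(x,0)=0$ for a.e.\ $x\in\Omega$, so $0\in K(\varphi)$, and by finiteness $0$ is an isolated critical point. Then Lemmas \ref{ex-c}, \ref{ex-inf} and \ref{ex-z} apply and yield: $\varphi\in C^2(\s)$ satisfies \eqref{c}, $C_q(\varphi,\infty)=\delta_{q,k}\R$ and $C_q(\varphi,0)=\delta_{q,h}\R$ for all $q\ge 0$.

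Next I would record the elementary observation that ${\bf H}_1$ \ref{h14} forces $k\neq h$: were $k=h$ we would have $\lambda_k=\lambda_h$, contrary to assumption. Consequently $C_k(\varphi,0)=\delta_{k,h}\R=0$, whereas $C_k(\varphi,\infty)=\delta_{k,k}\R=\R\neq 0$. Applying Proposition \ref{mr} \ref{mr2} with this integer $k$ to the finite critical set $K(\varphi)\ni 0$, we obtain some $\bar u\in K(\varphi)$ with $C_k(\varphi,\bar u)\neq 0$. Since $C_k(\varphi,0)=0$, necessarily $\bar u\neq 0$, so $\bar u$ is a non-trivial weak solution of \eqref{dir}. (Equivalently, one may argue by contradiction via Proposition \ref{mr} \ref{mr1}: if $K(\varphi)=\{0\}$ then $\delta_{q,k}\R=C_q(\varphi,\infty)=C_q(\varphi,0)=\delta_{q,h}\R$ for all $q$, contradicting $k\neq h$.)

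Finally, for the regularity assertion I would invoke Proposition \ref{reg} \ref{reg2}: every weak solution of \eqref{dir} lies in $C^\alpha_\delta(\overline\Omega)$, so in particular $\bar u\in C^\alpha_\delta(\overline\Omega)$, which completes the proof.

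As for the main obstacle: at this stage there essentially is none. The genuine analytic difficulties — verifying the Cerami condition under two-sided resonance at infinity, and, above all, computing the critical groups at zero under the double-sided linear constraint of ${\bf H}_1$ \ref{h14} — have already been absorbed into Lemmas \ref{ex-c} and \ref{ex-z}. The theorem itself is a short bookkeeping argument; the only points deserving a word of care are the reduction to a finite critical set and the remark that $\lambda_k\neq\lambda_h$ implies $k\neq h$ (the converse is false because of multiplicity, but is not needed here).
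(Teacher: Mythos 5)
Your proposal is correct and follows exactly the same route as the paper: reduce to the case of a finite critical set, note $C_k(\varphi,0)=0$ (since $k\neq h$) while $C_k(\varphi,\infty)\neq 0$ from Lemmas \ref{ex-inf} and \ref{ex-z}, apply Proposition \ref{mr} \ref{mr2} to produce $\bar u\neq 0$, and invoke Proposition \ref{reg} \ref{reg2} for regularity. The only difference is that you make explicit the (harmless) remark that $\lambda_k\neq\lambda_h$ forces $k\neq h$, which the paper leaves implicit.
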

\begin{proof}
We are still assuming that $K(\varphi)$ is finite, otherwise the conclusion is trivial. From Lemmas \ref{ex-inf} and \ref{ex-z} we know that $C_k(\varphi,0)=0$ and $C_k(\varphi,\infty)\neq 0$. By Proposition \ref{mr} \ref{mr2}, there exists $\bar u\in K(\varphi)$ s.t.\ $C_k(\varphi,\bar u)\neq 0$. So, in particular $\bar u\neq 0$. By the definition of (weak) solution and Proposition \ref{reg} \ref{reg2} we have $\bar u\in C^\alpha_\delta(\overline\Omega)$ for a convenient $\alpha>0$ and $\bar u$ solves \eqref{dir}.
\end{proof}

\section{Multiplicity result}\label{sec4}

\noindent
In order to obtain multiple solutions for problem \eqref{dir}, we need to slightly modify our hypotheses on $f$, avoiding the case $k=1$ in ${\bf H}_1$ \ref{h13} and letting '$h=0$' in ${\bf H}_1$ \ref{h14}:

\begin{itemize}[leftmargin=1cm]
\item[${\bf H}_2$] $f:\Omega\times\R\to\R$ is s.t.\ $f(\cdot,t)$ is measurable in $\Omega$ for all $t\in\R$, $f(x,\cdot)\in C^1(\R)$ for a.e.\ $x\in\Omega$, we set for all $(x,t)\in\Omega\times\R$
\[F(x,t)=\int_0^t f(x,\tau)\,d\tau.\]
Morerover:
\begin{enumroman}
\item\label{h21} for all $\rho>0$ there exists $a_\rho\in L^\infty(\Omega)_+$ s.t.\ $|f(x,t)|\le a_\rho(x)$ for a.e.\ $x\in\Omega$ and all $|t|\le\rho$;
\item\label{h22} $\displaystyle\lim_{|t|\to\infty}\big(f(x,t)t-2F(x,t)\big)=+\infty$ uniformly for a.e.\ $x\in\Omega$;
\item\label{h23} there exists an integer $k\ge 2$ s.t.\ uniformly for a.e.\ $x\in\Omega$
\[\lambda_k\le\liminf_{|t|\to \infty}\frac{f(x,t)}{t}\le\limsup_{|t|\to\infty}\frac{f(x,t)}{t}\le\lambda_{k+1};\]
\item\label{h24} there exists a function $\eta_0\in L^\infty(\Omega)$ s.t.\ $0\le\eta_0(x)\le\lambda_1$ for a.e.\ $x\in\Omega$, with $\eta_0\not\equiv\lambda_1$, and $\delta_0>0$ s.t.\ for a.e.\ $x\in\Omega$ and all $|t|\le\delta_0$
\[\frac{F(x,t)}{t^2}\le\frac{\eta_0(x)}{2}.\]
\end{enumroman}
\end{itemize}

\begin{example}
Let $f\in C^1(\R)$ be defined by
\[f(t)=\begin{cases}
\lambda_k t-(\mu-\lambda_k)\Big(\frac{1}{2}\ln|t|+\sqrt{|t|}\big) & \text{if $t<-1$} \\
\mu t & \text{if $|t|\le 1$} \\
\lambda_k t+(\mu-\lambda_k)\Big(\frac{1}{2}\ln|t|+\sqrt{|t|}\big) & \text{if $t>1$,}
\end{cases}\]
where $\mu\in(0,\lambda_1)$, and $k\ge 2$ is an integer. Then $f$ satisfies ${\bf H}_2$ \ref{h21} - \ref{h24}.
\end{example}

\noindent
We introduce truncated energy functionals by setting for all $(x,t)\in\Omega\times\R$
\[f_\pm(x,t)=f(x,\pm t^\pm), \ F_\pm(x,t)=\int_0^t f_\pm(x,\tau)\,d\tau,\]
and for all $u\in\s$
\[\varphi_\pm(u)=\frac{\|u\|^2}{2}-\int_\Omega F_\pm(x,u)\,dx.\]
We establish now some properties of these functionals:

\begin{lemma}\label{mu-c}
If ${\bf H}_2$ \ref{h21} - \ref{h24} hold, then $\varphi_\pm\in C^1(\s)$ and satisfy \eqref{c}. Moreover, if $u\in K(\varphi_\pm)\setminus\{0\}$, then $u\in\pm{\rm int}\,(C^0_\delta(\overline\Omega)_+)$ and $u$ is a solution of \eqref{dir}.
\end{lemma}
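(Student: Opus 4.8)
The proof splits naturally into three parts: (i) regularity $\varphi_\pm\in C^1(\s)$; (ii) the Cerami condition; (iii) the sign and regularity of non-trivial critical points. For (i), observe that $f_\pm$ is a Carath\'eodory function satisfying the same growth bound ${\bf H}_0$ as $f$ (since $|f_\pm(x,t)|=|f(x,\pm t^\pm)|\le a_0(1+|t|^{p-1})$), so the composition operator $u\mapsto f_\pm(\cdot,u)$ is continuous from $L^p(\Omega)$ to $L^{p'}(\Omega)$; combined with the compact embedding $\s\hookrightarrow L^p(\Omega)$ this gives $\varphi_\pm\in C^1(\s)$ with $\varphi_\pm'(u)(v)=\langle u,v\rangle-\int_\Omega f_\pm(x,u)v\,dx$ (note $\varphi_\pm$ is only $C^1$, not $C^2$, because $t\mapsto t^\pm$ is not smooth).

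\textbf{The Cerami condition.} For (ii) I would essentially replay the proof of Lemma \ref{ex-c} verbatim, applied to $\varphi_+$ (and symmetrically to $\varphi_-$). Take a Cerami sequence $(u_n)$: as in \eqref{ex-c2} one obtains $\int_\Omega(f_+(x,u_n)u_n-2F_+(x,u_n))\,dx\le M'$. The key observation is that on the set where $u_n<0$ one has $f_+(x,u_n)=f(x,0)=0$ and $F_+(x,u_n)=0$, so the analysis reduces to the positive part $u_n^+$; writing $w_n=u_n\|u_n\|^{-1}$ and arguing by contradiction that $\|u_n\|\to\infty$, the weak limit $w$ of $w_n$ solves a weighted eigenvalue problem of the form $\fl w=\eta_\infty(x)w^+$ with $\lambda_k\le\eta_\infty\le\lambda_{k+1}$ a.e.\ in the set where $w>0$. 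Testing with $w^-$ forces $w^-=0$ (as $\lambda_1>0$), so $w\ge 0$, $w\neq 0$, and $w$ is a genuine eigenfunction of \eqref{ev} with weight $\eta_\infty$ at eigenvalue $1$; the three-case analysis (non-resonance via Proposition \ref{mon}; left/right resonance via the u.c.p.\ of Proposition \ref{ucp} together with ${\bf H}_2$ \ref{h22} and Fatou) yields a contradiction exactly as in Lemma \ref{ex-c}. Boundedness of $(u_n)$ then gives strong convergence by the standard $\langle u_n,u_n-u\rangle\to0$ argument.

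\textbf{Sign and regularity of critical points.} For (iii), let $u\in K(\varphi_+)\setminus\{0\}$, so $\langle u,v\rangle=\int_\Omega f_+(x,u)v\,dx$ for all $v\in\s$. Testing with $v=u^-=-\min\{u,0\}$ (more precisely $v=-u^-$, which lies in $\s$), and using that $f_+(x,u)=f(x,u^+)$ depends only on $u^+$ so that $f_+(x,u)u^-=f(x,0)u^-=0$ on $\{u<0\}$ and vanishes on $\{u\ge0\}$ as well, one gets $-\|u^-\|^2=\langle u,-u^-\rangle=0$, hence $u^-=0$, i.e.\ $u\in\s_+$. Then $u$ solves \eqref{dir} (since $f_+(x,u)=f(x,u)$ on $\Omega$ when $u\ge 0$). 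To invoke the Hopf Lemma (Proposition \ref{hl}) I need $f(x,t)\ge -ct$ for $t\ge 0$; this follows from ${\bf H}_2$ \ref{h21}, \ref{h23}: the growth bound ${\bf H}_0$ gives $|f(x,t)|\le a_0(1+t^{p-1})$, and since $f(x,0)=0$ (from ${\bf H}_2$ \ref{h24}, as $F(x,t)/t^2$ bounded near $0$ forces $f(x,0)=0$) with $f(x,\cdot)\in C^1$, one has $f(x,t)\ge -ct$ on any bounded interval $[0,\rho]$; for $t>\rho$ one uses ${\bf H}_2$ \ref{h23} to get $|f(x,t)|\le\beta t$, so $f(x,t)\ge-\beta t$. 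Hence Proposition \ref{hl} applies and, since $u\neq0$, yields $u\in{\rm int}(C^0_\delta(\overline\Omega)_+)$; in particular by Proposition \ref{reg} \ref{reg2} $u\in C^\alpha_\delta(\overline\Omega)$. The case of $\varphi_-$ is symmetric, replacing $u$ by $-u$.

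\textbf{Main obstacle.} The only genuinely delicate point is the Cerami condition for $\varphi_+$: one must carefully check that the truncation does not spoil the three-case resonance argument. The subtlety is that $\eta_\infty$ is only controlled ($\lambda_k\le\eta_\infty\le\lambda_{k+1}$) where $w>0$; however, since the limiting equation is $\fl w=\eta_\infty w^+$ and $w\ge 0$, on $\{w=0\}$ both sides vanish and we may harmlessly redefine $\eta_\infty$ there (e.g.\ $\eta_\infty\equiv\lambda_k$) to make it a bona fide admissible weight, after which Propositions \ref{mon} and \ref{ucp} apply on all of $\Omega$. I would flag this redefinition explicitly to keep the argument clean.
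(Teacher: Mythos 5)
Your proposal is correct and follows the paper's skeleton closely: regularity of the truncated functional, blow-up of a Cerami sequence to a weighted eigenvalue problem, and the sign of critical points via testing with $-u^-$ followed by the Hopf lemma. The one genuine divergence is how the contradiction is reached in the Cerami argument. You replay the full three-case resonance analysis of Lemma \ref{ex-c} (non-resonance via Proposition \ref{mon}; left/right resonance via unique continuation, ${\bf H}_2$ \ref{h22} and Fatou), together with the redefinition of $\eta_\infty$ on $\{w=0\}$ that you rightly flag as necessary. The paper instead exploits the truncation: once the normalized limit $w$ is known to be non-negative, non-trivial, and an eigenfunction with eigenvalue $1$ of a weight $\eta_+\ge\lambda_k$, it notes that $\lambda_1(\eta_+)<\lambda_2(\eta_+)\le\lambda_k(\eta_+)\le\lambda_k(\lambda_k)=1$ (here $k\ge 2$ is essential), so $1$ is a non-principal eigenvalue of $\sigma(\eta_+)$ and by Proposition \ref{spe} \ref{spe2} its eigenfunctions must be nodal --- contradicting $w\ge 0$ in one line. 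This shortcut dispenses with ${\bf H}_2$ \ref{h22}, Fatou, and unique continuation in this lemma, and is precisely where the sign constraint created by the truncation pays off; your longer route is nevertheless valid. (Incidentally, the paper first shows $u_n^-\to 0$ directly by testing the Cerami inequality with $-u_n^-$ and then blows up only $u_n^+$, whereas you normalize the whole $u_n$ and extract $w^-=0$ from the limit equation; both work. Also, both you and the paper pass somewhat quickly over the bound $f_+(x,t)\ge -ct$ on bounded $t$-ranges needed for Proposition \ref{hl}, which strictly requires combining $f(x,\cdot)\in C^1$, $f(x,0)=0$ and ${\bf H}_2$ \ref{h21} on the bounded range $[0,\|u\|_\infty]$ of the fixed solution; I do not count this against you.)
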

\begin{proof}
We only deal with $\varphi_+$ (the argument for $\varphi_-$ is similar). By ${\bf H}_2$ \ref{h24} we have $f(x,0)=0$ for a.e.\ $x\in\Omega$, hence $f:\Omega\times\R\to\R$ is a Carath\'eodory mapping. So, $\varphi_+$ turns out to be of class $C^1$ in $\s$ (note that the truncation produces a loss of regularity) with derivative given for all $u,v\in\s$ by
\[\varphi'_+(u)(v)=\langle u,v\rangle -\int_\Omega f_+(x,u)v\,dx.\]
We prove that $\varphi_+$ sasitsfies \eqref{c}: let $(u_n)$ be a sequence in $\s$ s.t.\ $|\varphi_+(u_n)|\le M$ for all $n\ge 1$, and $(1+\|u_n\|)\varphi'_+(u_n)\to 0$ in $H^{-s}(\Omega)$. Then there exists $(\eps_n)$ in $\R$ s.t.\ $\eps_n\to 0^+$ and for all $n\ge 1$, $v\in\s$
\beq\label{mu-c1}
\Big|\langle u_n,v\rangle -\int_\Omega f_+(x,u_n)v\,dx\Big|\le\frac{\eps_n\|v\|}{1+\|u_n\|}.
\eeq
Choosing $v=-u_n^-$ in \eqref{mu-c1} yields
\[\|u_n^-\| \le \langle u_n,-u_n^-\rangle \le \frac{\eps_n\|u_n^-\|}{1+\|u_n\|},\]
and the latter tends to $0$ as $n\to\infty$, so $u^-_n\to 0$ in $\s$. We claim now that $(u^+_n)$ is bounded in $\s$. We argue by contradiction, assuming that $\|u^+_n\|\to\infty$. We set $w_n=u^+_n\|u^+_n\|^{-1}$, so $\|w_n\|=1$, and $w_n(x)\ge 0$ for all $n\ge 1$ and a.e.\ $x\in\Omega$. Passing to a subsequence, we have $w_n\rightharpoonup w$ in $\s$ and $w_n\to w$ in $L^2(\Omega)$, in particular $w(x)\ge 0$ for a.e.\ $x\in\Omega$. By \eqref{mu-c1} and $\|u^-_n\|\to 0$, we can find a sequence $(\eps'_n)$ s.t.\ $\eps'_n\to 0$ and for all $n\ge 1$, $v\in\s$
\[\Big|\langle u^+_n,v\rangle -\int_\Omega f_+(x,u^+_n)v\,dx\Big|\le\eps'_n\|v\|.\]
Dividing by $\|u^+_n\|$ we get
\beq\label{mu-c2}
\Big|\langle w_n,v\rangle -\int_\Omega \frac{f_+(x,u^+_n)}{\|u^+_n\|}v\,dx\Big|\le\frac{\eps'_n\|v\|}{\|u^+_n\|}.
\eeq
Reasoning as in the proof of Lemma \ref{ex-c}, from ${\bf H}_2$ \ref{h23} we deduce the existence of $\eta_+\in L^\infty(\Omega)$ s.t.\ $\lambda_k\le\eta_+(x)\le\lambda_{k+1}$ for a.e.\ $x\in\Omega$ and
\[\frac{f_+(\cdot, u^+_n)}{\|u^+_n\|}\rightharpoonup\eta_+ w \ \text{in $L^2(\Omega)$.}\]
Choose $v=w_n-w$ in \eqref{mu-c2}, then
\[\big|\langle w_n,w_n-w\rangle\big| \le \Big|\int_\Omega\frac{f_+(x,u^+_n)}{\|u^+_n\|}(w_n-w)\,dx\Big|+\frac{\eps'_n\|w_n-w\|}{\|u^+_n\|},\]
and the latter tends to $0$ as $n\to\infty$. As before, we get $w_n\to w$ in $\s$, in particular $\|w\|=1$. Moreover, passing to the limit in \eqref{mu-c2} we see that $w$ is a solution of the \eqref{ev}-type problem
\beq\label{mu-c3}
\begin{cases}
\fl w=\eta_+(x)w & \text{in $\Omega$} \\
w=0 & \text{in $\Omega^c$.}
\end{cases}
\eeq
So we have $1\in\sigma(\eta_+)$ with $w$ as an associated eigenfunction. Since $\eta_+(x)\ge\lambda_k$ for a.e.\ $x\in\Omega$, we have
\[\lambda_k(\eta_+)\le\lambda_k(\lambda_k)=1,\]
and (recalling that $k\ge 2$) we deduce that $w$ is a non-principal $\eta_+$-eigenfunction. By Proposition \ref{spe} \ref{spe2} we know that $w$ must change sign, a contradiction. Thus, $(u^+_n)$ is bounded in $\s$.
\vskip2pt
\noindent
So far we have proved that $u_n=u^+_n-u^-_n$ form a bounded sequence in $\s$. Passing to a subsequence, $u_n\rightharpoonup u$ in $\s$ and $u_n\to u$ in $L^2(\Omega)$. Choosing $v=u_n-u$ in \eqref{mu-c1} then leads to $u_n\to u$ in $\s$.
\vskip2pt
\noindent
Finally, let $u\in K(\varphi_+)\setminus\{0\}$. We have for all $v\in\s$
\beq\label{mu-c4}
\langle u,v\rangle=\int_\Omega f_+(x,u)v\,dx.
\eeq
Choosing $v=-u^-\in\s$ yields
\[\|u^-\|^2\le\langle u,-u^-\rangle = \int_\Omega f_+(x,u)(-u^-)\,dx = 0,\]
hence $u^-=0$, i.e., $u(x)\ge 0$ for a.e.\ $x\in\Omega$. By ${\bf H}_2$ \ref{h23}, \ref{h24} we see that there is $c>0$ s.t.\ $f_+(x,t)\ge -ct$ for a.e.\ $x\in\Omega$ and all $t\in\R^+$, so by Proposition \ref{hl} we have $u\in{\rm int}\,(C^0_\delta(\overline\Omega)_+)$. In particular, $u(x)>0$ for all $x\in\Omega$, so in \eqref{mu-c4} we can replace $f_+(x,u)$ with $f(x,u)$ and $u$ turns out to be a solution of \eqref{dir}.
\end{proof}

\noindent
The following lemmas determine the structure of the critical sets of functionals $\varphi_\pm$. Without loss of generality, we shall assume henceforth that $K(\varphi_\pm)$ is {\em finite}.

\begin{lemma}\label{mu-min}
If ${\bf H}_2$ \ref{h21} - \ref{h24} hold, then $\varphi_\pm$ has a strict local minimum at $0$.
\end{lemma}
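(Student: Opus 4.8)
The plan is to show that $0$ is a strict local minimizer of $\varphi_+$ on $\s$ (the argument for $\varphi_-$ being symmetric), working in the Hilbert-space topology. The natural idea is to estimate $\varphi_+(u)$ from below for $u$ small in $\s$ by splitting the contribution of $F_+(x,u)$ into a region where the pointwise bound from ${\bf H}_2$~\ref{h24} applies and a region where it does not, and then absorb the bad region using the growth condition ${\bf H}_2$~\ref{h23} together with the compact embedding $\s\hookrightarrow L^2(\Omega)$ and a higher-integrability interpolation.

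First I would fix $\eta_0$ and $\delta_0$ as in ${\bf H}_2$~\ref{h24}, so that $F_+(x,t)\le\tfrac{1}{2}\eta_0(x)t^2$ for a.e.\ $x\in\Omega$ and all $|t|\le\delta_0$ (note $F_+(x,t)=F(x,t)$ for $t\ge0$ and $F_+(x,t)=0$ for $t\le0$, so this is inherited directly). For a general $u\in\s$ set $\Omega_1=\{|u|\le\delta_0\}$ and $\Omega_2=\Omega\setminus\Omega_1$. On $\Omega_1$ we get $\int_{\Omega_1}F_+(x,u)\,dx\le\tfrac12\int_\Omega\eta_0(x)(u^+)^2\,dx$, and since $0\le\eta_0\le\lambda_1$ with $\eta_0\not\equiv\lambda_1$, Proposition~\ref{mon} (applied with $\eta_1=\eta_0$, $\eta_2=\lambda_1$, after the usual approximation to handle the $C^1$ requirement — or more simply the strict u.c.p.\ inequality of Proposition~\ref{ucp} as in Lemma~\ref{ex-z}) gives a constant $c_0>0$ with $\|u\|^2-\int_\Omega\eta_0(x)u^2\,dx\ge c_0\|u\|^2$ for all $u\in\s$; this yields $\tfrac12\|u\|^2-\tfrac12\int_{\Omega_1}\eta_0 (u^+)^2\ge \tfrac{c_0}{2}\|u\|^2$. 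On $\Omega_2$ I would use ${\bf H}_2$~\ref{h21} and \ref{h23} to bound $|F_+(x,u)|\le c_1 + c_2|u|^2$ pointwise, hence $\int_{\Omega_2}F_+(x,u)\,dx\le c_1|\Omega_2| + c_2\int_{\Omega_2}u^2\,dx$; since $|\Omega_2|\le\delta_0^{-2}\|u\|_2^2$ and $\int_{\Omega_2}u^2\,dx\le |\Omega_2|^{1-2/r}\|u\|_r^2$ for some $r\in(2,2^*_s)$, both terms are $o(\|u\|_2^2)$ as $\|u\|\to0$ — more precisely $\le \varepsilon\|u\|_2^2$ once $\|u\|$ is small — and by the continuous embedding this is $\le C\varepsilon\|u\|^2$. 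Combining, for $\|u\|$ small enough, $\varphi_+(u)\ge(\tfrac{c_0}{2}-C\varepsilon)\|u\|^2>0=\varphi_+(0)$ whenever $u\neq0$.

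The main obstacle is the analysis on $\Omega_2$: the hypothesis ${\bf H}_2$~\ref{h24} only controls $F$ near zero, so when $|u(x)|>\delta_0$ one loses the good sign information, and one must verify that the measure of $\Omega_2$ shrinks fast enough (quantitatively, like $o(\|u\|^2)$) to be reabsorbed. The cleanest route is the interpolation inequality $\|u\|_{L^2(\Omega_2)}\le |\Omega_2|^{\theta}\|u\|_r$ with $\tfrac1r$-weight and $|\Omega_2|\le\delta_0^{-2}\|u\|_2^2\to0$, exploiting that the $\s\hookrightarrow L^r(\Omega)$ embedding is continuous for some $r>2$; care is needed to make the constants uniform and independent of $u$.

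One subtlety worth flagging: strictness of the minimum requires $\varphi_+(u)>0$ for \emph{every} $u\neq0$ in a neighborhood, not merely $\varphi_+(u)\ge0$. This is automatic from the estimate $\varphi_+(u)\ge\tfrac{c_0}{4}\|u\|^2$ valid on a ball $\|u\|\le\rho$, provided $c_0>0$ strictly — which is exactly where $\eta_0\not\equiv\lambda_1$ enters (via Proposition~\ref{mon} or the strict spectral inequality), mirroring the role this non-degeneracy plays throughout Section~\ref{sec2}. With that in hand the conclusion for $\varphi_-$ follows verbatim by replacing $u^+$ with $u^-$.
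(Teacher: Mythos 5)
Your proposal is correct and follows essentially the same strategy as the paper: the heart of both arguments is the spectral-gap inequality $\|u\|^2-\int_\Omega\eta_0(x)u^2\,dx\ge c_0\|u\|^2$ with $c_0>0$, obtained by a contradiction argument from Proposition \ref{ucp} and $\eta_0\not\equiv\lambda_1$, combined with the near-zero bound from ${\bf H}_2$ \ref{h24}. The only divergence is in how the region where $|u|>\delta_0$ is absorbed: the paper derives the global pointwise estimate $F_+(x,t)\le\tfrac{1}{2}(\eta_0(x)+\eps)t^2+C_\eps|t|^r$ with $r\in(2,2^*_s)$ and concludes from $\varphi_+(u)\ge(\tfrac{c_0}{2}-\eps c_1)\|u\|^2-c_2\|u\|^r$, whereas you split $\Omega$ and control $\int_{\{|u|>\delta_0\}}u^2\,dx$ by Chebyshev plus H\"older interpolation against $\|u\|_r$; both yield $\varphi_+(u)\ge c\|u\|^2$ on a small ball, so the variation is purely technical. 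One small caution you already half-flagged: Proposition \ref{mon} (and Proposition \ref{spe}) requires the weight to be bounded below by a \emph{positive} constant, which $\eta_0$ from ${\bf H}_2$ \ref{h24} need not satisfy, so your fallback via the direct u.c.p.\ contradiction argument (which is what the paper actually does) is the route to take.
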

\begin{proof}
We only deal with $\varphi_+$ (the argument for $\varphi_-$ is similar). Fix $\eps>0$, to be defined more precisely later, and $r\in(2,2^*_s)$. By ${\bf H}_2$ \ref{h22} - \ref{h24} there exists $C_\eps>0$ s.t.\ for a.e.\ $x\in\Omega$ and all $t\in\R$
\beq\label{mu-min1}
F_+(x,t)\le\frac{\eta_0(x)+\eps}{2} t^2+C_\eps |t|^r.
\eeq
We claim that there exists $c_0>0$ s.t.\ for all $u\in\s$
\beq\label{mu-min2}
\|u\|^2-\int_\Omega\eta_0(x)u^2\,dx\ge c_0\|u\|^2.
\eeq
Indeed, by $\eta_0(x)\le\lambda_1$ for a.e.\ $x\in\Omega$ and Proposition \ref{spe} \ref{spe1} we have
\[\|u\|^2-\int_\Omega\eta_0(x)u^2\,dx\ge 0.\]
Now we argue by contradiction, assuming that there is a sequence $(u_n)$ in $\s$ s.t.\ $\|u_n\|=1$ and
\[\|u_n\|^2-\int_\Omega\eta_0(x)u_n^2\,dx\to 0.\]
Passing to a subsequence, we have $u_n\rightharpoonup u$ in $\s$ and $u_n\to u$ in $L^2(\Omega)$. By lower semi-continuity we have
\[\|u\|^2 \le \liminf_n\|u_n\|^2 = \int_\Omega\eta_0(x)u^2\,dx \le \lambda_1\|u\|_2^2.\]
Two cases may occur:
\begin{itemize}[leftmargin=1cm]
\item[$(a)$] if $u=0$, then $u_n\to 0$ in $\s$, against $\|u_n\|=1$;
\item[$(b)$] if $u\neq 0$, then $u$ is a $\lambda_1$-eigenfunction, hence it has u.c.p.\ (Proposition \ref{ucp}) and
\[\|u\|^2= \int_\Omega\eta_0(x)u^2\,dx < \lambda_1\|u\|_2^2,\]
a contradiction.
\end{itemize}
Using \eqref{mu-min1} and \eqref{mu-min2}, and recalling the continuous embeddings $\s\hookrightarrow L^2(\Omega),\,L^r(\Omega)$, we get for all $u\in\s$
\begin{align*}
\varphi_+(u) &\ge \frac{\|u\|^2}{2}-\int_\Omega\Big(\frac{\eta_0(x)+\eps}{2}u^2+C_\eps|u|^r\Big)\,dx \\
&\ge \frac{1}{2}\Big(\|u\|^2-\int_\Omega\eta_0(x)u^2\,dx\Big)-\eps c_1\|u\|^2-c_2\|u\|^r \\
&\ge \Big(\frac{c_0}{2}-\eps c_1\Big)\|u\|^2-c_2\|u\|^r.
\end{align*}
Choosing $\eps\in(0,c_0(2c_1)^{-1})$, we see that the mapping
\[t\mapsto\Big(\frac{c_0}{2}-\eps c_1\Big)t^2-c_2t^r\]
is positive in $(0,\rho_0)$ for $\rho_0>0$ small enough. Then, for all $u\in\s$ with $\|u\|<\rho_0$ we have $\varphi_+(u)>0$. Thus, $0$ is a strict local minimizer for $\varphi_+$.
\end{proof}

\begin{lemma}\label{mu-mp}
If ${\bf H}_2$ \ref{h21} - \ref{h24} hold, then $\varphi_\pm$ has a critical point $u_\pm\in\pm{\rm int}\,(C^0_\delta(\overline\Omega)_+)$ of mountain pass type.
\end{lemma}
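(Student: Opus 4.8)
The plan is to obtain $u_\pm$ as a mountain pass point of the truncated functional $\varphi_\pm$; I will describe the argument for $\varphi_+$, the case of $\varphi_-$ being symmetric. By Lemma \ref{mu-c}, $\varphi_+\in C^1(\s)$ satisfies \eqref{c}, every non-zero critical point of $\varphi_+$ lies in $\mathrm{int}\,(C^0_\delta(\overline\Omega)_+)$ and solves \eqref{dir}, and $\varphi_+(0)=0$. First I would record the mountain pass geometry near $0$: the quantitative estimate in the proof of Lemma \ref{mu-min} (of the form $\varphi_+(u)\ge(c_0/2-\eps c_1)\|u\|^2-c_2\|u\|^r$ with $r>2$ and $c_0/2-\eps c_1>0$) gives, for $\rho>0$ small enough,
\[\inf_{\|u\|=\rho}\varphi_+(u)\ge m_\rho>0=\varphi_+(0),\]
so $0$ is surrounded by a sphere on which $\varphi_+$ is bounded below by a positive constant.

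Next I would build the second valley by moving along the positive first eigenfunction $e_1\in\mathrm{int}\,(C^0_\delta(\overline\Omega)_+)$. Since $e_1>0$ in $\Omega$, for $t>0$ one has $F_+(x,te_1)=F(x,te_1)$. From ${\bf H}_2$ \ref{h21} and \ref{h23} one obtains, for every $\eps>0$, a constant $C_\eps>0$ with $F(x,t)\ge\tfrac{\lambda_k-\eps}{2}t^2-C_\eps$ for a.e.\ $x\in\Omega$ and all $t\in\R$; hence, using $\|e_1\|^2=\lambda_1\|e_1\|_2^2$,
\[\varphi_+(te_1)\le\frac{t^2}{2}\,\|e_1\|_2^2\,(\lambda_1-\lambda_k+\eps)+C_\eps|\Omega|.\]
Since $k\ge 2$ we have $\lambda_k>\lambda_1$, so choosing $\eps\in(0,\lambda_k-\lambda_1)$ makes the coefficient of $t^2$ negative, whence $\varphi_+(te_1)\to-\infty$ as $t\to+\infty$. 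I then fix $e=t_*e_1$ with $\|e\|>\rho$ and $\varphi_+(e)<0<m_\rho$.

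With this geometry I would apply the Mountain Pass Theorem. The value
\[c_+=\inf_{\gamma\in\Gamma}\,\max_{t\in[0,1]}\varphi_+(\gamma(t)),\qquad \Gamma=\bigl\{\gamma\in C([0,1],\s):\gamma(0)=0,\ \gamma(1)=e\bigr\},\]
satisfies $c_+\ge m_\rho>0$, so $0\notin K_{c_+}(\varphi_+)$. Since we are assuming $K(\varphi_+)$ finite, $K_{c_+}(\varphi_+)$ is finite, hence discrete, and the strong form of the Mountain Pass Theorem (in the version detecting critical points of mountain pass type; see, e.g., \cite[Theorem 6.99]{MMP}) yields $u_+\in K_{c_+}(\varphi_+)$ of mountain pass type. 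Then $u_+\neq 0$ together with Lemma \ref{mu-c} gives $u_+\in\mathrm{int}\,(C^0_\delta(\overline\Omega)_+)$ and that $u_+$ solves \eqref{dir}. Running the same argument for $\varphi_-$ along the ray $\{-te_1:t>0\}$ (using the bound on $F$ as $t\to-\infty$) produces $u_-\in-\mathrm{int}\,(C^0_\delta(\overline\Omega)_+)$ of mountain pass type.

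The step I expect to be the main obstacle is the last one: the basic Mountain Pass Theorem only produces \emph{some} critical point at level $c_\pm$, while the statement requires a critical point of \emph{mountain pass type} in the topological (Hofer) sense; bridging this gap needs the refined mountain pass theorem, and the finiteness of $K(\varphi_\pm)$ assumed just before the lemma is exactly what makes its hypotheses hold. A minor point to keep in mind is that the truncation leaves $\varphi_\pm$ only of class $C^1$, so no $C^2$-based tool may be invoked at this stage.
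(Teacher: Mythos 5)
Your proposal is correct and follows essentially the same route as the paper: mountain pass geometry near $0$ from the quantitative estimate of Lemma \ref{mu-min}, unboundedness below along the ray $\tau e_1$ ($\tau>0$) using ${\bf H}_2$ \ref{h23} and $\lambda_k>\lambda_1$, and then the Hofer-type refined Mountain Pass Theorem to obtain a critical point of mountain pass type, concluding via Lemma \ref{mu-c}. The only (immaterial) difference is that you integrate the asymptotic bound on $f$ into a global lower bound $F(x,t)\ge\tfrac{\lambda_k-\eps}{2}t^2-C_\eps$, whereas the paper splits the integral over $\{\tau e_1>\rho\}$ and $\{\tau e_1\le\rho\}$; both give the same conclusion.
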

\begin{proof}
As usual we only deal with $\varphi_+$. From Lemma \ref{mu-min} and \eqref{c} we deduce the existence of $r>0$ s.t.
\[\inf_{\|u\|=r}\varphi_+(u)=m_r>0.\]
Besides, by ${\bf H}_2$ \ref{h23} for all $\beta\in(\lambda_1,\lambda_k)$ there exists $\rho>0$ s.t.\ for a.e.\ $x\in\Omega$ and all $t>\rho$
\[F_+(x,t)\ge\frac{\beta t^2}{2}.\]
Let $e_1\in\s$ be defined as in Proposition \ref{spe}: using ${\bf H}_2$ \ref{h21} we have for all $\tau>0$ and some $c_3>0$
\begin{align*}
\varphi_+(\tau e_1) &= \frac{\tau^2\|e_1\|^2}{2}-\int_\Omega F_+(x,\tau e_1)\,dx \\
&\le \frac{\tau^2\|e_1\|^2}{2}-\int_{\{\tau e_1>\rho\}}\frac{\beta}{2}(\tau e_1)^2\,dx+\int_{\{\tau e_1\le\rho\}} a_\rho(x)\rho\,dx \\
&\le \frac{(\lambda_1-\beta)\tau^2}{2}+c_3,
\end{align*}
and the latter tends to $-\infty$ as $\tau\to\infty$. Thus, for $\tau>0$ big enough we have $\varphi_+(\tau e_1)<0$. We apply then a variant of the Mountain Pass Theorem (see \cite{H}) to conclude that there exists $u_+\in K(\varphi_+)$ of mountain pass type, s.t.\ $\varphi_+(u_+)\ge m_r$, in particular $u_+\neq 0$. By Lemma \ref{mu-c} we conclude.
\end{proof}

\noindent
Now we go back to $\varphi$:

\begin{lemma}\label{mu-g}
If ${\bf H}_2$ \ref{h21} - \ref{h24} hold, then $0,u_+,u_-\in K(\varphi)$, and for all $q\ge 0$
\begin{enumroman}
\item\label{mu-ga} $C_q(\varphi,0)=\delta_{q,0}\R$;
\item\label{mu-gb} $C_q(\varphi,u_+)=\delta_{q,1}\R$;
\item\label{mu-gc} $C_q(\varphi,u_-)=\delta_{q,1}\R$.
\end{enumroman}
\end{lemma}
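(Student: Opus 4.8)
The three assertions are proved in turn. First, ${\bf H}_2$ \ref{h24} forces $f(x,0)=0$ for a.e.\ $x\in\Omega$: since $f(x,\cdot)\in C^1(\R)$ we have $F(x,t)=f(x,0)\,t+o(t)$ as $t\to 0$, and dividing the bound $F(x,t)\le\tfrac{\eta_0(x)}{2}t^2$ (valid for $|t|\le\delta_0$) by $t$ and letting $t\to 0^+$, resp.\ $t\to 0^-$, yields $f(x,0)\le 0$, resp.\ $f(x,0)\ge 0$. Hence $0\in K(\varphi)$, while by Lemmas \ref{mu-mp} and \ref{mu-c} we get $u_\pm\in\pm{\rm int}\,(C^0_\delta(\overline\Omega)_+)$ and $u_\pm$ solves \eqref{dir}, so $u_\pm\in K(\varphi)$ as well. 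Since we are assuming $K(\varphi)$ finite, $0,u_+,u_-$ are isolated critical points of $\varphi$.

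For \ref{mu-ga} I would repeat, for $\varphi$ itself, the estimate used for $\varphi_\pm$ in Lemma \ref{mu-min}. Combining ${\bf H}_2$ \ref{h22}--\ref{h24} (and ${\bf H}_0$, which follows from ${\bf H}_2$) one obtains, for every $\eps>0$ and every $r\in(2,2^*_s)$, a constant $C_\eps>0$ such that $F(x,t)\le\tfrac{\eta_0(x)+\eps}{2}t^2+C_\eps|t|^r$ for a.e.\ $x\in\Omega$ and all $t\in\R$: for $|t|\le\delta_0$ this is \ref{h24}, for $|t|$ large it follows from \ref{h23}, and on the bounded range from ${\bf H}_0$, the excess quadratic term being absorbed into $C_\eps|t|^r$ since $r>2$. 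Exactly as in \eqref{mu-min2}, the quadratic form $u\mapsto\|u\|^2-\int_\Omega\eta_0u^2$ is coercive on $\s$ (using $\eta_0\le\lambda_1$, $\eta_0\not\equiv\lambda_1$, and the u.c.p.\ of $\lambda_1$-eigenfunctions from Proposition \ref{ucp}, which applies since the weight $1$ is smooth). Therefore $\varphi(u)\ge\bigl(\tfrac{c_0}{2}-\eps c_1\bigr)\|u\|^2-c_2\|u\|^r$, which for $\eps$ small enough is strictly positive on a punctured ball around $0$, i.e.\ $0$ is a strict local minimizer of $\varphi$. Since $\varphi$ satisfies \eqref{c} (Lemma \ref{ex-c}, as ${\bf H}_2$ \ref{h21}--\ref{h23} imply ${\bf H}_1$ \ref{h11}--\ref{h13}), Proposition \ref{cg-min} gives $C_q(\varphi,0)=\delta_{q,0}\R$.

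For \ref{mu-gb} and \ref{mu-gc} I would first reduce to the truncated functionals. Since $u_\pm\in\pm{\rm int}\,(C^0_\delta(\overline\Omega)_+)$, passing to the dense subspace $X=\s\cap C^0_\delta(\overline\Omega)$ as in the proof of Lemma \ref{ex-z}, there is an $X$-neighborhood of $u_\pm$ on which every function has the sign of $u_\pm$, so that $\tilde\varphi$ and $\tilde\varphi_\pm$ coincide there; by excision and the invariance of critical groups under restriction to a dense subspace one gets $C_q(\varphi,u_\pm)=C_q(\varphi_\pm,u_\pm)$ for all $q$. As $u_\pm$ is of mountain pass type for $\varphi_\pm$ (Lemma \ref{mu-mp}), Proposition \ref{cg-mp1} yields $C_1(\varphi_\pm,u_\pm)\neq 0$, hence $C_1(\varphi,u_\pm)\neq 0$. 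Now I exploit that $\varphi\in C^2(\s)$ (whereas $\varphi_\pm$ is only $C^1$): since $f'_t(\cdot,u_\pm)\in L^\infty(\Omega)$ ($u_\pm$ being bounded) and $\s\hookrightarrow L^2(\Omega)$ is compact, the linear operator representing $\varphi''(u_\pm)$ is a compact perturbation of the identity, so the Morse index $m_\pm$ and the nullity $\nu_\pm$ of $\varphi$ at $u_\pm$ are finite; and if $m_\pm=0$ then $\varphi''(u_\pm)\ge 0$, every $v\in\ker\varphi''(u_\pm)$ solves the linearized weighted problem $\fl v=f'_t(x,u_\pm)v$ in $\Omega$, $v=0$ in $\Omega^c$, and $1$ must be the bottom — hence the simple first — eigenvalue of that problem, so $\nu_\pm=1$. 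With $C_1(\varphi,u_\pm)\neq 0$ and these facts, Proposition \ref{cg-mp2} gives $C_q(\varphi,u_\pm)=\delta_{q,1}\R$, completing the proof.

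The routine parts are the reductions to $X$ and to $\varphi_\pm$, which mirror the arguments of Lemmas \ref{ex-z} and \ref{mu-c}--\ref{mu-mp}. The main obstacle is the bookkeeping of the Morse index and nullity of $\varphi$ at $u_\pm$ needed to invoke Proposition \ref{cg-mp2}: establishing ``$m_\pm=0\Rightarrow\nu_\pm\le 1$'' is delicate because the linearization weight $f'_t(x,u_\pm)$ is not a priori bounded below by a positive constant, so Proposition \ref{spe} cannot be applied directly; one has to argue the one-dimensionality of $\ker\varphi''(u_\pm)$ by hand (e.g.\ by comparison with a strictly positive weight, or by exploiting that the mountain-pass structure at $u_\pm$ already rules out $C_q(\varphi,u_\pm)=\delta_{q,0}\R$, forcing $\nu_\pm\ge 1$, and then excluding $\nu_\pm\ge 2$).
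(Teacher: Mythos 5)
Your treatment of $0$ and the overall architecture for $u_\pm$ follow the paper, but your reduction from $\varphi_\pm$ to $\varphi$ uses a genuinely different mechanism. The paper joins the two functionals by the affine homotopy $h_+(t,u)=(1-t)\varphi(u)+t\varphi_+(u)$ and shows that $u_+$ is a critical point of $h_+(t,\cdot)$ isolated uniformly in $t$: any $u_n\in K(h_+(t_n,\cdot))$ with $u_n\to u_+$ in $H^s_0(\Omega)$ is, by Proposition \ref{reg} and the compact embedding $C^\alpha_\delta(\overline\Omega)\hookrightarrow C^0_\delta(\overline\Omega)$, eventually in ${\rm int}\,(C^0_\delta(\overline\Omega)_+)$, hence a critical point of $\varphi_+$, contradicting the finiteness of $K(\varphi_+)$; homotopy invariance then transfers $C_1\neq 0$. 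You instead localize: since $u_+\in{\rm int}\,(C^0_\delta(\overline\Omega)_+)$, the restrictions of $\varphi$ and $\varphi_+$ to $X=H^s_0(\Omega)\cap C^0_\delta(\overline\Omega)$ coincide on an $X$-neighborhood of $u_+$, and excision plus the dense-subspace invariance already used in Lemma \ref{ex-z} give the stronger conclusion $C_q(\varphi,u_+)=C_q(\varphi_+,u_+)$ for all $q$. Both routes rest on the same two ingredients (the interior-cone location of $u_+$ and passage through the $\delta^s$-weighted topology) and both are legitimate; yours is more direct, while the paper's avoids having to justify the dense-subspace invariance for the merely $C^1$ functional $\varphi_+$ --- a point you should verify if you keep your version, since in Lemma \ref{ex-z} that invariance is invoked only for restrictions of $C^2$ functionals.

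One substantive remark on the final step. You are right that Proposition \ref{cg-mp2} requires $\nu_0\le 1$ when $m_0=0$, and that the paper passes over this condition silently; but your sketched justification (``$1$ must be the simple first eigenvalue of the linearized problem'') does not stand as written, because the weight $f'_t(x,u_\pm)$ need not be bounded below by a positive constant, so Proposition \ref{spe} \ref{spe1} is unavailable --- exactly the difficulty you flag. The standard repair is the constant-sign argument for kernel elements: if $m_0=0$ the form $v\mapsto\varphi''(u_\pm)(v,v)=\|v\|^2-\int_\Omega f'_t(x,u_\pm)v^2\,dx$ is nonnegative and, being nonnegative, vanishes exactly on its kernel; for $v_0$ in the kernel one has $\varphi''(u_\pm)(|v_0|,|v_0|)\le\varphi''(u_\pm)(v_0,v_0)=0$, so $|v_0|$ also lies in the kernel and solves the linearized equation, and Proposition \ref{hl} (applicable with $c=\|f'_t(\cdot,u_\pm)\|_\infty$) forces $|v_0|\in{\rm int}\,(C^0_\delta(\overline\Omega)_+)$ unless $v_0=0$. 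Two linearly independent kernel elements would then yield a nontrivial combination vanishing at some interior point yet still of constant sign, a contradiction; hence $\nu_0\le 1$. With that patch your proof is complete.
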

\begin{proof}
Reasoning as in Lemma \ref{mu-min}, it can be seen that $0$ is as well a strict local minimizer of $\varphi$, so by Proposition \ref{cg-min} we have \ref{mu-ga}.
\vskip2pt
\noindent
From Lemma \ref{mu-mp} and Proposition \ref{cg-mp1} we know that $C_1(\varphi_+,u_+)\neq 0$. Set for all $(t,u)\in[0,1]\times\s$
\[h_+(t,u)=(1-t)\varphi(u)+t\varphi_+(u).\]
Then, for all $t\in[0,1]$ $h_+(t,\cdot)\in C^1(\s)$ and $u_+\in K(h_+(t,\cdot))$. We claim that $u_+$ is an isolated critical point of $h_+(t,\cdot)$, uniformly with respect to $t\in[0,1]$. Arguing by contradiction, assume that there exist sequences $(t_n)$ in $[0,1]$ and $(u_n)$ in $\s\setminus\{u_+\}$, respectively, s.t.\ $u_n\in K(h_+(t_n,\cdot))$ for all integer $n\ge 1$ and $u_n\to u_+$ in $\s$. So, for all $n\ge 1$ $u_n$ solves the problem
\beq\label{mu-g1}
\begin{cases}
\fl u_n=(1-t_n)f(x,u_n)+t_nf_+(x,u_n) & \text{in $\Omega$} \\
u_n=0 & \text{in $\Omega^c$.}
\end{cases}
\eeq
By ${\bf H}_2$ \ref{h21} and Proposition \ref{reg}, the sequence $(u_n)$ is bounded in $C^\alpha_\delta(\overline\Omega)$. So, by the compact embedding $C^\alpha_\delta(\overline\Omega)\hookrightarrow C^0_\delta(\overline\Omega)$, passing to a subsequence we have $u_n\to u_+$ in $C^0_\delta(\overline\Omega)$. By Lemma \ref{mu-mp}, for $n\ge 1$ big enough we have $u_n\in{\rm int}\,(C^0_\delta(\overline\Omega)_+)$, in particular $u_n(x)>0$ for all $x\in\Omega$. So, \eqref{mu-g1} reduces to
\[\begin{cases}
\fl u_n=f_+(x,u_n) & \text{in $\Omega$} \\
u_n=0 & \text{in $\Omega^c$,}
\end{cases}\]
i.e., $u_n\in K(\varphi_+)$. Thus, $\varphi_+$ has infinitely many critical points, a contradiction.
\vskip2pt
\noindent
Homotopy invariance of critical groups (see \cite[Theorem 5.6]{C}) then yields $C_1(\varphi,u_+)\neq 0$. Now recall that (different from $\varphi_+$) the functional $\varphi$ is of class $C^2$ with finite Morse index and nullity at $u_+$, so by Proposition \ref{cg-mp2} the latter condition improves to \ref{mu-gb}.
\vskip2pt
\noindent
The argument for \ref{mu-gc} is analogous to that for \ref{mu-gb} (with $\varphi_+$ replaced by $\varphi_-$), so the proof is concluded.
\end{proof}

\noindent
Finally, we can prove our multiplicity result:

\begin{theorem}\label{mu}
If ${\bf H}_2$ \ref{h21} - \ref{h24} hold, then \eqref{dir} has at least three non-trivial solutions $u_+,u_-,\tilde u\in C^\alpha_\delta(\overline\Omega)$, with $u_-(x)<0<u_+(x)$ for all $x\in\Omega$.
\end{theorem}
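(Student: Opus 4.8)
The plan is to harvest the two sign-definite solutions directly from the preparatory lemmas, and then to extract a third one by a Morse-theoretic argument on $\varphi$ that mirrors the proof of Theorem~\ref{ex}, now exploiting the hypothesis $k\ge 2$.

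First I would record that Lemma~\ref{mu-mp} already yields critical points $u_\pm\in\pm{\rm int}\,(C^0_\delta(\overline\Omega)_+)$ of the truncated functionals $\varphi_\pm$, and that Lemma~\ref{mu-c} identifies these as genuine solutions of \eqref{dir}; by Proposition~\ref{reg}~\ref{reg2} we have $u_\pm\in C^\alpha_\delta(\overline\Omega)$, and membership in $\pm{\rm int}\,(C^0_\delta(\overline\Omega)_+)$ forces $u_-(x)<0<u_+(x)$ for all $x\in\Omega$; in particular $u_+,u_-$ are non-trivial and mutually distinct. So the only remaining task is to produce a solution $\tilde u\notin\{0,u_+,u_-\}$. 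If $K(\varphi)$ is infinite this is immediate, since every critical point of $\varphi$ solves \eqref{dir} and lies in $C^\alpha_\delta(\overline\Omega)$ by Proposition~\ref{reg}, so one may take any $\tilde u\in K(\varphi)\setminus\{0,u_+,u_-\}$; henceforth I therefore assume $K(\varphi)$ finite.

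The argument then runs as follows. Since ${\bf H}_2$ \ref{h21}--\ref{h23} coincide with ${\bf H}_1$ \ref{h11}--\ref{h13} (with the extra information $k\ge 2$) and ${\bf H}_2$ implies ${\bf H}_0$, we have $\varphi\in C^2(\s)$, Lemma~\ref{ex-c} gives that $\varphi$ satisfies \eqref{c}, and Lemma~\ref{ex-inf} gives $C_q(\varphi,\infty)=\delta_{q,k}\R$ for all $q\ge 0$. On the other hand, Lemma~\ref{mu-g} gives $0,u_+,u_-\in K(\varphi)$ with $C_q(\varphi,0)=\delta_{q,0}\R$ and $C_q(\varphi,u_+)=C_q(\varphi,u_-)=\delta_{q,1}\R$; because $k\ge 2$ this means $C_k(\varphi,0)=C_k(\varphi,u_+)=C_k(\varphi,u_-)=0$, whereas $C_k(\varphi,\infty)=\R\ne 0$. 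Proposition~\ref{mr}~\ref{mr2} — whose hypotheses ($\varphi\in C^2$ satisfying \eqref{c}, $K(\varphi)$ finite and containing $0$, $C_k(\varphi,0)=0$, $C_k(\varphi,\infty)\ne 0$) are all met — then furnishes $\tilde u\in K(\varphi)$ with $C_k(\varphi,\tilde u)\ne 0$, and the vanishing of the degree-$k$ groups at $0$ and at $u_\pm$ forces $\tilde u\notin\{0,u_+,u_-\}$. Thus $\tilde u$ is a non-trivial solution of \eqref{dir} distinct from $u_\pm$, and $\tilde u\in C^\alpha_\delta(\overline\Omega)$ again by Proposition~\ref{reg}~\ref{reg2}.

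The crux — and the reason $k\ge 2$ (rather than $k\ge 1$, as in Theorem~\ref{ex}) is imposed here — is that the separation of $\tilde u$ from the previously found critical points hinges on $0$ and $u_\pm$ having critical groups supported in degrees strictly below $k$: for $0$ this is the local-minimum computation (Proposition~\ref{cg-min}), while for $u_\pm$ it is the mountain-pass information of Proposition~\ref{cg-mp1} sharpened to $C_q(\varphi,u_\pm)=\delta_{q,1}\R$ via the finiteness of the Morse index and nullity of the $C^2$ functional $\varphi$ (Proposition~\ref{cg-mp2}), both carried out in Lemma~\ref{mu-g}. I do not anticipate any genuinely new difficulty in the theorem itself; the only point requiring care is the verification that the ``existence'' lemmas of Section~\ref{sec3} transfer verbatim to the ${\bf H}_2$ setting, which is routine precisely because ${\bf H}_2$ \ref{h21}--\ref{h23} are ${\bf H}_1$ \ref{h11}--\ref{h13}.
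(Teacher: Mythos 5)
Your proposal is correct and follows essentially the same route as the paper's own proof: both extract $u_\pm$ from Lemmas \ref{mu-c}--\ref{mu-mp}, use Lemma \ref{mu-g} to kill the degree-$k$ critical groups at $0,u_+,u_-$ (where $k\ge 2$ is decisive), invoke the Lemma \ref{ex-inf} computation $C_k(\varphi,\infty)\neq 0$ under ${\bf H}_2$, and conclude via Proposition \ref{mr} \ref{mr2}. No gaps; the observation that ${\bf H}_2$ \ref{h21}--\ref{h23} subsume ${\bf H}_1$ \ref{h11}--\ref{h13} is exactly the justification the paper relies on.
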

\begin{proof}
Clearly we may assume that $K(\varphi)$ is finite. From Lemma \ref{mu-g} we already know that $0,u_+,u_-\in K(\varphi)$ and that, in particular,
\beq\label{mu1}
C_k(\varphi,0)=C_k(\varphi,u_+)=C_k(\varphi,u_-)=0
\eeq
(recall that $k\ge 2$ in ${\bf H}_2$ \ref{h23}). Besides, reasoning as in Lemma \ref{ex-inf} (whose hypotheses are all included in ${\bf H}_2$), we see that $C_k(\varphi,\infty)\neq 0$. Now Proposition \ref{mr} \ref{mr2} ensures the existence of $\tilde u\in K(\varphi)$ s.t.\ $C_k(\varphi,\tilde u)\neq 0$, which by \eqref{mu1} implies $\tilde u\notin\{0,u_+,u_-\}$.
\vskip2pt
\noindent
Finally, Lemma \ref{mu-mp} and Proposition \ref{reg} yield $u_+,u_-,\tilde u\in C^\alpha_\delta(\overline\Omega)$, $u_-(x)<0<u_+(x)$ for all $x\in\Omega$, and all three are solutions of \eqref{dir}.
\end{proof}

\noindent
{\small {\bf Aknowledgement.} A.\ Iannizzotto is a member of the Gruppo Nazionale per l'Analisi Matematica, la Probabilit\`a e le loro Applicazioni (GNAMPA) of the Istituto Nazionale di Alta Matematica (INdAM) and is partially supported by the GNAMPA research project {\em Regolarit\`a, esistenza e propriet\`a geometriche per le soluzioni di equazioni con operatori frazionari non lineari}. This work was partially performed during a visit of N.S.\ Papageorgiou at the University of Cagliari, funded by the Visiting Professor Programme.}

\bigskip

\end{document}